\theoremstyle{plain} 
\newtheorem{theorem}{Theorem}[section]
\newtheorem{corollary}[theorem]{Corollary}
\newtheorem{lemma}[theorem]{Lemma}
\newtheorem{proposition}[theorem]{Proposition}
\theoremstyle{definition} 
\newtheorem{definition}[theorem]{Definition}
\theoremstyle{definition} 
\theoremstyle{remark} 
\theoremstyle{remark} 
\newtheorem{remark}[theorem]{Remark}
\newtheorem*{remark*}{Remark}
\numberwithin{equation}{section}
\newcommand{\Det}{\operatorname{det}}
\newcommand{\res}{\operatorname{res}}
\newcommand{\orl}{\operatorname{or}}
\newcommand{\bd}{\operatorname{bd}}
\newcommand{\inter}{\operatorname{int}}
\newcommand{\pd}[1]{\partial_{#1}}
\newcommand{\intr}[2]{\overline{#1,#2}}
\newcommand{\p}{\bar P}
\renewcommand{\tt}[1]{\texttt{#1}}
\newcommand{\supp}{\operatorname{supp}}
\newcommand{\dd}{\mathrm{d}}
\renewcommand{\le}{\leqslant}
\renewcommand{\ge}{\geqslant}
\renewcommand{\leq}{\leqslant}
\newcommand{\m}{\mathbf{m}}
\newcommand{\s}{\mathbf{s}}
\renewcommand{\S}{}
\newcommand{\al}{\alpha}
\newcommand{\be}{\beta}
\newcommand{\ka}{\kappa}
\newcommand{\ga}{\gamma}
\newcommand{\la}{\lambda}
\newcommand{\de}{\delta}
\newcommand{\De}{\Delta}
\newcommand{\vpi}{\varphi}
\renewcommand{\Psi}{\overline{\Phi}}
\newcommand{\Si}{\Sigma}
\newcommand{\Th}{\Theta}
\newcommand{\om}{\omega}
\newcommand{\Om}{\Omega}
\newcommand{\tD}{\tilde D}
\newcommand{\tP}{\tilde P}
\newcommand{\EE}{\mathcal{E}}
\newcommand{\F}{\mathcal{F}}
\newcommand{\SSS}{\mathcal{S}}
\newcommand{\ii}[1]{\boldsymbol{I}\!\left\{#1\right\}}
\renewcommand{\ii}[1]{\operatorname{\mathrm{I}}\!\left\{#1\right\}}
\renewcommand{\P}{\operatorname{\mathsf{P}}} 
\newcommand{\E}{\operatorname{\mathsf{E}}}
\newcommand{\Z}{\mathbb{Z}}
\newcommand{\R}{\mathbb{R}}
\newcommand{\A}{\mathcal{A}}
\newcommand{\vp}{\varepsilon}
\begin{document}


\begin{frontmatter}

\title{Optimal binomial, Poisson, and normal left-tail domination for sums of nonnegative random variables}

\begin{aug}
\author{\fnms{Iosif} \snm{Pinelis}\ead[label=e1]{ipinelis@math.mtu.edu}}
\runauthor{Iosif Pinelis}


\address{Department of Mathematical Sciences\\
Michigan Technological University\\
Houghton, Michigan 49931, USA\\
E-mail: \printead[ipinelis@mtu.edu]{e1}}
\end{aug}






\begin{abstract}
Let $X_1,\dots,X_n$ be independent nonnegative random variables (r.v.'s), with $S_n:=X_1+\dots+X_n$ and finite values of $s_i:=\operatorname{\mathsf{E}} X_i^2$ and $m_i:=\operatorname{\mathsf{E}} X_i>0$.  
Exact upper bounds on $\operatorname{\mathsf{E}} f(S_n)$  
for all functions $f$ in a certain class $\F$ of \emph{nonincreasing} functions are obtained, in each of the following settings: (i) $n,
m_1,\dots,m_n,s_1,\dots,s_n$ are fixed; (ii)~$n$, $m:=
m_1+\dots+m_n$, and $s:=
s_1+\dots+s_n$ are fixed; (iii)~only $m$ and $s$ are fixed. 
These upper bounds are of the form $\operatorname{\mathsf{E}} f(\eta)$ for a certain r.v.\ $\eta$. 
The r.v.\ $\eta$ and the class $\F$ depend on the choice of one of the three settings. In particular, $(m/s)\eta$ has the binomial distribution with parameters $n$ and $p:=m^2/(ns)$ in setting (ii) and the Poisson distribution with parameter $\la:=m^2/s$ in setting (iii). One can also let $\eta$ have the normal distribution with mean $m$ and variance $s$ in any of these three settings. In each of the settings, the class $\F$ contains, and is much wider than, the class of all decreasing exponential functions. 
As corollaries of these results, optimal in a certain sense upper bounds on the left-tail probabilities $\operatorname{\mathsf{P}}(S_n\le x)$ are presented, for any real $x$. 
In fact, more general settings than the ones described above are considered. 
Exact upper bounds on the exponential moments $\operatorname{\mathsf{E}}\exp\{hS_n\}$ for $h<0$, as well as the corresponding exponential bounds on the left-tail probabilities, were previously obtained by Pinelis and Utev. It is shown that the new bounds on the tails are substantially better. 
\end{abstract}

\begin{keyword}[class=AMS]
\kwd[Primary ]{60E15}
\kwd[; secondary ]{60G42}
\kwd{60G48}
\end{keyword}


\begin{keyword}
\kwd{probability inequalities}
\kwd{sums of random variables}
\kwd{submartingales}
\kwd{martingales}
\kwd{upper bounds}
\kwd{generalized moments}
\end{keyword}





\end{frontmatter}

\settocdepth{chapter}

\tableofcontents 

\settocdepth{subsubsection}


\section{Introduction}\label{intro}
Let $X_1,\dots,X_n$ be independent real-valued random variables (r.v.'s), 
with 
\begin{equation*}
	S_n:=X_1+\dots+X_n. 
\end{equation*} 
Exponential upper bounds for $S_n$ go back at least to Bernstein. 
As the starting point here, one uses the multiplicative property of the exponential function together with the condition of independence of $X_1,\dots,X_n$ to write 
\begin{equation}\label{eq:factor}
	\E e^{hS_n}=\prod_1^n\E e^{hX_i}
\end{equation}
for all real $h$. 
Then one  
bounds up each factor $\E e^{hX_i}$, thus obtaining an upper bound (say $M_n(h)$) on $\E e^{hS_n}$, uses the Markov inequality to write $\P(S_n\ge x)\le e^{-hx}\E e^{hS_n}\le B_n(h,x):=e^{-hx}M_n(h)$ for all real $x$ and all nonnegative real $h$, and finally tries to minimize $B_n(h,x)$ in $h\ge0$ to obtain an upper bound on the tail probability $\P(S_n\ge x)$.  
  
This approach was used and further developed in a large number of papers, including notably the well-known work by Bennett \cite{bennett} and Hoeffding \cite{hoeff63}. 
Pinelis and Utev \cite{pin-utev-exp} offered a general approach to obtaining exact bounds on the exponential moments $\E e^{hS_n}$, with a number of particular applications. 

Exponential bounds were obtained in more general settings as well, where the r.v.'s $X_1,\dots,X_n$ do not have to be independent or real-valued. It was already mentioned by Hoeffding at the end of Section~2 in \cite{hoeff63} that his results remain valid for martingales. Exponential inequalities with optimality properties for vector-valued $X_1,\dots,X_n$ were obtained e.g.\ in \cite{pin-sakh,pin94} and then used in a large number of papers. 

Related to this is work on Rosenthal-type and von Bahr--Esseen-type bounds, that is, bounds on absolute power moments $\E|S_n|^p$ of $S_n$; see e.g.\ \cite{bahr65,rosenthal,pin94,latala-moments,bouch-etal,ibr-shar-pos,osek10,tyurinSPL,p=3_publ,rosenthal_AOP,bahr-esseen-AFA}. 

However, the classes of exponential functions $e^{h\cdot}$ and absolute power functions $|\cdot|^p$ are too narrow in that the resulting bounds on the tails are not as good as one could get in certain settings. It is therefore natural to try to consider wider classes of moment functions and then try to choose the best moment function in such a wider class to obtain a better bound on the tail probability. This approach was used and developed in \cite{eaton1,eaton2,T2,pin98,bent-ap,pin-hoeff-published}, in particular. 
The main difficulty one needs to overcome working with such, not necessarily exponential, moment functions is the lack of multiplicative property \eqref{eq:factor}.  

In some settings, the bounds can be improved if it is known that the r.v.'s $X_1,\dots,X_n$ are nonnegative; see e.g.\ \cite{latala-moments,bouch-etal,ibr-shar-pos,osek10}. However, in such settings the focus has usually been on bounds for the right tail of the distribution of $S_n$. 
There has been comparatively little work done concerning the left tail of the distribution of the sum $S_n$ of nonnegative r.v.'s $X_1,\dots,X_n$. 

%
%
%
%

One such result was obtained in \cite{pin-utev-exp}. 
Suppose indeed that the independent r.v.'s $X_1,\dots,X_n$ are nonnegative. 
Also, suppose here that 
\begin{equation}\label{eq:m and s}
\text{$m:=\E X_1+\dots+\E X_n>0$\quad and\quad $s:=\E X_1^2+\dots+\E X_n^2<\infty$.}	
\end{equation}
Then \cite[Theorem~7]{pin-utev-exp} for any $x\in(0,m]$ 
\begin{equation}\label{eq:pin-utev1}
	\P(S_n\le x)\le\exp\Big\{-\frac{m^2}s\,\Big(1+\frac xm\,\ln\frac x{em}\Big)\Big\}
	\le\exp\Big\{-\frac{(x-m)^2}{2s}\Big\}   
\end{equation}
(in fact, these inequalities were stated in \cite{pin-utev-exp} in the equivalent form for the non-positive r.v.'s $-X_1,\dots,-X_n$). 
These upper bounds on the tail probability $\P(S_n\le x)$ were 
based on exact upper bounds on the exponential moments of the sum $S_n$, which can be written as follows: 
\begin{equation}\label{eq:pin-utev2}
	\E\exp\{hS_n\}\le\E\exp\big\{h\tfrac sm\Pi_{m^2/s}\big\}\le\E\exp\big\{h\big(m+Z\sqrt s\,\big)\big\}
\end{equation}
for all real $h\le0$. 
Here and subsequently, 
for any $\la\in(0,\infty)$, let $\Pi_\la$ and $Z$ stand for any r.v.\ having the Poisson distribution with parameter $\la\in(0,\infty)$ and for any standard normal r.v., respectively. 
The bounds in \eqref{eq:pin-utev1} and \eqref{eq:pin-utev2} have certain optimality properties, and they are very simple in form. Yet, they have apparently been little known; in particular, the last bound in \eqref{eq:pin-utev1} was rediscovered in \cite{maurer}. 

In the present paper, the ``Poisson'' and ``normal'' bounds in \eqref{eq:pin-utev2} will be extended to a class of moment functions much wider than the ``exponential'' class (still with the preservation of the optimality property, for each moment function in the wider class). Consequently, the bounds in \eqref{eq:pin-utev1} will be much improved. 
We shall also provide ``binomial'' upper bounds on the moments and tail probabilities of $S_n$, which are further improvements of the corresponding ``Poisson'', and hence ``normal'', bounds.  

\section{Summary and discussion}\label{results}

Let $X_1,\dots,X_n$ be nonnegative real-valued r.v.'s. In general, we shall no longer assume that $X_1,\dots,X_n$ are independent; instead, a more general condition, described in the definition below, will be assumed. Moreover, the condition \eqref{eq:m and s} will be replaced by a more general one. 

\begin{definition}\label{def:SS}
Given any $\m=(m_1,\dots,m_n)$ and $\s=(s_1,\dots,s_n)$ in $[0,\infty)^n$, let us say that the r.v.'s $X_1,\dots,X_n$ satisfy the $\S(\m,\s)$-condition if, for some filter $(\A_0,\dots,\A_{n-1})$ of sigma-algebras and each $i\in\intr1n$, the r.v.\ $X_i$ is $\A_i$-measurable, 
\begin{equation}\label{eq:>m_i,<s_i}
	\E(X_i|\A_{i-1})\ge m_i,\quad\text{and}\quad\E(X_i^2|\A_{i-1})\le s_i. 
\end{equation}
Given any nonnegative $m$ and $s$, 
let us also say that the $\S(m,s)$-condition is satisfied if the $\S(\m,\s)$-condition holds for some $\m=(m_1,\dots,m_n)$ and $\s=(s_1,\dots,s_n)$ in $[0,\infty)^n$ such that 
\begin{equation}\label{eq:>m,<s}
	m_1+\dots+m_n\ge m\quad\text{and}\quad s_1+\dots+s_n\le s. 
\end{equation}
\end{definition}

In the above definition and in what follows, for any $\al$ and $\be$ in $\Z\cup\{\infty\}$, we let $\intr\al\be:=\{j\in\Z\colon\al\le j\le\be\}$.  

The following comments are in order. 

\begin{itemize}
	\item
Any independent r.v.'s $X_1,\dots,X_n$ satisfy the $\S(\m,\s)$-condition if $\E X_i\ge m_i$ and $\E X_i^2\le s_i$ for each $i\in\intr1n$; if at that \eqref{eq:>m,<s} holds, 
then the $\S(m,s)$-condition holds as well. 
	\item 
If r.v.'s $X_1,\dots,X_n$ satisfy the $\S(\m,\s)$-condition, then the r.v.'s $X_1-m_1,\dots,\break 
X_n-m_n$ are submartingale-differences, with respect to the corresponding filter $(\A_0,\dots,\A_{n-1})$.  
	\item If, for some $\m$ and $\s$ in $[0,\infty)^n$, the $\S(\m,\s)$-condition is satisfied by some 
r.v.'s $X_1,\dots,X_n$, then necessarily 
\begin{equation}\label{eq:s>m^2}
	s_i\ge m_i^2\quad\text{for all }i\in\intr1n. 
\end{equation}
Moreover, if, for some nonnegative $m$ and $s$, the $\S(m,s)$-condition is satisfied by some 
r.v.'s $X_1,\dots,X_n$, then necessarily 
\begin{equation}\label{eq:s>m^2/n}
	\tfrac sn\ge\big(\tfrac mn\big)^2 \quad\text{or, equivalently,}\quad n\ge\tfrac{m^2}s.  
\end{equation}
\end{itemize}

\begin{definition}\label{def:Y} 
Given any real numbers $m$ and $s$ such that $m>0$ and $s\ge m^2$ (cf.\ \eqref{eq:s>m^2}),  
let $Y^{m,s}$ stand for any r.v.\ such that 
\begin{equation*}
	\E Y^{m,s}=m,\quad\E(Y^{m,s})^2=s,\quad\text{and}\quad\P(Y^{m,s}\in\{0,\tfrac sm\})=1; 
\end{equation*}
such a r.v.\ $Y^{m,s}$ exists, and its distribution is uniquely determined: 
\begin{equation*}
	\P(Y^{m,s}=\tfrac sm)=1-\P(Y^{m,s}=0)=\tfrac{m^2}s; 
\end{equation*}  
moreover, let $Y_1^{m,s},\dots,Y_n^{m,s}$ denote independent copies of a r.v.\ $Y^{m,s}$.  
Also, given any $\m$ and $\s$ in $(0,\infty)^n$ such that the condition \eqref{eq:s>m^2} holds, we shall always assume the corresponding r.v.'s $Y^{m_1,s_1},\dots,Y^{m_n,s_n}$ to be independent. 
\end{definition}


\bigskip

Next, let us describe the pertinent classes of generalized moment functions. 
For any natural $j$, let $\SSS^j$ denote the class of all 
$(j-1)$-times differentiable functions $g\colon\R\to\R$ such that the $(j-1)$th derivative  
$g^{(j-1)}$ of $g$ has a right-continuous right derivative, which will be denoted here simply by $g^{(j)}$. 
As usual, we let $g^{(0)}:=g$.
Take then any natural 
$$k\le j+1$$ 
and introduce the class of functions  
\begin{equation}\label{eq:F}
	\F_+^{k:j}:=
	\big\{g\in\SSS^j\colon\text{$g^{(i)}$ is nondecreasing for each $i\in\intr{k-1}j$\,}\big\}
\end{equation}
and, finally, the ``reflected'' class 
\begin{equation}\label{eq:F_-}
	\F_-^{k:j}:=\{g^-\colon g\in\F_+^{k:j}\}, 
\end{equation}
where $g^-(x):=g(-x)$ for all $x\in\R$. 
%
It is clear that the class $\F_-^{k:j}$ gets narrower as $j$ increases (with a fixed $k$), and it gets wider as $k$ increases (with a fixed $j$). 

As an example, the 
function 
$x\mapsto a+b\,x+c\,e^{-\la x}$ belongs to $\F_-^{k:j}$ 
for any $a\in\R$, $b\le0$, $c\ge0$, $\la\ge0$ (and any natural $k$ and $j$ such that $k\le j+1$).  
Also, 
given any $a\in\R$, $b\le0$, $c\ge0$, and $w\in\R$, the 
function $x\mapsto a+b\,x+c\,(w-x)_+^\al$ 
belongs to $\F_-^{k:j}$ for any real $\al\ge k$ (and any natural $k$ and $j$ such that $k\le j+1$);  
here and elsewhere, as usual,  
$x_+:=\max(0,x)$ and $x_+^\al:=(x_+)^\al$ for $x\in\R$. 
Note also that the classes $\F_-^{k:j}$ are convex cones; that is, any linear combination with nonnegative coefficients of functions belonging to any one of these classes belongs to the same class. 
%
\begin{remark}\label{rem:shift}
It 
is not difficult to see that, 
if a function $f$ is in the class $\F_-^{k:j}$, 
then the shifted and/or rescaled function 
$x\mapsto f(bx+a)$ is also in the same class, for any constants $a\in\R$ and $b\ge0$. 
That is, these classes of functions are shift- and scale-invariant. 
\end{remark}






\bigskip

Now we are ready to state the main result of this paper. 

\begin{theorem}\label{th:}\ 
\begin{enumerate}[(I)]
	\item 
	Let $X_1,\dots,X_n$ be any \emph{nonnegative} r.v.'s satisfying the $\S(\m,\s)$-condition for some $\m$ and $\s$ in $(0,\infty)^n$, so that \eqref{eq:s>m^2} holds. Then 
\begin{equation}\label{eq:th(I)}
	\E f(S_n)\le\E f\big(Y^{m_1,s_1}+\dots+Y^{m_n,s_n}\big) 
\end{equation}
for all $f\in\F_-^{1:2}$. 
	\item 
	Let $X_1,\dots,X_n$ be any \emph{nonnegative} r.v.'s satisfying the $\S(m,s)$-condition for some $m$ and $s$ in $(0,\infty)$, so that \eqref{eq:s>m^2/n} holds. 
	Then 
\begin{align}
	\E f(S_n)&\le\E f\big(Y_1^{\frac mn,\frac sn}+\dots+Y_n^{\frac mn,\frac sn}\big) \label{eq:th(II,1)}\\ 
	&\le\E f\big(\tfrac sm\Pi_{m^2/s}\big) \label{eq:th(II,2)}\\ 
	&\le\E f\big(m+Z\sqrt s\,\big)  \label{eq:Z,3}
\end{align}
for all $f\in\F_-^{1:3}$; 
in fact, \eqref{eq:Z,3} and the 
inequality 
\begin{equation}\label{eq:Z,2}
		\E f(S_n)\le\E f\big(m+Z\sqrt s\,\big) 
\end{equation}
both hold for all $f\in\F_-^{1:2}$.  
\end{enumerate}
\end{theorem}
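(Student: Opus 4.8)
\medskip
\noindent\textit{Proof strategy.}\ The plan is to prove Part~(I) by a telescoping argument that reduces everything to a one‑dimensional extremal inequality, and then to derive Part~(II) from Part~(I) through a chain of comparisons passing through two‑point, binomial, Poisson, and normal laws.

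\emph{Part~(I).} A function $f$ belongs to $\F_-^{1:2}$ exactly when it is nonincreasing and convex with $f''$ nonincreasing, and $\F_-^{1:2}$ is a convex cone closed under $f\mapsto\big(t\mapsto\E f(t+Y)\big)$ for an independent nonnegative r.v.\ $Y$ (differentiate under the expectation and use the shift‑invariance of Remark~\ref{rem:shift}). So one may replace $X_n,\dots,X_1$ by $Y^{m_n,s_n},\dots,Y^{m_1,s_1}$ one at a time; at the step replacing $X_i$, conditioning on $\A_{i-1}$ (so the partial sum formed so far is $\A_{i-1}$‑measurable while $Y^{m_i,s_i}$ is independent of $\A_{i-1}$) reduces that step to the following one‑dimensional fact and its verbatim conditional version: \emph{if $m>0$, $s\ge m^2$, $\phi\in\F_-^{1:2}$, and $X\ge0$ has $\E X\ge m$ and $\E X^2\le s$, then $\E\phi(X)\le\E\phi(Y^{m,s})$.}

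To prove this one‑dimensional fact, set $c:=s/m$ and
\begin{equation*}
	q(x):=\phi(c)+\phi'(c)(x-c)+\ga\,(x-c)^2,\qquad
	\ga:=\tfrac1{c^2}\big(\phi(0)-\phi(c)+c\,\phi'(c)\big),
\end{equation*}
a quadratic $\al+\be x+\ga x^2$. Then: (a)~$q(0)=\phi(0)$ and $q(c)=\phi(c)$, so $\E q(Y^{m,s})=\E\phi(Y^{m,s})$ since $Y^{m,s}$ is supported on $\{0,c\}$; (b)~convexity of $\phi$ gives $\ga\ge0$, and convexity together with monotonicity of $\phi$ gives $\be=\phi'(c)-2\ga c\le0$, whence $\E q(X)=\al+\be\E X+\ga\E X^2\le\al+\be m+\ga s=\E q(Y^{m,s})$; (c)~$q\ge\phi$ on $[0,\infty)$: with $h:=q-\phi$ one has $h(0)=h(c)=h'(c)=0$, and $h'$ is convex because $\phi''$ is nonincreasing, so, $h'$ having a zero at $c$ and — by Rolle on $[0,c]$ — a zero at some $\xi\in(0,c)$, one gets $h'\ge0$ on $[0,\xi]\cup[c,\infty)$ and $h'\le0$ on $[\xi,c]$; hence $h$ is nondecreasing, then nonincreasing, then nondecreasing, which with $h(0)=h(c)=0$ forces $h\ge0$ on $[0,\infty)$. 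Combining (a)--(c), $\E\phi(X)\le\E q(X)\le\E q(Y^{m,s})=\E\phi(Y^{m,s})$; iterating the replacement gives~\eqref{eq:th(I)}.

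\emph{Part~(II).} The $\S(m,s)$‑condition provides an $\S(\m,\s)$‑structure, so Part~(I) gives $\E f(S_n)\le\E f\big(\sum_i Y^{m_i,s_i}\big)$ for $f\in\F_-^{1:2}$, and it remains to compare this with the binomial, Poisson, and normal targets. One first notes (a short corollary of the same majorant device) that $(\mu,\si)\mapsto\E f(t+Y^{\mu,\si})$ is nonincreasing in $\mu$ and nondecreasing in $\si$, so one may take $\sum_i m_i=m$, $\sum_i s_i=s$; then one shows the equal split is worst, $\E f\big(\sum_i Y^{m_i,s_i}\big)\le\E f\big(\sum_i Y_i^{m/n,s/n}\big)$, by repeatedly balancing a pair $(m_i,s_i),(m_j,s_j)$ toward its average, each balancing being a two‑variable instance of the same circle of ideas — and it is here that the stronger class $\F_-^{1:3}$ (a three‑point, cubic majorant) is required. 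Next, $\sum_i Y_i^{m/n,s/n}=\tfrac sm B$ with $B\sim\mathrm{Bin}\big(n,\tfrac{m^2}{ns}\big)$ while $\tfrac sm\Pi_{m^2/s}=\lim_N\tfrac sm\mathrm{Bin}\big(N,\tfrac{m^2}{Ns}\big)$, and since a Bernoulli$(p)$ r.v.\ is dominated in the convex order by a sum of two independent Bernoulli$(p/2)$ r.v.'s, $N\mapsto\E f\big(\tfrac sm\mathrm{Bin}(N,\tfrac{m^2}{Ns})\big)$ is nondecreasing for every convex $f$ — in particular for $f\in\F_-^{1:3}$ — which yields the Poisson bound. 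Finally one establishes $\E f\big(\tfrac sm\Pi_{m^2/s}\big)\le\E f\big(m+Z\sqrt s\,\big)$ for $f\in\F_-^{1:3}$ and, with only $f\in\F_-^{1:2}$, the direct bound $\E f(S_n)\le\E f\big(m+Z\sqrt s\,\big)$; since this Poisson‑to‑normal step is not a convex‑order statement (e.g.\ $\E(\Pi_\la^4)>\E(N(\la,\la)^4)$), it is handled separately — for instance by a Lindeberg‑type replacement writing $\Pi_\la=\sum_{i=1}^k\Pi_{\la/k}$ and $N(\la,\la)$ as a limit of sums of $k$ negligible i.i.d.\ pieces, the third‑order error terms being controlled by $f'''\le0$ and $f'''$ nondecreasing.

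\emph{Main obstacle.} The one‑dimensional lemma is the conceptual heart but is essentially self‑contained; the genuine difficulties are in Part~(II): proving that the equal i.i.d.\ split is extremal — which is what forces the passage from $\F_-^{1:2}$ to $\F_-^{1:3}$ and needs a cubic/three‑point version of the majorant argument plus careful bookkeeping of the balancing — and the Poisson‑to‑normal comparison, which lies genuinely beyond the convex order.
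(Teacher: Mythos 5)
Your Part (I) is sound and is essentially the paper's own argument (a quadratic majorant of $f$ touching the two-point law $Y^{m,s}$ at $0$ and $s/m$, tensorized by conditioning on $\A_{i-1}$), and your binomial-to-Poisson step via convex-order splitting of Bernoullis is a legitimate, slightly different route from the paper's (which instead uses \eqref{eq:th(II,1)} with $X_{n+1}=0$ to get monotonicity in $n$ and then passes to the limit). The first genuine gap is the ``equal split is worst'' step, $\E f\big(\sum_i Y^{m_i,s_i}\big)\le\E f\big(\sum_i Y_i^{m/n,s/n}\big)$ for $f\in\F_-^{1:3}$. You assert that each pairwise balancing is ``a two-variable instance of the same circle of ideas'' via a cubic/three-point majorant, but after the reduction to $f=f_{w,3}$ this balancing inequality is a hard four-parameter problem: in the paper it is Lemma~\ref{lem:schur}, whose proof reduces to the nonnegativity of the polynomials $D_0,\dots,D_3$ in $(a,b,p,q)$ (Lemma~\ref{lem:G'(0)>0}) and is carried out only by quantifier elimination (Mathematica \texttt{Reduce}, Redlog) or by the long resultant-and-calculus case analysis of Sections~\ref{alt1}--\ref{alt2}; no short majorant argument is known, and you supply none. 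Moreover, even granting pairwise balancing, passing to the equal split is not automatic: the parameters $P_i=(m_i,s_i)$ are points of $\R^2$, the Muirhead/$T$-transform argument fails there (the paper gives an explicit counterexample), and one must either use the paper's special induction or justify an infinite iteration of midpoint averagings together with continuity of $(P_1,\dots,P_n)\mapsto\E f_{w,3}\big(\sum_i Y^{m_i,s_i}\big)$; your ``repeated balancing'' leaves this unaddressed.

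The second gap concerns the normal bounds \eqref{eq:Z,2} and \eqref{eq:Z,3}, which the theorem asserts for all $f\in\F_-^{1:2}$. Your only proposed tool for the Poisson-to-normal comparison --- a Lindeberg replacement with third-order error terms controlled by $f'''$ --- is unavailable on $\F_-^{1:2}$ (such $f$, e.g.\ $f_{w,2}$, need not have a third derivative), and even on $\F_-^{1:3}$ you do not actually bound the error terms. More importantly, you give no argument at all for \eqref{eq:Z,2}: after reducing to $n=1$ and $f=f_{w,2}$ one must show $\E\big(w-Y^{m,s}\big)_+^2\le\E\big(w-m-Z\sqrt s\,\big)_+^2$, a comparison of the two-point law with a normal law that is not nonnegative, so the majorant-on-$[0,\infty)$ device of Part (I) does not apply; in the paper this is the inequality $\de(m,k)\ge0$, proved by a delicate sign-change analysis of $\de'$ together with a normal-tail inequality, and \eqref{eq:Z,3} for $\F_-^{1:2}$ is then deduced by applying \eqref{eq:Z,2} to sums of scaled Poisson variables --- a step your outline also omits. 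As written, the proposal establishes Part (I) but not Part (II).
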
 

The necessary proofs will be given in Section~\ref{proofs}. 

\begin{remark}\label{rem:conds}
Under the corresponding conditions given in Theorem~\ref{th:}, 
the expected values in inequalities \eqref{eq:th(I)}--\eqref{eq:Z,2} exist (in $\R$ or, at least, in $(-\infty,\infty]$),  according to \cite[Proposition 5.2, part~(i)]{cones}. 
Moreover, the conditions for \eqref{eq:th(I)}--\eqref{eq:Z,2} in Theorem~\ref{th:} can be supplemented or relaxed as follows. 
To describe these extended or relaxed conditions for \eqref{eq:th(I)}--\eqref{eq:Z,2}, introduce the conditions of equalities in 
\eqref{eq:>m_i,<s_i} and/or \eqref{eq:>m,<s}: 
\begin{align}
	\E(X_i|\A_{i-1})&=m_i\quad\text{for all }i, \label{eq:=m_i} \\ 
	\E(X_i^2|\A_{i-1})&=s_i\quad\text{for all }i, \label{eq:=s_i} \\ 
	m_1+\dots+m_n&=m, \label{eq:=m} \\ 
		s_1+\dots+s_n&=s \label{eq:=s}
\end{align}
and also conditions 
\begin{gather}
	\text{the $X_i$'s are bounded or $f\ge p$ for some quadratic polynomial $p$}, \label{eq:except} \\ 
	\E X_i^3<\infty\ \text{for all }i. \label{eq:E X_i^3}
\end{gather}

Then 
\begin{enumerate}[(I)]
	\item inequalities \eqref{eq:th(I)} and \eqref{eq:Z,2} hold if any one of the following two conditions holds: 
	\begin{enumerate}[(i)]
	\item \eqref{eq:=m_i} and $f\in\F_-^{2:2}$; 
	\item \eqref{eq:=m_i}, \eqref{eq:=s_i}, \eqref{eq:except}, and $f\in\F_-^{3:2}$.  
\end{enumerate}
	\item inequality \eqref{eq:th(II,1)} holds if any one of the following three conditions holds: 
	\begin{enumerate}[(i)]
	\item \eqref{eq:=m_i} and $f\in\F_-^{2:3}$; 
		\item \eqref{eq:=m_i}, \eqref{eq:=s_i}, \eqref{eq:except}, and $f\in\F_-^{3:3}$;
	\item \eqref{eq:=m_i}, \eqref{eq:=s_i}, \eqref{eq:E X_i^3}, and $f\in\F_-^{4:3}$.  
\end{enumerate}
	\item inequality \eqref{eq:th(II,2)} holds if any one of the following three conditions holds: 
	\begin{enumerate}[(i)]
	\item $f\in\F_-^{2:3}$; 
		\item \eqref{eq:except} and $f\in\F_-^{3:3}$;
	\item \eqref{eq:E X_i^3} and $f\in\F_-^{4:3}$.  
\end{enumerate}
	\item inequality \eqref{eq:Z,3} holds if any one of the following two conditions holds: 
	\begin{enumerate}[(i)]
	\item $f\in\F_-^{2:2}$; 
	\item \eqref{eq:except} and $f\in\F_-^{3:2}$.  
\end{enumerate}
\end{enumerate}
This remark can be verified similarly to Theorem~\ref{th:}. 
\end{remark}

Obviously, the r.v.'s $Y^{m_1,s_1},\dots,Y^{m_n,s_n}$ in \eqref{eq:th(I)} satisfy the $\S(\m,\s)$-condition. So, inequality \eqref{eq:th(I)} is exact, in the sense that, given any natural $n$ and any $\m$ and $\s$ in $(0,\infty)^n$ such that \eqref{eq:s>m^2} holds, the right-hand side of \eqref{eq:th(I)} is the exact upper bound on its left-hand side. 
Similarly, given any natural $n$ and any $m$ and $s$ in $(0,\infty)$ such that \eqref{eq:s>m^2/n} holds, inequality \eqref{eq:th(II,1)} is exact. 

\begin{proposition}\label{prop:exact}\ 
Given any $m$ and $s$ in $(0,\infty)$, the Poisson upper bound in \eqref{eq:th(II,2)} on $\E f(S_n)$ is exact \big(in this case $n$ is not fixed, having only to satisfy \eqref{eq:s>m^2/n}\big). 
\end{proposition}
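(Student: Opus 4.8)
The upper bound in \eqref{eq:th(II,2)} is already provided by Theorem~\ref{th:}(II), so what remains to be shown is its \emph{exactness}: that, for any $m,s\in(0,\infty)$ and any $f\in\F_-^{1:3}$ (and, more generally, any $f$ for which \eqref{eq:th(II,2)} is valid), the value $\E f\big(\tfrac sm\Pi_{m^2/s}\big)$ equals the supremum of $\E f(S_n)$ over all natural $n$ satisfying \eqref{eq:s>m^2/n} and all nonnegative r.v.'s $X_1,\dots,X_n$ obeying the $\S(m,s)$-condition. The plan is to exhibit one explicit sequence of admissible configurations along which $\E f(S_n)$ converges to $\E f\big(\tfrac sm\Pi_{m^2/s}\big)$.

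For each natural $n\ge m^2/s$, I would take the independent r.v.'s $X_i:=Y_i^{m/n,\,s/n}$, $i\in\intr1n$. By Definition~\ref{def:Y} these exist precisely when $n\ge m^2/s$, i.e.\ under \eqref{eq:s>m^2/n}, and they satisfy the $\S(m,s)$-condition with equalities throughout \eqref{eq:>m_i,<s_i} and \eqref{eq:>m,<s}; being bounded, they are admissible also under the relaxed hypotheses of Remark~\ref{rem:conds}(III). From the description of the distribution of $Y^{m/n,s/n}$ it follows that $S_n:=X_1+\dots+X_n$ has the same distribution as $\tfrac sm\,B_n$, where $B_n$ is a binomial r.v.\ with parameters $n$ and $p_n:=m^2/(ns)$. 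Since $n p_n=m^2/s$ does not depend on $n$, the classical Poisson limit theorem gives $B_n\overset{\mathrm d}{\longrightarrow}\Pi_{m^2/s}$ and hence $S_n\overset{\mathrm d}{\longrightarrow}W:=\tfrac sm\,\Pi_{m^2/s}$.

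The one nontrivial step — which I expect to be the only real obstacle — is to upgrade this weak convergence to convergence of the generalized moments, $\E f(S_n)\to\E f(W)$; this is delicate precisely because an $f\in\F_-^{1:3}$ need be neither bounded nor bounded below. Here I would use that $f\in\F_-^{1:3}\subseteq\F_-^{1:2}$ is nonincreasing and convex: letting $\ell(x):=f(0)+f'(0)x$ be its tangent line at $0$ and setting $r:=f-\ell$, convexity gives $r\ge0$ on $\R$, while $0\le r'(x)=f'(x)-f'(0)\le-f'(0)$ for $x\ge0$ shows that $r$ is continuous, nonnegative and of at most linear growth on $[0,\infty)$, so $\E r(W)\le-f'(0)\,\E W<\infty$. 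On one hand, $\E S_n=\sum_{i=1}^n m/n=m=\tfrac sm\cdot\tfrac{m^2}s=\E W$, so $\E\ell(S_n)=\E\ell(W)$ \emph{exactly}, for every $n$. On the other hand, $r\ge0$ is continuous and $S_n\overset{\mathrm d}{\longrightarrow}W$, so the portmanteau theorem yields $\liminf_n\E r(S_n)\ge\E r(W)$. Adding, $\liminf_n\E f(S_n)\ge\E\ell(W)+\E r(W)=\E f(W)$; together with $\E f(S_n)\le\E f(W)$ from Theorem~\ref{th:}(II) this forces $\E f(S_n)\to\E f(W)$, whence $\sup_n\E f(S_n)=\E f(W)$, which is the asserted exactness. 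The remaining cases — those of Remark~\ref{rem:conds}(III), where \eqref{eq:th(II,2)} holds for wider classes under extra moment assumptions — are handled by the same scheme, using in addition that all moments of $B_n$, and its moment generating function, converge to those of $\Pi_{m^2/s}$ (again because $n p_n=m^2/s$ is $n$-free), which, combined with the polynomial or exponential growth that the monotone-derivative conditions defining $\F_-^{k:j}$ impose on $f$, gives $\E f(S_n)\to\E f(W)$ via uniform integrability. The verifications that the $X_i=Y_i^{m/n,s/n}$ are admissible and that $B_n\overset{\mathrm d}{\longrightarrow}\Pi_{m^2/s}$ are routine; the substance is this passage to the limit in the generalized moment.
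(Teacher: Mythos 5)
Your construction is precisely the one the paper relies on: the extremal two-point summands $Y_i^{\frac mn,\frac sn}$, whose sum attains the bound in \eqref{eq:th(II,1)} and converges in distribution to $\tfrac sm\Pi_{m^2/s}$, so your proposal is correct and follows essentially the paper's (largely implicit) proof, which is embedded in the proof of Theorem~\ref{th:}(II), where the right-hand side of \eqref{eq:th(II,1)} is shown to be nondecreasing in $n$ and to converge to the right-hand side of \eqref{eq:th(II,2)}. The only difference is one of detail: the paper checks the convergence of the generalized moments only for the bounded functions $f_{w,3}$ and leaves the general case implicit, whereas your tangent-line decomposition plus the portmanteau bound, combined with the already-established upper bound \eqref{eq:th(II,2)}, spells out that passage to the limit for an arbitrary $f\in\F_-^{1:3}$.
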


Inequality \eqref{eq:Z,2} is best possible in the following limited sense, at least. 
By \cite[Corollary~5.9]{cones}, this inequality holds for all $f\in\F_-^{1:2}$ if and only if it holds for all 
functions $f$ of the form $f_{w,2}$ for $w\in\R$, where 
\begin{equation}\label{eq:f_w}
	f_{w,\al}(x):=(w-x)_+^\al. 
\end{equation}
Let now positive $m$ and $s$ vary so that $m^2/s\to\infty$, which is the case e.g.\ when $0\ne m_1=m_2=\cdots$, $0<s_1=s_2=\cdots$, conditions \eqref{eq:=m} and \eqref{eq:=s} hold, and $n\to\infty$. 
At that, fix any real $\ka$ and let $w=m+\ka\sqrt s$. 
Let $L_{m,s;w}:=\E f_{w,2}\big(\tfrac sm\Pi_{m^2/s}\big)$, which is, according to Proposition~\ref{prop:exact}, the exact upper bound on $\E f_{w,2}(S_n)$ given $m$ and $s$. 
Then $L_{m,s;w}\sim\E f_{w,2}\big(m+Z\sqrt s\,\big)$; as usual, $a\sim b$ means that $a/b\to1$. 
Indeed, introducing $\tilde Z:=(\Pi_{m^2/s}-m^2/s)/\sqrt{m^2/s}$, one has $\tilde Z\to Z$ in distribution, so that $\frac1s\,L_{m,s;w}=\E f_{\ka,2}(\tilde Z)\to\E f_{\ka,2}(Z)=\frac1s\,\E f_{w,2}\big(m+Z\sqrt s\,\big)$. 
This convergence is justified, since $f_{\ka,2}(\tilde Z)$ is uniformly integrable (as e.g.\ in \cite[Theorem~5.4]{billingsley}), which in turn follows because for any $\la$ and $\al$ in $(0,\infty)$ one has
$\E\exp\frac{\Pi_\la-\la}{\sqrt\la}=\exp\big\{\la\big(e^{-1/\sqrt\la}-1+1/\sqrt\la\,\big)\big\}\le\sqrt e<\infty$ and 
$f_{\ka,\al}(x)/e^{-x}\to0$ as $x\to-\infty$. 

%

Let $\eta$ denote an arbitrary real-valued r.v. 
Recalling that for any natural $\al$ and any 
$w\in\R$ the 
function $f_{w,\al}$ 
belongs to $\F_-^{1:\al}$ and applying the Markov inequality, one sees that Theorem~\ref{th:} immediately implies 

\begin{corollary}\label{cor:tails}\ 
	Let $X_1,\dots,X_n$ be any \emph{nonnegative} r.v.'s satisfying the $\S(m,s)$-condition for some $m$ and $s$ in $(0,\infty)$, so that \eqref{eq:s>m^2/n} holds. 
Then 
\begin{align}
	\P(S_n\le x)&\le P_3\big(\Si_{n;m,s};x\big) \label{eq:cor(1)} \\ 
	&\le P_3\big(\Si_{\infty;m,s};x\big)	\label{eq:cor(2)}\\ 
	&\le P_3\big(m+Z\sqrt s;x\big); \label{eq:cor:Z,2}
\end{align}
here and in what follows, $x$ is an arbitrary real number (unless otherwise indicated), 
\begin{align}
	\Si_{n;m,s}&:=Y_1^{\frac mn,\frac sn}+\dots+Y_n^{\frac mn,\frac sn}\quad\text{for natural $n$}, \label{eq:Si_n}\\  
	\Si_{\infty;m,s}&:=\tfrac sm\Pi_{m^2/s}, \label{eq:Si_infty}
\end{align}
and  
\begin{equation*}
	P_\al(\eta;x):=\inf_{w\in(x,\infty)}\frac{\E(w-\eta)_+^\al}{(w-x)^\al}
\end{equation*}
for any real $\al>0$. 
Also, the upper bound $P_3\big(m+Z\sqrt s;x\big)$ on $\P(S_n\le x)$ can be somewhat improved: 
\begin{equation}\label{eq:cor:Z}
		\P(S_n\le x)\le P_2\big(m+Z\sqrt s;x\big).  
\end{equation}
\end{corollary}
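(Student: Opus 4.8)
The plan is to read this corollary off from Theorem~\ref{th:} by the classical device of testing against the truncated powers $f_{w,\al}$ of \eqref{eq:f_w} and applying Markov's inequality. Recall from the paragraph preceding the corollary that $f_{w,\al}\in\F_-^{1:\al}$ for every natural $\al$ and every $w\in\R$, and that the cones $\F_-^{k:j}$ shrink as $j$ increases. The starting observation is that for any real $w>x$ and any natural $\al$, on the event $\{S_n\le x\}$ one has $w-S_n\ge w-x>0$, hence $(w-S_n)_+^\al\ge(w-x)^\al$, and therefore
\[
\P(S_n\le x)\le\frac{\E(w-S_n)_+^\al}{(w-x)^\al}=\frac{\E f_{w,\al}(S_n)}{(w-x)^\al}.
\]

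First I would establish \eqref{eq:cor(1)}--\eqref{eq:cor:Z,2}. Take $\al=3$; then $f_{w,3}\in\F_-^{1:3}$, so part~(II) of Theorem~\ref{th:} applies and its chain \eqref{eq:th(II,1)}--\eqref{eq:Z,3} gives, for every fixed $w>x$,
\[
\E f_{w,3}(S_n)\le\E f_{w,3}\bigl(\Si_{n;m,s}\bigr)\le\E f_{w,3}\bigl(\Si_{\infty;m,s}\bigr)\le\E f_{w,3}\bigl(m+Z\sqrt s\,\bigr),
\]
with $\Si_{n;m,s}$, $\Si_{\infty;m,s}$ as in \eqref{eq:Si_n}--\eqref{eq:Si_infty}. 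Dividing through by $(w-x)^3$ and combining with the Markov bound above yields, for each $w\in(x,\infty)$, a chain of four ratios whose leftmost term dominates $\P(S_n\le x)$. Since the four ratios are pointwise ordered in $w$, taking $\inf_{w\in(x,\infty)}$ of each preserves the ordering and, by the definition of $P_3(\,\cdot\,;x)$, produces \eqref{eq:cor(1)}, then \eqref{eq:cor(2)}, then \eqref{eq:cor:Z,2}. The sharpened normal bound \eqref{eq:cor:Z} comes from repeating this with $\al=2$: now $f_{w,2}\in\F_-^{1:2}$, which is too large a class for the binomial and Poisson comparisons, but the last assertion of Theorem~\ref{th:} guarantees that \eqref{eq:Z,2} holds for all $f\in\F_-^{1:2}$; hence $\E f_{w,2}(S_n)\le\E f_{w,2}(m+Z\sqrt s\,)$ for each $w>x$, and dividing by $(w-x)^2$, applying Markov with $\al=2$, and taking $\inf_{w\in(x,\infty)}$ gives $\P(S_n\le x)\le P_2(m+Z\sqrt s\,;x)$.

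The argument is short, and the only points requiring attention are routine: one should check that all the expected values involved exist and are finite --- the left-hand sides by Remark~\ref{rem:conds} and \cite[Proposition~5.2]{cones}, and the right-hand sides because $\Si_{n;m,s}$, $\Si_{\infty;m,s}$ and $m+Z\sqrt s$ have finite moments of every order --- so that the Markov step and the passage to the infimum are legitimate. Thus there is essentially no obstacle in the corollary's displayed inequalities themselves; the only genuinely separate (and minor) point is the accompanying claim that \eqref{eq:cor:Z} improves \eqref{eq:cor:Z,2}, i.e.\ $P_2(m+Z\sqrt s\,;x)\le P_3(m+Z\sqrt s\,;x)$, which rests on the monotonicity of $\al\mapsto P_\al(\eta;x)$ and which I would record and prove (or invoke from the cited literature) separately.
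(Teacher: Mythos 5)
Your argument is exactly the paper's: the corollary is obtained by testing Theorem~\ref{th:} against $f_{w,3}$ (and $f_{w,2}$ for \eqref{eq:cor:Z}), applying Markov's inequality, and taking the infimum over $w\in(x,\infty)$, just as the sentence preceding the corollary indicates. The proposal is correct, including the separate remark that the comparison $P_2\le P_3$ rests on the monotonicity of $\al\mapsto P_\al(\eta;x)$, which the paper invokes via \eqref{eq:al to infty}.
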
 

The computation of $P_\al(\eta;x)$ is described (in a somewhat more general setting) in  \cite[Theorem~2.5]{pin98};  
for normal $\eta$, similar considerations were given already in \cite[page~363]{pin94} \big(those descriptions are given for the right tail of $\eta$, so that one will have to make the reflection $x\mapsto-x$ to apply those results\big).   
An elaboration of \cite[Theorem~2.5]{pin98} is presented in \cite[Proposition~3.2]{pin-hoeff-arxiv-reftoAIHP}. 
Concerning fast and effective calculations of the positive-part moments $\E X_+^\al$, see \cite{positive}. 
In \cite{bent-64pp}, one can find specific details on the calculation of $P_\al(\eta;x)$ for $\al\in\{1,2,3\}$ and $\eta$ with a distribution belonging to a common particular family such as binomial and Poisson. 

Let us present here some of those results, which will be useful in this context. 
Take any real $\al>1$ and any r.v.\ $\eta$ such that $\E\eta_-^\al<\infty$; then there exists $\E\eta\in(-\infty,\infty]$. 
Let 
\begin{equation}\label{eq:x_*}
	x_*:=x_*(\eta):=\inf\supp(\eta) 
	, 
\end{equation}
where $\supp(\eta)$ denotes the support set of (the distribution of) the r.v. $\eta$, and 
\begin{equation*}
	\ga(w):=\ga(\eta;w):=\frac{\E\eta\,(w-\eta)_+^{\al-1}}{\E(w-\eta)_+^{\al-1}}
\end{equation*}
for $w\in(x_*,\infty)$. 
Then, by \cite[Proposition~3.2]{pin-hoeff-arxiv-reftoAIHP}, 
the function $\ga$ is continuous and nondecreasing on the interval $(x_*,\infty)$ and for every 
$x\in(x_*,\E\eta)$ there exists a unique $w_x = w_{x;\al,\eta}\in(x_*,\infty)$ such
that
\begin{equation*}
	\ga(w_x) = x;  
\end{equation*}
in fact, $w_x\in(x,\infty)$. 
It follows that, for every 
$x\in(x_*,\E\eta)$,  
\begin{equation}\label{eq:EE}
	\EE_{\al;x}(w):=\E(w-\eta)_+^{\al-1}(\eta-x)
	\left\{
		\begin{alignedat}{2}
	&<0\quad &&\text{for }w\in(x_*,w_x), \\  
	&=0\quad &&\text{for }w=w_x, \\  
	&>0\quad &&\text{for }w\in(w_x,\infty); 
	\end{alignedat}
	\right.
\end{equation}
in particular, $w_x$ is the only root in $(x_*,\infty)$ of the equation 
\begin{equation}\label{eq:EE=0}
	\EE_{\al;x}(w_x)=0. 
\end{equation}
Also by \cite[Proposition~3.2]{pin-hoeff-arxiv-reftoAIHP}, 
\begin{equation*}
	P_\al(\eta;x)=\left\{
	\begin{alignedat}{2}
	&\P(\eta\le x)=\P(\eta=x)\quad &&\text{for }x\in(-\infty,x_*], \\  
	&\frac{\E^\al(w_x-\eta)_+^{\al-1}}{\E^{\al-1}(w_x-\eta)_+^\al}&&\text{for }x\in(x_*,\E\eta), \\  
		&1&&\text{for }x\in[\E\eta,\infty).   
	\end{alignedat}
	\right.
\end{equation*}
In particular, the upper bound $P_\al(\eta;x)$ on the left-tail probability $\P(\eta\le x)$ is exact for $x\in(-\infty,x_*]$. 

Thus, to evaluate $P_\al(\eta;x)$ for any real $x$, it is enough to find $w_x$ (that is, to solve  equation \eqref{eq:EE=0}) for any 
$x\in(x_*,\E\eta)$. 

This is especially easy to do if the r.v.\ $\eta$ takes values in a lattice, which is the case when $\eta$ is $\Si_{n;m,s}$ or $\Si_{\infty;m,s}$, as in Corollary~\ref{cor:tails}. 
Again by \cite[Proposition~3.2]{pin-hoeff-arxiv-reftoAIHP}, 
\begin{equation*}
	P_\al(a+b\eta;x)=P_\al\big(\eta;\tfrac{x-a}b\big)
\end{equation*}
for all real $x$ and $a$ and all $b\in(0,\infty)$. 
So, the calculation of $P_\al(\eta;x)$ for $\eta$ equal $\Si_{n;m,s}$ or $\Si_{\infty;m,s}$ reduces to the situation when the r.v.\ $\eta$ is integer-valued with $x_*=x_*(\eta)=0$; assume for now that this is the case. 
In view of \eqref{eq:cor(1)} and \eqref{eq:cor(2)}, assume also that $\al=3$. Then, by \eqref{eq:EE}, 
\begin{equation}\label{eq:EE=quad}
	\EE_{3;x}(w):=a_j w^2-2b_j w+c_j, 
\end{equation}	
where $x\in(x_*,\E\eta)=(0,\E\eta)$, $w\in(x_*,\infty)=(0,\infty)$, 
\begin{equation*}
\begin{aligned}	
	j&:=\lceil w-1\rceil\ (\text{so that}\ j\in\intr0\infty\ \text{and}\ j<w\le j+1), \\
	a_j&:=a_{j,x}:=\E(\eta-x)\ii{\eta\le j},\\ 
	b_j&:=b_{j,x}:=\E\eta(\eta-x)\ii{\eta\le j},\\ 
	c_j&:=c_{j,x}:=\E\eta^2(\eta-x)\ii{\eta\le j}. 
\end{aligned}	
\end{equation*}
Therefore and in view of \eqref{eq:EE=0} and \eqref{eq:EE}, for each $x\in(x_*,\E\eta)=(0,\E\eta)$ one finds $w_x$ as the only root in the interval $(j_x,j_x+1]$ of the quadratic equation 
\begin{equation}\label{eq:quadr}
	a_{j_x} w_x^2-2b_{j_x} w_x+c_{j_x}=0,  
\end{equation}
where $j_x:=\min\big\{j\in\intr0\infty\colon a_j\,(j+1)^2-2b_j\,(j+1)+c_j\ge0\big\}$. 
If $a_{j_x}\ne0$ then, by \eqref{eq:EE} and \eqref{eq:EE=quad}, $w_x$ is the greater of the roots of the above quadratic equation.  


The interesting paper \cite{dance} presents, for any given $n\in\intr0\infty\cup\{\infty\}$ and $\la\in(1,\infty)$, the exact upper bound (say $B_{n,\la}$) on $\P(S\le1)$ under the condition that $S=\sum_{i=1}^n X_i$, where the $X_i$'s are independent r.v.'s such that $0\le X_i\le1$ for all $i\in\intr1n$ 
and $\E S=\la$. 
\big(For $\la\in[0,1]$, the exact upper bound $B_{n,\la}$ is trivial and equals $1$; indeed, let $X_1$ take values $0$ and $1$ with probabilities $1-\la$ and $\la$, respectively, and let $X_i=0$ for all $i\in\intr2n$.\big)
Note that the conditions $0\le X_i\le1$ for all $i$ and $\E S=\la$ imply $\sum_i\E X_i=\la$ and $\sum_i\E X_i^2\le\la$, which corresponds to the $\S(m,s)$-condition with $m=s=\la$. 
So, it makes sense to compare the bound $P_3\big(\Si_{n;\la,\la};1\big)$ in \eqref{eq:cor(1)}--\eqref{eq:cor(2)} with $B_{n,\la}
$. 
Graphs of these two bounds and their ratio in the case $n=\infty$ are shown in Figure~\ref{fig:compar-Dance3}. 

\begin{figure*}[h]
	\centering		\includegraphics[width=1.00\textwidth]{
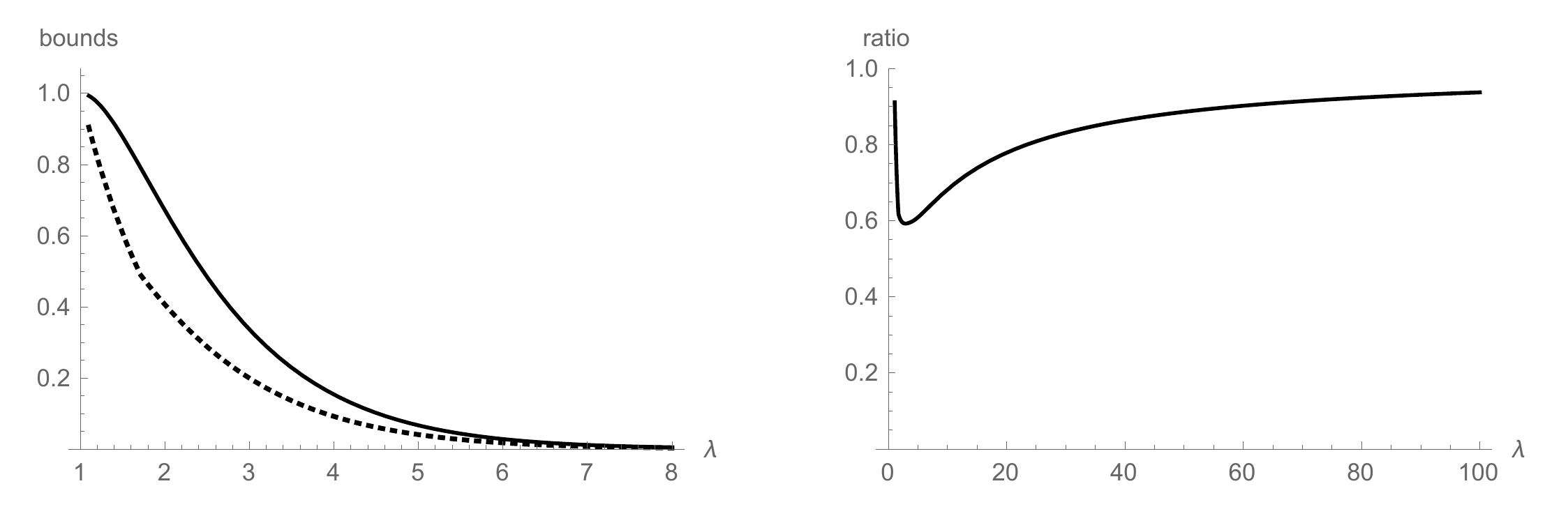}
	\caption{Left panel: graphs $\big\{\big(\la,P_3(\Si_{\infty;\la,\la};1)\big)\colon1.1\le\la\le8\big\}$ (solid) and $\{(\la,B_{\infty,\la})\colon\break 
	1.1\le\la\le8\}$ (dotted). 
	Right panel: graph $\big\{\big(\la,B_{\infty,\la}/P_3(\Si_{\infty;\la,\la};1)\big)\colon1.1\le\la\le100\big\}$. 
	}
	\label{fig:compar-Dance3}
\end{figure*}
\noindent The calculations of $P_3\big(\Si_{\infty;\la,\la};1\big)$ here were done in accordance with the above description, containing formulas \eqref{eq:x_*}--\eqref{eq:quadr}; it takes less than $0.3$ sec with Mathematica on a standard laptop to produce either of the two graphs in Figure~\ref{fig:compar-Dance3}. 
It can be seen that the bound $P_3\big(\Si_{\infty;\la,\la};1\big)$ is not much greater than the optimal bound $B_{\infty,\la}$, especially when $\la$ is close to either $1$ or $\infty$; the corresponding comparisons for finite $n$ look similar. 
On the other hand, our bounds $P_3\big(\Si_{n;m,s};x\big)$ hold under much more general conditions: (i) for all $x\in\R$, rather than just for $x=1$; (ii) assuming only the $(m,s)$-condition (on the sums of the first and second moments of the $X_i$'s), rather than requiring all the $X_i$'s to be bounded by the constant $1$ -- which latter also coincides with the value of $x$ chosen in \cite{dance}; (iii) assuming the more general dependence conditions. 

By \cite[Proposition
~3.5]{pin-hoeff-arxiv-reftoAIHP}, 
\begin{equation}\label{eq:al to infty}
	P_\al(\eta;x)\uparrow P_\infty(\eta;x):=\inf_{h<0}e^{-hx}\E e^{h\eta}
\end{equation}
as $\al$ increases from $0$ to $\infty$; 
thus, the bounds $P_\al(\eta;x)$ improve on the so-called exponential bounds $P_\infty(\eta;x)$. 
In particular, letting 
\begin{equation*}
	\la:=\frac{m^2}s\quad\text{and}\quad z:=\frac{x-m}{\sqrt s},   
\end{equation*}
one has  
\big(cf.\ \eqref{eq:cor(1)}, \eqref{eq:cor(2)}, and \eqref{eq:cor:Z}\big), 
\begin{align}
P_2\big(m+Z\sqrt s;x\big)
&\le P_\infty\big(m+Z\sqrt s;x\big)=e^{-z^2/2}, \label{eq:<P_infty(Z)} 
\\
	P_3\big(\Si_{\infty;m,s};x\big)
	&\le P_\infty\big(\Si_{\infty;m,s};x\big) \label{eq:<P_infty(Pi)} \\ 
	&=\exp\Big\{-\la\Big[\Big(1+\frac z{\sqrt\la}\Big)\ln\Big(1+\frac z{\sqrt\la}\Big)-\frac z{\sqrt\la}\Big]\Big\} \label{eq:P_infty(Pi)}\\
	&\le P_\infty\big(m+Z\sqrt s;x\big), \label{eq:P_infty(Pi)<P_infty(Z)} \\ 
	P_3\big(\Si_{n;m,s};x\big)
	&\le P_\infty\big(\Si_{n;m,s};x\big) \label{eq:<P_infty(Si)} \\ 
	&=\left(\frac{\lambda }{\lambda +z\sqrt{\lambda }}\right)^{\lambda +z\sqrt{\lambda }} \left(\frac{n-\lambda }{n-\lambda -z\sqrt{\lambda }}\right)^{n-\lambda-z\sqrt{\lambda }} \label{eq:P_infty(Si)}\\
	&\le P_\infty\big(\Si_{\infty;m,s};x\big),  \label{eq:P_infty(Si)<P_infty(Pi)}
\end{align}
for natural $n\ge\la$ and $z\in[-\sqrt\la,0)$; for $z=-\sqrt\la$, the expressions in \eqref{eq:P_infty(Pi)}  and \eqref{eq:P_infty(Si)} for $P_\infty\big(\Si_{n;m,s};x\big)$ and $P_\infty\big(\Si_{\infty;m,s};x\big)$ are defined by continuity, as $e^{-\la}$ and $(1-\la/n)^n$, respectively;  
inequalities \eqref{eq:P_infty(Pi)<P_infty(Z)} and \eqref{eq:P_infty(Si)<P_infty(Pi)} follow by \eqref{eq:al to infty}, \eqref{eq:Si_infty}, \eqref{eq:Z,3}, \eqref{eq:Si_n}, 
%
and \eqref{eq:th(II,2)}.

The exponential upper bounds \eqref{eq:<P_infty(Z)} and \eqref{eq:<P_infty(Si)} are the same (up to a shift, rescaling, and reflection $x\mapsto-x$) as Hoeffding's bounds in \cite[(2.1) and (2.3)]{hoeff63}, where they were obtained under an additional condition, which can be stated in terms of the present paper as 
\begin{equation}\label{eq:hoeff cond}
	\text{$\P(X_i\le\tfrac sm)=1$ for all $i\in\intr1n$.}  
\end{equation}
Note that \eqref{eq:hoeff cond}, together with the conditions 
\eqref{eq:=m_i} and \eqref{eq:=m}, implies the second inequalities in \eqref{eq:>m_i,<s_i} and \eqref{eq:>m,<s} with $s_i:=\frac sm\,m_i$. 

For independent $X_i$'s \big(but without the additional restriction \eqref{eq:hoeff cond}\big), 
the exponential upper bounds 
in \eqref{eq:<P_infty(Z)} and \eqref{eq:P_infty(Pi)} 
on $\P(S_n\le x)$ --- as well as the exact upper bound $\E f\big(\tfrac sm\Pi_{m^2/s}\big)$ on 
$\E f(S_n)$ for $f(x)\equiv e^{hx}$ with $h<0$ --- were essentially obtained in \cite[Theorem~7]{pin-utev-exp}. 
Note two mistakes concerning the latter result: (i) in the proof in \cite{pin-utev-exp}, $\psi(u)$ should be replaced by $\psi(hu)$ and (ii) what is presented as the proof of Theorem~7 in \cite{pin-utev-exp} is in fact that of Theorem~8 therein, and vice versa. 
Results of \cite{pin-utev-exp} seem yet relatively unknown, as the bound $e^{-z^2/2}$ on $\P(S_n\le x)$ appeared later in \cite{maurer}. 

By \cite[Theorem~3.11]{pin98} or \cite[Theorem~4]{pin99}, with $c_{\al,0}:=\Gamma(\al+1)(e/\al)^\al$, 
\begin{equation*}
	P_\al(\eta;x)\le c_{\al,0}\,\P(\eta\le x)
\end{equation*}
provided that the tail function $x\mapsto\P(\eta\le x)$ is log-concave. 
Combining this result with the Cantelli inequality, one also has the following upper bound on $\P(S_n\le x)$: 
\begin{equation*}
	W(z):=\min\Big(1,\frac1{1+z^2},\,c_{2,0}\P(Z\le z)\Big); 
\end{equation*}
note that $c_{2,0}=e^2/2=3.69\dots$. 
This bound may serve as an easier to compute and deal with approximation to the better bound $P_2\big(m+Z\sqrt s;x\big)$. 

%

\begin{figure}[h]
	\centering
\includegraphics[width=1.00\textwidth]
{
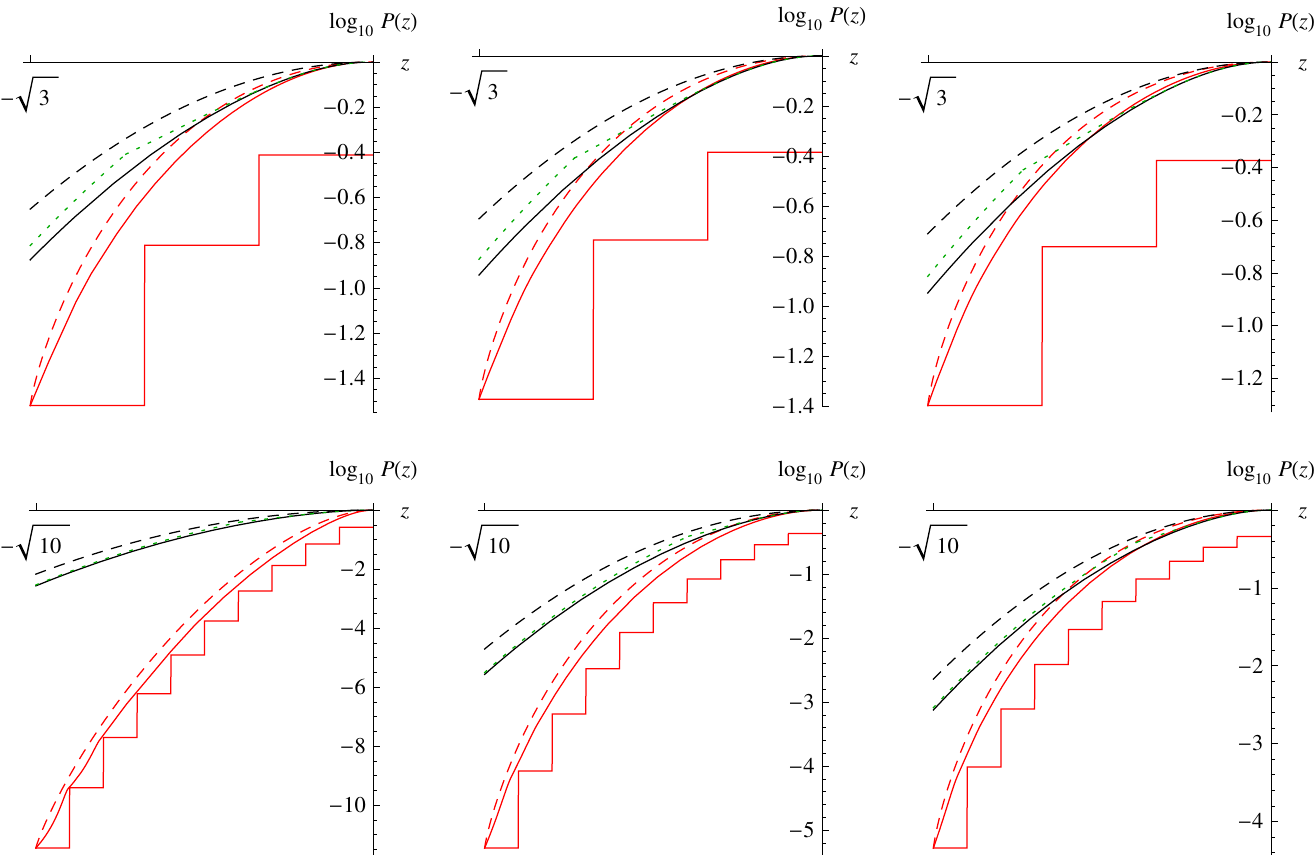}
	\caption{Decimal logarithms of the bounds/tails $P(z)$, for $\la=10$ (first row) and $\la=3$ (second row). The columns correspond to $n=11$ (left), $n=30$ (middle), and $n=\infty$ (right). }
	\label{fig:}
\end{figure}

All the mentioned upper bounds $P(z):=P_\al(\eta;x)$ for $\eta$ equal $\Si_{n;m,s}$ or $m+Z\sqrt s$ 
can be fully expressed in terms of $z$, $\la$, and $n$. These bounds are compared graphically in Figure~\ref{fig:} for $\la\in\{3,10\}$, $\al\in\{0,2,3,\infty\}$, $n\in\{11,30,\infty\}$, and $z\in\big(-\sqrt\la,0\big)$; note that $\P(\Si_{n;m,s}\le x)=P_\al(\Si_{n;m,s};x)=0$ if $z<-\sqrt\la$; here, as is natural, $P_\al(\Si_{n;m,s};x)$ is interpreted as the true tail probability $\P(\Si_{n;m,s}\le x)$ for $\al=0$. 
The graphs of $\log_{10}P_\al(\Si_{n;m,s};x)$ shown in Figure~\ref{fig:} are red: stepwise for $\al=0$, solid-continuous for $\al=3$, and dashed-continuous for $\al=\infty$.  
The graphs of $\log_{10}P_\al\big(m+Z\sqrt s;x\big)$ are black: solid for $\al=2$, and dashed for $\al=\infty$. 
No graphs are shown for $P_\al(\Si_{n;m,s};x)$ with $\al=2$, as those are not established bounds; 
nor is there a graph for $P_\al\big(m+Z\sqrt s;x\big)$ with $\al=3$, as the better bound with $\al=2$ is available.   
Also, a graph for $W(z)$ is shown, dotted-green. 

It is seen that the bound $P_3\big(\Si_{n;m,s};x\big)$ is close to the true tail probability $\P\big(\Si_{n;m,s}\le x\big)$, especially for $\la=10$ and $n=11$, with a zero error at the left end-point \big($-\sqrt\la$\,\big) of the range of each of the r.v.\ $\big(\Si_{n;m,s}-m)/\sqrt s$, which is in accordance with part (iv)(b) of the mentioned \cite[Proposition~3.2]{pin-hoeff-arxiv-reftoAIHP}. 
In the latter case ($\la=10$ and $n=11$), the bound $P_3\big(\Si_{n;m,s};x\big)$ is over 8 
times better near the left-end point of the range than the ``normal'' exponential bound $e^{-z^2/2}$. 
However, $P_3\big(\Si_{n;m,s};x\big)$ may be slightly greater for $z$ near $0$ than the ``normal'' better-than-exponential bound $P_2\big(m+Z\sqrt s;x\big)$; this is due to the fact the class $\F_-^{1:2}$ is somewhat richer than $\F_-^{1:3}$.


\section{Proofs}\label{proofs}

\begin{proof}[Proof of Theorem~\ref{th:}]\ 

\noindent\textbf{(I)}\quad By a standard induction argument (cf.\ e.g.\ \cite[Lemma~12]{asymm}), in order to prove part (I) of the theorem, it is enough to show that \eqref{eq:th(I)} holds for $n=1$. 
Moreover, by \cite[Corollary~5.9]{cones}, 
we may assume that $f=f_{w,2}$ for some $w\in\R$, where $f_{w,2}$ is defined by formula \eqref{eq:f_w}. 
So, the proof of part (I) will be complete once it is shown that 
\begin{equation}\label{eq:th(I),n=1}
	\E f_{w,2}(X)\le\E f_{w,2}\big(Y^{m,s}\big) 
\end{equation} 
whenever the r.v.\ $X$ is nonnegative, $\E X\ge m$, $\E X^2\le s$, $w\in\R$, and $0<m\le\sqrt s$. 
For $w\le0$, both sides of \eqref{eq:th(I),n=1} are zero. So, w.l.o.g.\ $w>0$. 
Introduce now $z:=\frac sm$, $v:=w\vee z$, and $c:=\frac wv$, and then $g(x):=c^2(v-x)^2$. 
Then $\P(Y^{m,s}\in\{0,z\})=1$, 
$f_{w,2}\le g$ on $[0,\infty)$ and $f_{w,2}=g$ on $\{0,z\}$, whence $g(Y^{m,s})=f_{w,2}(Y^{m,s})$ almost surely (a.s.).  
Note also that $v>0$ and recall the relations $\E X\ge m=\E Y^{m,s}$ and $\E X^2\le s=\E(Y^{m,s})^2$. 
Thus, 
\begin{align*}
	\E f_{w,2}(X)\le\E g(X)&=c^2(v^2-2v\E X+\E X^2) \\
	&\le c^2\big(v^2-2v\E Y^{m,s}+\E(Y^{m,s})^2\big) \\ 
	&=\E g(Y^{m,s})=\E f_{w,2}(Y^{m,s}),  
\end{align*}
which completes the proof of part (I) of Theorem~\ref{th:}. 

\noindent\textbf{(II)}\quad 
Take any $f\in\F_-^{1:3}$ and consider  
\begin{equation*}
	F_{n,f}(P_1,\dots,P_n):=\E f\big(Y^{m_1,s_1}+\dots+Y^{m_n,s_n}\big),
\end{equation*}
the right-hand side of \eqref{eq:th(I)}, where 
\begin{equation}\label{eq:P}
	P_i:=(m_i,s_i)
\end{equation}
for all $i$. 
Note that the function $F_{n,f}$ is symmetric (with respect to all permutations of its $n$ arguments, $P_1,\dots,P_n$). 
Next, if nonnegative r.v.'s $X_1,\dots,X_n$ satisfy the $\S(m,s)$-condition, they satisfy the $\S(\m,\s)$-condition for some $m_1,\dots,m_n,\break 
s_1,\dots,s_n$ such that $m_1+\dots+m_n=m$ and $s_1+\dots+s_n=s$. 
So, by \eqref{eq:th(I)}, to prove \eqref{eq:th(II,1)} it is enough to show that 
\begin{equation}\label{eq:F<F}
	F_{n,f}(P_1,\dots,P_n)\le F_{n,f}(\p_n,\dots,\p_n),
\end{equation}
where $\p_n:=\frac1n(P_1+\dots+P_n)$. 
Here we shall need the following lemma, which establishes a Schur-concavity-like property of the symmetric function $F_{n,f}$. 

\begin{lemma}\label{lem:schur}
For any natural $n\ge2$ and any $t\in[0,1]$
\begin{equation*}
	F_{n,f}(P_1,\dots,P_n)\le F_{n,f}(P_{1+t},P_{2-t},P_3\dots,P_n), 
\end{equation*}
where $P_{1+t}:=(1-t)P_1+tP_2$ and hence $P_{2-t}=tP_1+(1-t)P_2$. 
\end{lemma}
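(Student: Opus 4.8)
The plan is to reduce the statement, as in the proof of part~(I), to the case $f=f_{w,2}$ for $w\in\R$, via \cite[Corollary~5.9]{cones}; this is legitimate because for fixed $P_3,\dots,P_n$ and fixed $t$ the inequality in question is linear in $f$, and $F_{n,f}$ is, for each fixed tuple of arguments, a positive linear functional of $f$. However, since we are working in the class $\F_-^{1:3}$ (needed for the three-moment arguments leading to the Poisson and normal bounds), I would instead reduce to $f=f_{w,3}$, i.e.\ $f(x)=(w-x)_+^3$, keeping in mind that the case $f=f_{w,2}$ is already covered by part~(I) of the theorem plus a Schur-type argument, or can be run in parallel. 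After this reduction it suffices to prove the scalar inequality obtained by writing out $F_{n,f_{w,3}}$ as an expectation over the independent r.v.'s $Y^{m_i,s_i}$, $i\ge3$, conditioning on their sum, say $U:=Y^{m_3,s_3}+\dots+Y^{m_n,s_n}$, and noting $F_{n,f}(P_1,P_2,P_3,\dots,P_n)=\E\,\E f\big(Y^{m_1,s_1}+Y^{m_2,s_2}+U\,\big|\,U\big)$.

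So the core is a two-variable claim: for the independent two-point r.v.'s $Y^{m_1,s_1}$ and $Y^{m_2,s_2}$ and any fixed $u\ge0$ (a possible value of $U$), the quantity $\phi(P_1,P_2):=\E f\big(Y^{m_1,s_1}+Y^{m_2,s_2}+u\big)$ does not decrease when $(P_1,P_2)$ is replaced by $(P_{1+t},P_{2-t})$ — i.e.\ when the pair of $(m,s)$-parameters is averaged toward each other. The first step here is to make the dependence on $P_1=(m_1,s_1)$ explicit: since $\P(Y^{m_1,s_1}=s_1/m_1)=m_1^2/s_1=:p_1$ and $\P(Y^{m_1,s_1}=0)=1-p_1$, one has $\E h(Y^{m_1,s_1})=h(0)+p_1\big(h(s_1/m_1)-h(0)\big)$ for any $h$. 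Applying this twice, $\phi(P_1,P_2)$ becomes a bilinear-type expression in the data: a sum of four terms indexed by whether each $Y$ is $0$ or $s_i/m_i$, with coefficients $p_1,p_2$ and values of $f$ at $0$, $s_1/m_1+u$, $s_2/m_2+u$, $s_1/m_1+s_2/m_2+u$. One then has to show that the resulting function of $(P_1,P_2)$ is "Schur-concave along the segment" joining $P_1$ and $P_2$, i.e.\ that $t\mapsto\phi(P_{1+t},P_{2-t})$ is $\cap$-shaped, or at least $\ge$ its endpoint values — actually one only needs $\phi(P_{1+t},P_{2-t})\ge\phi(P_1,P_2)=\phi(P_{1+0},P_{2-0})$; by symmetry of $\phi$ in $P_1,P_2$ and convexity considerations it is natural to prove the stronger midpoint-type statement or differentiate in $t$.

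The main obstacle is precisely this last scalar estimate: after the explicit expansion one must exploit both hypotheses $s_i\ge m_i^2$ and the structure $f\in\F_-^{1:3}$ (equivalently $-f'$, $-f''$, $-f'''$ of $g=f^-$ having appropriate monotonicity/sign, or concretely that for $f_{w,3}$ the relevant finite differences are controlled). The technically delicate point is that the map $(m,s)\mapsto s/m$ (the location of the non-zero atom) is itself nonlinear in $P=(m,s)$, so averaging $P_1,P_2$ does not simply average the atoms; one must track how $f$ evaluated at $s_i/m_i+(\cdot)$ and the probabilities $p_i=m_i^2/s_i$ co-vary. I expect the clean way through is to compute $\frac{\dd}{\dd t}\phi(P_{1+t},P_{2-t})$ and $\frac{\dd^2}{\dd t^2}$, show the first derivative vanishes at the symmetric point $t=1/2$ and the second derivative is $\le0$ throughout $[0,1]$, using a Taylor/finite-difference representation of $f\in\F_-^{1:3}$ in terms of $f_{w,3}$ (so that $f''$ is convex nonincreasing, etc.) together with $s_i\ge m_i^2$; then monotone behavior on $[0,\tfrac12]$ and $[\tfrac12,1]$ gives $\phi(P_{1+t},P_{2-t})\ge\phi(P_1,P_2)$ for all $t\in[0,1]$. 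Reintegrating over $U$ and invoking the reduction to $f_{w,3}$ (and the already-proven $f_{w,2}$ case from part~(I)) then yields Lemma~\ref{lem:schur}.
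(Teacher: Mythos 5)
Your reductions reproduce the paper's outline: the passage to $n=2$ by conditioning on $U=Y^{m_3,s_3}+\dots+Y^{m_n,s_n}$ (the conditional moment function stays in the class by shift-invariance), the reduction to $f=f_{w,3}$ via \cite[Corollary~5.9]{cones}, and the idea of studying $t\mapsto G(t):=F_{2,f}(P_{1+t},P_{2-t})$. (The hedging about $f_{w,2}$ is unnecessary: for $\F_-^{1:3}$ the extreme functions are the $f_{w,3}$, and for the affine part the two sides agree because $m_1+m_2$ is preserved.) But the proposal stops exactly where the actual proof begins. The entire content of the paper's argument is the quantitative step you defer: after writing $G(t)$ in terms of $a=s_1/m_1$, $b=s_2/m_2$, $p=m_1^2/s_1$, $q=m_2^2/s_2$, one must prove $G'(0)\ge0$, which splits into four cases according to the position of $w=1$ relative to $b<a<a+b$ and becomes the nonnegativity of four explicit quartic-type polynomials $D_0,\dots,D_3$ in $(a,b,p,q)$ on the regions $\Om_k$. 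Verifying these occupies Lemma~\ref{lem:G'(0)>0} and Sections~\ref{alt1}--\ref{alt2} (Tarski quantifier elimination via \texttt{Reduce}/Redlog, or a long hand proof via Hessians, boundary cases and resultants). Saying ``I expect the clean way through is to compute the derivatives and use a Taylor/finite-difference representation'' does not establish any of this; no part of the inequality is actually proved.

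Moreover, the specific route you sketch is stronger than what is needed and is unsupported. You propose to show that $G$ is concave in $t$ on all of $[0,1]$ (second derivative $\le0$, first derivative vanishing at $t=\tfrac12$, the latter being automatic from the symmetry $G(t)=G(1-t)$). The paper never proves, and does not need, concavity: it proves only $G'(0)\ge0$ for \emph{every} admissible pair $(P_1,P_2)$, and then gets $G'(\tau)\ge0$ for all $\tau\in[0,\tfrac12)$ by the reparametrization $G_{P_1,P_2}(t)=G_{P_{1+\tau},P_{2-\tau}}(s)$, $s=\frac{t-\tau}{1-2\tau}$, using that the constraint set $\{(m,s):s\ge m^2\}$ is convex so that $(P_{1+\tau},P_{2-\tau})$ is again admissible. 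This yields unimodality of $G$, which suffices; concavity of $G$ in $t$ is a genuinely stronger statement (the atoms $s(t)/m(t)$ and probabilities $m(t)^2/s(t)$ are nonlinear in $t$, and the positive-part function $f_{w,3}$ introduces case boundaries), and you give no argument that the second derivative has a sign. So there are two gaps: the core polynomial inequality is missing entirely, and the plan for obtaining it targets an unverified and possibly false strengthening instead of the first-derivative-at-zero statement that the paper's reparametrization trick makes sufficient.
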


The proof of Lemma~\ref{lem:schur} will be given at the end of this section. 

Note that $F_{n,f}$ is a function of $n$ points $P_1,\dots,P_n$ in $\R^2$, rather than of $n$ real arguments. If the latter were the case, then Lemma~\ref{lem:schur} together with the well-known Muirhead lemma (see e.g.\ \cite[Lemma~2.B.1]{marsh-ol}) would immediately imply the Schur-concavity and hence \eqref{eq:F<F}. 
However, no appropriate ``multidimensional'' analogue of the Muirhead lemma seems to exist. 
Indeed, if one defines the ``multivariate'' majorization by means of doubly stochastic matrices (in accordance with the Hardy-Littlewood-Polya characterization --- see e.g.\ \cite[Theorem~2.B.2]{marsh-ol}), then the analogue of the Muirhead lemma fails to hold. For example, take $n=3$ and consider the doubly stochastic $3\times3$ matrices \big(say $A$ and $B_t$, for some $t\in[0,1]$\big) that transform any triple $\tau:=(Q_1,Q_2,Q_3)$ of points in $\R^2$ to (say)  $\tilde\tau:=\big(\frac{Q_1+Q_2}2,\frac{Q_1+Q_3}2,\frac{Q_2+Q_3}2\big)$ and $\tau_t:=\big((1-t)Q_1+tQ_2,tQ_1+(1-t)Q_2,Q_3\big)$, respectively; 
matrices such as $B_t$ are referred to as $T$-transform matrices, all of which latter can be written as $C^{-1}B_tC$ for some $t\in[0,1]$ and some permutation matrix $C$; see e.g.\ \cite[Section~2.B]{marsh-ol}.  
Then, if the points $Q_1,Q_2,Q_3$ are not collinear, already after one application of any 
matrix $B_t$ with $t\in(0,1)$ 
to $\tau$  
one will never be able to get from $\tau_t$ to $\tilde\tau$ via any chain of $T$-transforms, since the points $\frac{Q_1+Q_3}2$ and $\frac{Q_2+Q_3}2$ do not belong to the convex hull of the set $\{(1-t)Q_1+tQ_2,tQ_1+(1-t)Q_2,Q_3\}$. 
%

We shall verify \eqref{eq:F<F} by induction on $n$. For $n=1$, \eqref{eq:F<F} is trivial. 
Suppose that \eqref{eq:F<F} holds for $n$ equal some natural $k$, and consider $n=k+1$. 
Introduce $\tP_k:=\frac1{k+1}\p_k+(1-\frac1{k+1})P_{k+1}$, 
$f_{k+1}(x):=\E f\big(x+Y^{m_{k+1},s_{k+1}}\big)$, and $g_{k+1}(x)
:=\E f\big(x+Y^{\bar m_{k+1},\bar s_{k+1}}\big)$, where $(\bar m_{k+1},\bar s_{k+1}):=\p_{k+1}$. 
By Remark~\ref{rem:shift}, the functions $f_{k+1}$ and $g_{k+1}$ are in $\F_-^{1:3}$. 
Also,  
\begin{gather}\label{eq:tp}
	\tfrac1k\,\big(\tP_k+(k-1)\p_k\big)=\p_{k+1}. 
\end{gather}
It follows that 
\begin{alignat*}{2}
	F_{k+1,f}(P_1,&\dots,P_{k+1})&& \\
	&=\E F_{k,f_{k+1}}(P_1,\dots,P_k) 
	&&\text{\big(by the definition of $f_{k+1}$\big)} \\
	&\le\E F_{k,f_{k+1}}(\p_k,\dots,\p_k) 
		&&\text{\big(by induction\big)} \\
	&=F_{k+1,f}(P_{k+1},\p_k,\dots,\p_k) 
		&&\text{\big(by the definition of $f_{k+1}$} \\
	&
		&&\text{\ \ and the symmetry of $F_{k+1,f}$\big)} \\
	&\le F_{k+1,f}(\tP_k,\p_{k+1},\p_k,\dots,\p_k) 
			\ \ &&\text{\big(by Lemma~\ref{lem:schur} with $t=\tfrac1{k+1}
			$\big)} \\
	&=\E F_{k,g_{k+1}}(\tP_k,\p_k,\dots,\p_k) 
		&&\text{\big(by the definition of $g_{k+1}$\big)} \\
	&\le\E F_{k,g_{k+1}}(\p_{k+1},\dots,\p_{k+1}) 
		&&\text{\big(by induction and \eqref{eq:tp}\big)} \\
	&=F_{k+1,f}(\p_{k+1},\dots,\p_{k+1}) 
			&&\text{\big(by the definition of $g_{k+1}$\big)}. 
\end{alignat*}
This completes the proof of \eqref{eq:th(II,1)}, modulo Lemma~\ref{lem:schur}. 

By an argument similar to that used in the proof of part (I) of Theorem~\ref{th:}, it is enough to verify 
\eqref{eq:th(II,2)} and \eqref{eq:Z,3} for $f=f_{w,3}$, and \eqref{eq:Z,2} for $f=f_{w,2}$. 

In inequality \eqref{eq:th(II,1)} with $n+1$ instead of $n$, take $X_{n+1}=0$ and $X_i=Y_i^{\frac mn,\frac sn}$ for $i\in\intr1n$; it then follows that the 
right hand-side of \eqref{eq:th(II,1)} is nondecreasing in $n$, for any fixed positive real $m$ and $s$. 
Next,  
(i) all the r.v.'s in \eqref{eq:th(II,1)} and \eqref{eq:th(II,2)} are nonnegative, (ii) the function $f_{w,3}$ is continuous and bounded on $[0,\infty)$, and (iii) $Y_1^{\frac mn,\frac sn}+\dots+Y_n^{\frac mn,\frac sn}$ converges in distribution to $\tfrac sm\Pi_{m^2/s}$ as $n\to\infty$. 
So, the right hand-side of \eqref{eq:th(II,1)} is, not only nondecreasing in $n$, but also converging to the right hand-side of \eqref{eq:th(II,2)} as $n\to\infty$ (for $f=f_{w,3}$). 
Thus, \eqref{eq:th(II,2)} follows. 

As 
for inequality \eqref{eq:Z,3}, it is essentially a special case of \eqref{eq:Z,2}.  Indeed, consider the latter inequality with $n\to\infty$ and $X_1=X_1^{(n)},\dots,X_n=X_n^{(n)}$ being independent copies of $c_n\Pi_{\la_n}$, where $c_n:=\frac{s-m^2/n}m\sim\frac sm$ and $\la_n:=\frac{m^2}{ns-m^2}\sim\frac{m^2}{ns}$. 
Then the r.v.'s $X_1,\dots,X_n$ satisfy the $\S(m,s)$-condition, and 
$S_n$ converges to $\tfrac sm\Pi_{m^2/s}$ in distribution. 
Therefore, $\E f_{w,2}(S_n) \longrightarrow \E f_{w,2}\big(\tfrac sm\Pi_{m^2/s}\big)$. 


Thus, it remains to prove \eqref{eq:Z,2}, for $f=f_{w,2}$. 
If at that $w\le0$, then the left-hand side of \eqref{eq:Z,2} is zero, while its right-hand side is nonnegative. 
Therefore and by rescaling, w.l.o.g.\ $w=1$.  
Also, as in the proof of part (I) of Theorem~\ref{th:}, w.l.o.g.\ $n=1$. 
Thus, also in view of \eqref{eq:th(I)} and \eqref{eq:s>m^2/n}, to complete the proof of Theorem~\ref{th:}, it suffices to show that 
\begin{equation*}
	\de(m):=\de(m,k):=\frac{\E\big(1-m-kmZ\,\big)_+^2-\E\big(1-Y^{m,\,k^2m^2}\big)_+^2}
	{2\big(k^2m^2+(m - 1)^2\big)}
	\ge0
\end{equation*}
for all $m\in(0,\infty)$ and $k\in(1,\infty)$. 
Take indeed any $k\in(1,\infty)$. 
Note that 
\begin{equation*}
\de'(m)\,k^2 \left(k^2 m^2+(m-1)^2\right)^2=
\left\{
\begin{aligned}
(D\de)_1(m) & \text{ if }m\in(0,1/k^2],  \\
(D\de)_2(m) & \text{ if }m\in[1/k^2,\infty),  
\end{aligned} 
\right. 
\end{equation*}
where 
\begin{equation*}
\begin{aligned}
(D\de)_1(m)&:=
	 k^2 m(1-m) -k^5 m^2 \vpi\Big(\frac{m-1}{k m}\Big), \\ 
(D\de)_2(m)&:= (k^2-1) (k^2 m-1+m)-k^5 m^2 \vpi \Big(\frac{m-1}{k m}\Big),  
\end{aligned} 	
\end{equation*}
and $\vpi$ is the standard normal density function. 
Next, for $m\in(0,1/k^2]$ one has $m(1-m)>0$  and 
\begin{equation*}
	\frac{\dd}{\dd m}\Big(\frac{(D\de)_1(m)}{m(1-m)}\Big)=-\frac{k^3 \left(k^2 m^2+(m-1)^2\right)}{(1-m)^2m^2}\,\vpi \Big(\frac{m-1}{k m}\Big)<0; 
\end{equation*}
so, $(D\de)_1$ --- and hence $\de'$ --- may change in sign on the interval $(0,1/k^2]$ at most once, and only from $+$ to $-$. 
Similarly, for $m\in(1/k^2,\infty)$ one has $k^2 m-1+m>k^2 m-1>0$  and 
\begin{equation*}
	\frac{\dd}{\dd m}\Big(\frac{(D\de)_2(m)}{k^2 m-1+m}\Big)=
	-\frac{k^3 \left(k^2 m-1\right) \left(k^2 m^2+(m-1)^2\right)}{m \left(k^2 m-1+m\right)^2}\,\vpi \Big(\frac{m-1}{k m}\Big)<0;  
\end{equation*}
so, $(D\de)_2$ --- and hence $\de'$ --- may change in sign on the interval $[1/k^2,\infty)$ at most once, and only from $+$ to $-$. 
Thus, 
$\de'$ may change in sign on the interval $(0,\infty)$ at most once, and only from $+$ to $-$. 
It follows that $\de(m)\ge\de(0+)\wedge\de(\infty-)$ for all $m\in(0,\infty)$. 
So, to complete the proof of Theorem~\ref{th:}, it remains to check that $\de(0+)\wedge\de(\infty-)\ge0$. 
In fact, one can see that $\de(0+)=0$ and 
\begin{equation}\label{eq:de(infty)}
	2\de(\infty-)=q(t):=\P(Z>t)-\frac{t\,\vpi(t)}{t^2+1}>0,  
\end{equation}
with $t:=1/k>0$. The inequality in \eqref{eq:de(infty)} is well known; see e.g.\ 
\cite[(19) for $\phi_2$]{shenton}; 
alternatively, it follows because $q'(t)=-\frac{2 \vpi (t)}{\left(t^2+1\right)^2}<0$ and $q(\infty-)=0$.  
This completes the entire proof of Theorem~\ref{th:}, modulo Lemma~\ref{lem:schur}. 
\end{proof}

\begin{proof}[Proof of Lemma~\ref{lem:schur}]
W.l.o.g.\ $n=2$ --- cf. e.g.\ the first equality in the big display following \eqref{eq:tp}. 
Also, by the symmetry under permutations, w.l.o.g.\ $t\in[0,\frac12]$. 
Moreover, 
w.l.o.g.\ $t\ne\frac12$; here and elsewhere we are using (sometimes tacitly) a version of continuity relevant in a given context. 
So, it suffices to show that $G'(t)\ge0$ for all $t\in[0,\frac12)$, where 
\begin{equation}\label{eq:G}
	G(t):=G_{P_1,P_2}(t):=F_{2,f}(P_{1+t},P_{2-t}). 
\end{equation}
Actually, it is enough to show that 
\begin{equation}\label{eq:G'(0)>0}
G'(0)\overset{\text{(?)}}\ge0,	
\end{equation}
because for any $\tau\in[0,\frac12)$ and $s:=\frac{t-\tau}{1-2\tau}$, one has $P_{1+t}=(1-s)P_{1+\tau}+sP_{2-\tau}$ and  $P_{2-t}=sP_{1+\tau}+(1-s)P_{2-\tau}$, whence $G_{P_1,P_2}(t)=G_{P_{1+\tau},P_{2-\tau}}(s)$ and 
$G'_{P_1,P_2}(\tau)=G'_{P_{1+\tau},P_{2-\tau}}(0)/(1-2\tau)$. 
Next --- cf.\ the proof of part (I) of Theorem~\ref{th:} --- w.l.o.g.\ $f(x)=(w-x)_+^3$ for some $w\in\R$ and all $x\in\R$.  
Thus, 
\begin{equation*}
	G(t)=\E(w-Y^{m_{1+t},s_{1+t}}-Y^{m_{2-t},s_{2-t}})_+^3, 
\end{equation*}
where $(m_u,s_u):=P_u$ for any $u$. 
If $w\le0$ then $G(t)=0$ for all $t$, so that there is nothing to prove. 
Therefore, by rescaling, w.l.o.g.\ $w=1$. 
So, in view of Definition~\ref{def:Y}, $G(t)$ can be expressed in terms of the variables $t$, $a$, $p$, $b$, $q$ only, where 
\begin{equation}\label{eq:a,b,p,q:=}
	a:=\frac{s_1}{m_1}>0, \quad b:=\frac{s_2}{m_2}>0, \quad 
	p:=\frac{m_1^2}{s_1}\in(0,1], \quad q:=\frac{m_2^2}{s_2}\in(0,1].  
\end{equation}
By the symmetry relation 
$G_{P_1,P_2}(t)=G_{P_2,P_1}(t)$ (and continuity), w.l.o.g.\ $0<b<a$, so that $0<b<a<a+b$. 
Thus, it suffices to consider the following four cases: 
\begin{description}
	\item[$(C_0)$] $1\in(a+b,\infty)$; 
	\item[$(C_1)$] $1\in(a,a+b)$; 
	\item[$(C_2)$] $1\in(b,a)$; 
	\item[$(C_3)$] $1\in(0,b)$; 
\end{description}
at that, with each case it is assumed $0<b<a$ and $0<p,q<1$.  
In each of the cases $(C_k)$ with $k\in\{0,1,2,3\}$, the expression 
\begin{equation}\label{eq:D_k}
	D_k:=A_kG'(0), 
\end{equation}
is a polynomial in $a,b,p,q$,  
where 
\begin{equation}\label{eq:A_k}
	\text{$A_0:=1$, $A_1:=A_3:=a^2b^2$, and $A_2:=a^2$.} 
\end{equation}
Therefore, to finish the proof of inequality \eqref{eq:G'(0)>0} and thus that of Lemma~\ref{lem:schur}, it remains to verify the following lemma. 
\end{proof}

\begin{lemma}\label{lem:G'(0)>0}
In each of the cases $(C_k)$ with $k\in\{0,1,2,3\}$, the polynomial $D_k$ in $a,b,p,q$, defined by \eqref{eq:D_k} and \eqref{eq:A_k}, is nonnegative for all $p$ and $q$ in $(0,1)$. 
\end{lemma}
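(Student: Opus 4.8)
The plan is to reduce the claimed nonnegativity of each $D_k$ to a purely elementary verification about a low-degree polynomial in the four variables $a,b,p,q$ restricted to a box. First I would make the reduction completely explicit: since $Y^{m_u,s_u}$ takes only the two values $0$ and $s_u/m_u$, the r.v.\ $Y^{m_{1+t},s_{1+t}}+Y^{m_{2-t},s_{2-t}}$ takes only the four values $0$, $a_t:=s_{1+t}/m_{1+t}$, $b_t:=s_{2-t}/m_{2-t}$, $a_t+b_t$, with probabilities that are explicit quadratic functions of $t$ through the parameters $p,q$ of Definition~\ref{def:Y}. Hence $G(t)=\E(1-Y^{m_{1+t},s_{1+t}}-Y^{m_{2-t},s_{2-t}})_+^3$ is, on a neighborhood of $t=0$ in which the ordering of $1$ relative to $a_t,b_t,a_t+b_t$ does not change (this is exactly what the case split $(C_0)$–$(C_3)$ guarantees, since at $t=0$ we have $a_0=a$, $b_0=b$, $a_0+b_0=a+b$ and strict inequalities are assumed), a finite sum of terms $(\text{prob})\cdot(1-\text{value})_+^3$, each of which is a smooth — in fact rational — function of $t$. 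Differentiating at $t=0$ and clearing the denominators $A_k$ from \eqref{eq:A_k} turns $A_kG'(0)$ into the polynomial $D_k$ in $a,b,p,q$. This part is routine bookkeeping; the point of multiplying by $A_k$ is precisely to remove the powers of $a$ and $b$ appearing in denominators after differentiating $a_t^{-1}$-type expressions.

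Next I would exhibit each $D_k$ explicitly as a polynomial and check its sign on the relevant region. The region is: $0<b<a$, together with the case-specific location of $1$ (e.g.\ $a+b<1$ in $(C_0)$, $a<1<a+b$ in $(C_1)$, etc.), and $p,q\in(0,1)$; note that since $p=m_1^2/s_1$ and $q=m_2^2/s_2$ the constraint $p,q\in(0,1]$ is forced, and the cases of equality are handled by continuity, so it suffices to prove $D_k\ge0$ for $p,q\in(0,1)$. For each fixed admissible $(a,b)$, the map $(p,q)\mapsto D_k(a,b,p,q)$ is a polynomial of low degree (I expect degree at most $2$ in each of $p,q$ separately), so its minimum over the closed square $[0,1]^2$ is attained either at a corner or along an edge where a partial derivative vanishes. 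Thus the proof reduces to finitely many one- and zero-dimensional checks. At the corners $p,q\in\{0,1\}$ the expression should simplify dramatically (e.g.\ $p=1$ forces $m_1^2=s_1$, i.e.\ $Y^{m_1,s_1}$ is the constant $m_1=s_1/m_1$), and these limiting configurations typically make $G$ reduce to convexity-type inequalities that are manifestly true.

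I expect the main obstacle to be case $(C_1)$, where $1$ lies strictly between $a$ and $a+b$: here the value $a_t$ contributes nothing to the truncated third moment (since $a_t>1$ near $t=0$) while $a_t+b_t<1$ near $t=0$ in $(C_1)$? — one must be careful, as $(C_1)$ is $1\in(a,a+b)$, so $a_t<1<a_t+b_t$, meaning exactly two of the four atoms ($0$ and $b_t$) lie below the truncation level $1$ and the analysis of which atoms survive is the most delicate, producing the densest polynomial $D_1$ (reflected in the normalization $A_1=a^2b^2$). The strategy there is the same: write $D_1$ out, and on the slice of fixed $(a,b)$ with $a<1<a+b$, minimize the (at most biquadratic) polynomial in $(p,q)$ over $[0,1]^2$ by checking corners and critical edges; the worst corner should again correspond to a degenerate $Y$ and reduce to a convexity inequality for $x\mapsto(1-x)_+^3$ plus, possibly, an elementary inequality of the same flavor as \eqref{eq:de(infty)}. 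Throughout, the only structural input needed beyond bare computation is the two-point nature of the $Y$'s (Definition~\ref{def:Y}), the reduction to $f=f_{w,3}$ and $w=1$ already performed in the proof of Lemma~\ref{lem:schur}, and convexity of $f_{w,3}$ restricted to $(-\infty,w]$.
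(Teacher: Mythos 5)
There is a genuine gap, and it sits exactly where the real difficulty of the lemma lies. Your reduction step (two\-/point structure of the $Y$'s, piecewise-rational form of $G(t)$, clearing denominators by $A_k$) only re-derives the definition of $D_k$ in \eqref{eq:D_k}--\eqref{eq:A_k}; the content of the lemma is the polynomial nonnegativity itself, and your argument for it rests on the unjustified claim that, for fixed $(a,b)$, the minimum of the biquadratic $(p,q)\mapsto D_k$ over $[0,1]^2$ ``is attained either at a corner or along an edge.'' Being of degree $2$ in each of $p,q$ separately does not force the minimum to the boundary: e.g.\ $(p-\tfrac12)^2+(q-\tfrac12)^2$ is biquadratic with an interior minimum, and in fact the coefficients of $p^2$ and $q^2$ in $D_0$ are $6a^2(1-a)>0$ and $6b^2(1-b)>0$ in case $(C_0)$, so separate convexity cannot be the reason either. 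What is actually needed, and what the paper proves (Lemma~\ref{lem:0<p,q<1}), is that $D_k$ has no interior critical point that is a minimum, established by showing the Hessian determinant in $(p,q)$ is negative (saddle-likeness); for $k=1$ this is itself nontrivial, since the relevant quantity $\Det_1$ in \eqref{eq:det1} is a degree-six polynomial in $(a,b)$ whose negativity on $\om_1$ requires a separate argument (resultants plus boundary analysis, or a quantifier-elimination run). Your proposal contains no substitute for this step.

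The second underestimate is the boundary work. Even after pushing to $p\in\{0,1\}$ or $q\in\{0,1\}$, one is left with polynomial nonnegativity problems in three variables $(a,b,\text{and one of }p,q)$ over the two-dimensional regions \eqref{eq:om}, not with ``one- and zero-dimensional checks.'' Your expectation that these edge cases ``simplify dramatically'' into manifestly true convexity inequalities is contradicted by the paper's treatment: the subcases $q=1$ (and, in case $(C_1)$, $p=1$) require either further Hessian/convexity-in-$a$ or $b$ arguments with sign analyses of auxiliary polynomials (Lemmas~\ref{lem:D1} and \ref{lem:D2}), or chains of resultants followed by exact interval arithmetic to localize and evaluate the surviving critical points (Lemmas~\ref{lem:0q1}, \ref{lem:1q1}, \ref{lem:2q1}). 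In short, your outline follows the same broad route as the paper's second alternative proof (eliminate $(p,q)$ to the boundary of the square, then treat edges), but the two steps that make that route work --- the exclusion of interior $(p,q)$-extrema and the concrete verification on each edge over $\om_k$ --- are precisely the ones you assert rather than prove, so the proposal does not yet establish the lemma.
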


\begin{proof}[Proof of Lemma~\ref{lem:G'(0)>0}] 
For each $k$, $D_k$ is a polynomial and the conditions that define the case $C_k$ are polynomial (in fact, affine) inequalities. So, the verification that $D_k$ is nonnegative in each of the cases $C_k$ can be done in a completely algorithmic manner, due to the well-known    
Tarski theory \cite{tarski48,loja,collins98}. This theory is implemented in Mathematica via \texttt{Reduce} and other related commands.  
Thus, the Mathematica command \verb9Reduce[der0 < 0 && case0]9 \big(where \verb9der09 and \verb9case09 stand for $D_0$ and \big[$(C_0)\ \&\ 0<b<a\ \&\ p\in(0,1)\ \&\ q\in(0,1)$\big], respectively\big) outputs \verb9False9 (in about $0.3$ sec on a standard desktop), which means that indeed $D_0\ge0$ in the case $(C_0)$. 
Cases $(C_1)$, $(C_2)$, and $(C_3)$ can be treated quite similarly, with Mathematica execution times of about $5.4$ sec, $0.65$ sec, and $0.04$ sec, respectively. 
Details of the corresponding calculations can be found in the Mathematica notebook solution-tarsky.nb and its pdf copy solution-tarsky.pdf in the folder Mathematica in the zip file 
LeftTailBounds.zip posted at the SelectedWorks site    
\url{works.bepress.com/iosif-pinelis/7/download/}.

The symbols $\text{\tt{der0}, \dots, \tt{der3}}$ in our Mathematica notebooks correspond to $D_0,\dots,D_3$ defined by formulas \eqref{eq:D_k}--\eqref{eq:A_k} in the paper. 

This completes the proof of Lemma~\ref{lem:G'(0)>0}, which appears no less reliable than computations done ``by hand''; cf.\ e.g.\ the views of Okounkov \cite[page~35]{okounkov-interview}, Voevodsky \cite{voevodsky-SciAmer}, and Odlyzko \cite{odlyzko-review} on computer-assisted proofs.  
\end{proof}

However, as Okounkov \cite{okounkov-interview} notes in his interview, ``perhaps we should not
be dependent on commercial software here''. Indeed, details of the execution of the Mathematica command \verb9Reduce[]9 are not open to examination. 
Therefore, in addition to the above proof, in each of the next two sections an alternative proof of Lemma~\ref{lem:G'(0)>0} is provided. 

The proof of Lemma~\ref{lem:G'(0)>0} given in Section~\ref{alt1} relies, instead of the Mathematica command \texttt{Reduce}, on the Redlog package of the computer algebra system Reduce; both Reduce and Redlog are open-source and freely distributed (\url{http://www.redlog.eu/}). 

The proof of Lemma~\ref{lem:G'(0)>0} given in Section~\ref{alt2} uses only standard tools of calculus and also such a standard tool of algebra as the resultant, available in a number of open-source computer algebra software packages. 

Recall that, for each $k\in\{0,1,2,3\}$, 
$D_k$
is a polynomial in 
$a,b,p,q$. 
For each $k\in\{0,1,2,3\}$, in the case $(C_k)$, the quadruple $(a,b,p,q)$ belongs to the set 
\begin{equation}\label{eq:Om}
	\Om_k:=\om_k\times(0,1)^2, 
\end{equation}
where 
\begin{equation}\label{eq:om}
\begin{aligned}
	\om_0:=&\{(a,b)\in\R^2\colon 0<b<a<a + b<1\} \\ 
	&=\{(a,b)\in\R^2\colon b<a<a + b<1\}, \\ 
	\om_1:=&\{(a,b)\in\R^2\colon 0<b<a < 1 < a + b\} \\ 
					&=\{(a,b)\in\R^2\colon b<a < 1 < a + b\}, \\ 	
	\om_2:=&\{(a,b)\in\R^2\colon 0<b<1<a\}, \\ 	
	\om_3:=&\{(a,b)\in\R^2\colon 1<b<a\}.  	
\end{aligned}	
\end{equation}
For each $k\in\{0,1,2,3\}$, let $\bar\om_k$ denote the topological closure of $\om_k$, so that $\bar\om_k$ is defined by the system of non-strict inequalities corresponding to the strict inequalities defining the set $\om_k$. 

We shall use notation such as the following: 
\begin{equation}\label{eq:D_;}
	D_{k;p=\de}:=D_k\big|_{p=\de},\quad D_{k;q=\vp}:=D_k\big|_{q=\vp},\quad D_{k;p=\de,q=\vp}:=D_k\big|_{p=\de,q=\vp};  
\end{equation}
sometimes in such notation we shall use, instead of $D_k$, a modified version $\tD_k$ of $D_k$, which differs from $D_k$ by a factor which is manifestly positive in the corresponding context. 

\begin{center}
\framebox{
\parbox{3.5in}
{
In files pertaining to the mentioned package Redlog and in subsequently used Mathematica notebooks, we shall use notations 
such as $\text{\tt{der0}},\dots,\text{\tt{der3}},\text{\tt{der0p0}},\dots,\text{\tt{der3p1q1}}$ for $D_0,\dots,D_3,D_{0;p=0},\dots,D_{3;p=1,q=1}$, respectively, or possibly for $\tD_\cdot$ in place of $D_\cdot$. 
}
}	
\end{center}
 
\section{First alternative proof of \texorpdfstring{Lemma~\ref{lem:G'(0)>0}}{L3.2}}\label{alt1}
Unfortunately, for polynomials in several variables the mentioned package Redlog is either much slower than Mathematica (as in the cases of the polynomials $D_0$ and $D_3$ in \eqref{eq:D_k}) or unable to complete the desired verification of the nonnegativity (as in the cases of the polynomials $D_1$ and $D_2$ in \eqref{eq:D_k}%
). I have also tried another well-known open-source program, QEPCAD B (Quantifier Elimination by Partial Cylindrical Algebraic Decomposition, Version B), but it crashes 
even where Redlog eventually produces the result. 

More specifically, Redlog verifies the nonnegativity of the polynomials $D_0$ and $D_3$ (in cases $(C_0)$ and $(C_3)$) in about $107$ min and $0.45$ sec, respectively; details on this can be found in the .log files der0.log and der3.log 
and in the corresponding .png files der0.png and der3.png.  
\begin{center}
\framebox{
\parbox{4.5in}
{
The .log and .png files mentioned in this section are in the folder \break Reduce(Redlog) in the zip file 
LeftTailBounds.zip at the SelectedWorks site  \url{%http://works.bepress.com/iosif-pinelis/
works.bepress.com/iosif-pinelis/7/download/}.   
}
}	
\end{center}
 
These execution times, $107$ min and $0.45$ sec, may be compared with the corresponding ones for Mathematica, mentioned in the proof of Lemma~\ref{lem:G'(0)>0} in the preceding section: $0.3$ sec and $0.04$ sec). 

To verify the nonnegativity of the polynomials $D_1$ and $D_2$ with Redlog, each of these two verification problems has to be reduced, by a human, to a series (or rather a tree) of simpler problems, as presented below.  

\begin{lemma}\label{lem:D1}
In the case $(C_1)$, the polynomial $D_1$ in $a,b,p,q$ is nonnegative for all $p$ and $q$ in $(0,1)$ -- that is, $D_1\ge0$ for all $(a,b,p,q)\in\Om_1$.
\end{lemma}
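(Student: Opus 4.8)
The plan is to eliminate the parameters $p$ and $q$ by hand, reducing ``$D_1\ge0$ on $\Om_1$'' to a short list of polynomial inequalities in the two remaining variables $a,b$ over pieces of $\bar\om_1$, each of which Redlog (or, where Redlog is too slow, the calculus-plus-resultant method of Section~\ref{alt2}) can settle. First I would write $D_1$ out explicitly: in case $(C_1)$, where $b<a<1<a+b$, two of the four terms making up $G(t)$ vanish identically for $t$ near $0$ (their bracketed arguments being negative there), and differentiating the surviving two at $t=0$ and multiplying by the manifestly positive factor $A_1=a^2b^2$ (see \eqref{eq:D_k}, \eqref{eq:A_k}) produces $D_1$ as a polynomial in $a,b,p,q$ that is jointly of degree $2$ in $(p,q)$, its $(p,q)$-monomials lying among $1,p,q,p^2,q^2,pq$, with coefficients polynomial in $a,b$.

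Since $\Om_1=\om_1\times(0,1)^2$ and $D_1$ is continuous, it suffices to prove $D_1\ge0$ on $\bar\om_1\times[0,1]^2$; and for each fixed $(a,b)\in\bar\om_1$ the minimum of the quadratic $(p,q)\mapsto D_1$ over the closed square $[0,1]^2$ is attained either on one of the four edges $\{p=0\}$, $\{p=1\}$, $\{q=0\}$, $\{q=1\}$ or at the interior point solving $\nabla_{(p,q)}D_1=0$, the latter being relevant only when that point lies in $(0,1)^2$ and the (constant in $p,q$) Hessian of $D_1$ is positive semidefinite. I would split accordingly. On the edges, $D_{1;p=0}$, $D_{1;p=1}$, $D_{1;q=0}$, $D_{1;q=1}$ are quadratics in a single one of $p,q$ with coefficients in $a,b$, whose nonnegativity on $[0,1]$ is itself a one-variable quantifier elimination yielding polynomial conditions in $a,b$ (values at the two endpoints nonnegative, together with: leading coefficient $\le0$, or vertex outside $(0,1)$, or discriminant $\le0$). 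On the interior branch, solving the $2\times2$ linear system $\nabla_{(p,q)}D_1=0$ gives a stationary point $(p^*,q^*)$ rational in $a,b$, and substituting it back and clearing the sign-controlled denominator turns ``value at $(p^*,q^*)\ge0$'' into a polynomial inequality in $a,b$, to be imposed only over the semialgebraic region where the Hessian is positive definite and $(p^*,q^*)\in(0,1)^2$. Every resulting leaf then has the form ``$P(a,b)\ge0$ on a subset of $\bar\om_1$ cut out by affine inequalities'', which I would pass to Redlog --- using $\tD_1$-type positive renormalizations to keep degrees down --- and, where Redlog is slow, handle by fixing one of $a,b$, differentiating in the other, and isolating the finitely many critical abscissas via the resultant, as in Section~\ref{alt2}.

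The step I expect to be the main obstacle is the interior branch: back-substituting $(p^*,q^*)$ and clearing denominators can raise the degree in $a,b$ substantially, one must correctly pin down the sign of the Hessian determinant to know whether the stationary point is even a minimum, and one must then intersect ``Hessian positive definite, $p^*\in(0,1)$, $q^*\in(0,1)$'' with $\om_1$ before the final sign check; so keeping track of which of the three constraints $b<a$, $a<1$, $a+b>1$ defining $\om_1$ is active becomes delicate, since the sign of $D_1$ genuinely uses all three. Organizing the case tree so that each leaf stays within Redlog's practical range --- and double-checking the heavier leaves by the elementary resultant argument --- is where the real work lies; the algebra itself is routine.
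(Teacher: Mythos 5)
Your overall route is essentially the paper's: treat $D_1$ as a quadratic in $(p,q)$ with coefficients polynomial in $(a,b)$, reduce its minimization over the closed square $[0,1]^2$ to the four edges plus a possible interior critical point, and push the resulting two-variable leaves through Redlog, with a resultant/calculus fallback where Redlog stalls. The one substantive difference concerns the interior branch, which you single out as the main obstacle: the paper eliminates it outright by computing the Hessian determinant of $D_1$ in $(p,q)$, which after division by the positive factor $4a^3(a-b)^2b^3$ is the polynomial $\Det_1$ of \eqref{eq:det1} in $a,b$ alone, and checking with Redlog (in about half a second) that $\Det_1<0$ on $\om_1$. Thus $D_1$ is saddle-like in $(p,q)$, no interior point of the square can carry the minimum, and the back-substitution of the stationary point $(p^*,q^*)$ that you brace for never arises. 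Your plan in fact contains this observation in embryo: the feasibility check ``Hessian positive semidefinite over $\om_1$'' is exactly the sign condition on $\Det_1$, and it comes out empty, so recognizing this first collapses your hardest branch to a single cheap two-variable inequality. On the edges the paper's experience matches your expectations: $p=0$ and $q=0$ go through Redlog directly, while $p=1$ and $q=1$ require human-guided splitting --- convexity and monotonicity in $q$ for $D_{1;p=1}$ in \eqref{eq:D1;p=1}, and for $q=1$ a convexity-plus-discriminant case analysis according to $b<\tfrac12$ or $b>\tfrac12$, ending with the endpoint checks $p\in\{0,1\}$ --- which is precisely the kind of fallback you describe.

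One slip to correct before you compute: in case $(C_1)$ one has $b<a<1<a+b$, so only the term of $G(t)$ in which both $Y$'s take their nonzero values (its argument is near $1-a-b<0$) vanishes identically for $t$ near $0$; hence three of the four terms survive differentiation, not two (two survive in case $(C_2)$). This does not affect your verification scheme, since $D_1$ is the explicit polynomial \eqref{eq:D_1} fixed by \eqref{eq:D_k}--\eqref{eq:A_k}, but differentiating only two surviving terms would produce the wrong polynomial.
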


\begin{proof}
Assume indeed in this proof that $(a,b,p,q)\in\Om_1$, unless otherwise stated. 
One has 
\begin{multline}\label{eq:D_1}
	D_1=a^5 b^2 p - 2 a^4 b^3 p + a^3 b^4 p + a^4 p^2 - 3 a^5 p^2 + 
 3 a^6 p^2 - a^7 p^2 \\ 
 - 2 a^3 b p^2 + 6 a^4 b p^2 - 6 a^5 b p^2 + 
 2 a^6 b p^2 + 3 a^3 b^2 p^2 - 3 a^4 b^2 p^2 \\ 
 + 2 a^4 b^3 p^2 - 
 a^3 b^4 p^2 + a^4 b^3 q - 2 a^3 b^4 q + a^2 b^5 q + 2 a^2 b^2 p q \\ 
 - 6 a^3 b^2 p q + 6 a^4 b^2 p q - 2 a^5 b^2 p q - 6 a^2 b^3 p q + 
 6 a^2 b^4 p q - 2 a^2 b^5 p q \\ 
 - 2 a b^3 q^2 + 3 a^2 b^3 q^2 - 
 a^4 b^3 q^2 + b^4 q^2 + 6 a b^4 q^2 - 3 a^2 b^4 q^2 + 
 2 a^3 b^4 q^2 \\ 
 - 3 b^5 q^2 - 6 a b^5 q^2 + 3 b^6 q^2 + 2 a b^6 q^2 - 
 b^7 q^2. 
\end{multline} 
Consider  
\begin{multline}\label{eq:det1}
	\frac{\pd p^2 D_1\,\pd q^2 D_1-(\pd p\pd q D_1)^2}{4 a^3 (a - b)^2 b^3} = \Det_1 \\
	:=-2 + 9 a - 15 a^2 + 10 a^3 - 3 a^5 + a^6 + 9 b - 33 a b + 48 a^2 b - 
 36 a^3 b + 15 a^4 b \\ 
 - 3 a^5 b - 15 b^2 + 48 a b^2 - 54 a^2 b^2 + 
 24 a^3 b^2 - 3 a^4 b^2 + 10 b^3 - 36 a b^3 + 24 a^2 b^3 \\ 
 - 7 a^3 b^3 + 15 a b^4 - 3 a^2 b^4 - 3 b^5 - 3 a b^5 + b^6;  	
\end{multline} 
here and in the sequel, $\pd\al$ denotes, as usual, the partial differentiation in $\al$. 
Using the mentioned package Redlog, we see that $\Det_1<0$ on $\om_1$; this takes about $0.5$ sec; see details in the  files der1det.log 
and 
der1det.png.  

Hence, the determinant of the Hessian matrix of $D_1$ with respect to $p$ and $q$ is negative for all $(a,b,p,q)\in\Om_1$. 
It follows that $D_1$ is saddle-like in $p$ and $q$, and so, for each fixed $(a,b)\in\om_1$, the minimum of the polynomial $D_1$ in $(p,q)\in[0,1]^2$ is not attained at any point $(p,q)\in(0,1)^2$; therefore, this minimum is attained at some point $(p,q)$ on the boundary of the unit square $[0,1]^2$. 

Consider then each of the four boundary subcases of Case~1: $p=0$, $p=1$, $q=0$, and $q=1$. 
Using Redlog, we see that 
$D_{1;p=0}\ge0$ for $(a,b,q)\in\om_1\times(0,1)$ (execution time $\approx 0.25$ sec; details in the files der1p0.log 
and der1p0.png) and 
$D_{1;q=0}\ge0$ for $(a,b,p)\in\om_1\times(0,1)$ (execution time $\approx 0.25$ sec; details in the files der1q0.log 
and der1q0.png). 

The subcases $p=1$ and $q=1$ require more care. Recall notation \eqref{eq:D_;}. 

To consider the subcase $p=1$, 
assume that $(a,b)\in\om_1$ and $q\in(0,1)$. 
In view of \eqref{eq:D_1}, 
\begin{multline}\label{eq:D1;p=1}
D_{1;p=1}=a^3 (a - 3 a^2 + 3 a^3 - a^4 - 2 b + 6 a b - 6 a^2 b + 2 a^3 b + 3 b^2 - 
 3 a b^2 + a^2 b^2) \\
 + 
 a^2 b^2 (2 - 6 a + 6 a^2 - 2 a^3 - 6 b + a^2 b + 6 b^2 - 2 a b^2 - b^3) q \\
 - 
 b^3 (2 a - 3 a^2 + a^4 - b - 6 a b + 3 a^2 b - 2 a^3 b + 3 b^2 + 
    6 a b^2 - 3 b^3 - 2 a b^3 + b^4) q^2.  	
\end{multline}
Using Redlog, we see (in about $0.16$ sec) that  
\begin{equation}\label{eq:D002D1;p=1}
	D^{002}_{1;p=1} := \frac{\pd q^2 D_{1;p=1} }{2 b^3}
	=3 a^2 (1 - b) - (2 a - b) (1 - b)^3 + a^3 (2  b - a)\ge0 
\end{equation}
and (in about $0.8$ sec) that 
\begin{equation}\label{eq:D001D1;p=1}
\begin{gathered}
		D^{001}_{1;p=1,q=1} := \frac{\pd q D_{1;p=1}\big|_{q=1} }{b^2} \qquad\qquad\qquad\qquad\qquad\qquad\qquad\qquad\qquad\qquad \\
	=a^4 (6 - b - 2 a) - 2 (1 - b)^3 b (2 a - b) - 2 a^3 (3 - b^2) + 
 a^2 (2 - b^3)\le0   
\end{gathered}	
\end{equation}
(details in the files der1p1.log 
and der1p1.png; notations \tt{D002der1p1} and 
\break 
\tt{D001der1p1q1} there correspond to $D^{002}_{1;p=1}$ and $D^{001}_{1;p=1,q=1}$, respectively). 
So, $D_{1;p=1}$ is convex and decreasing in $q$. At that, 
\begin{equation}\label{eq:D1;p=1,q=1}
	D_{1;p=1}\big|_{q=1}=(a - b)^2[a^2 (1 - a)^3 + (1 - b)^3 b^2]>0.  
\end{equation}
We conclude that indeed $D_{1;p=1}\ge0$. 

To complete the proof of Lemma~\ref{lem:D1}, it remains to consider the subcase $q=1$. 
Expanding $D_{1;q=1}$ in powers of $p$, one has 
\begin{equation*}
	D_{1;q=1}=\psi(p):=A p^2 + B p + C, 
\end{equation*}
where 
\begin{align}
	A&:=-a^3 (a^4-2 a^3 b-3 a^3+6 a^2 b+3 a^2-2 a b^3+3 a b^2-6 a b-a+b^4 \notag \\ 
	&\qquad\qquad\qquad\qquad\qquad\qquad\qquad\qquad\qquad\qquad\qquad\qquad -3 b^2+2 b), \label{eq:A:-} \\ 
B&:=-a^2 b^2 (a^3+2 a^2 b-6 a^2-a b^2+6 a+2 b^3-6 b^2+6 b-2), \notag \\ 
	C&:=b^3 (a^2 b^2-3 a^2 b+3 a^2+2 a b^3-6 a b^2+6 a b-2 a-b^4+3 b^3-3 b^2+b). \notag 
\end{align}
Let 
\begin{align*}
	\text{discr}:=&\frac{B^2 - 4 A C}{a^3 b^3 (a - b)^2} \\ 
	=&8 - 36 a + 60 a^2 - 44 a^3 + 12 a^4 - 36 b + 132 a b - 180 a^2 b + 
 108 a^3 b - 24 a^4 b \\
 &+ a^5 b + 60 b^2 - 192 a b^2 + 192 a^2 b^2 - 84 a^3 b^2 + 10 a^4 b^2 - 40 b^3 + 144 a b^3 \\ 
 &- 84 a^2 b^3 + 
 21 a^3 b^3 - 60 a b^4 + 12 a^2 b^4 + 12 b^5 + 12 a b^5 - 4 b^6, \\ 
 d_1:=&\frac{\psi'(0)}{a^2 b^2}=\frac B{a^2 b^2}=2 - 6 a + 6 a^2 - a^3 - 6 b - 2 a^2 b + 6 b^2 + a b^2 - 2 b^3, \\ 
 d_2:=&\frac{\psi'(1)}{a^2 (a-b)}=\frac{2A+B}{a^2 (a-b)}  
 =2 a - 6 a^2 + 6 a^3 - 2 a^4 - 2 b + 6 a b - 6 a^2 b + 2 a^3 b \\
 &\qquad\qquad\qquad\qquad\qquad\qquad\qquad + 6 b^2 - 6 a b^2 + a^2 b^2 - 6 b^3 + 3 a b^3 + 2 b^4.  
\end{align*}
Note that discr equals in sign the discriminant of the quadratic polynomial $\psi(p)$. Therefore, $\text{discr}>0$ if and only if $\psi(p)$ takes both positive and negative values as $p$ varies from $-\infty$ to $\infty$. 
Using Redlog, we see that (i) $A\ge0$ ($\approx 0.16$ sec execution time); 
(ii) the conjunction of the conditions $\text{discr}>0$, $d_1<0$, and $b<1/2$ never takes place over the set $\om_1$ ($\approx 3.1$ sec execution time); and (iii) the conjunction of the conditions $\text{discr}>0$, $d_2>0$, and $b>1/2$ never takes place over the set $\om_1$ ($\approx 25.5$ min execution time); details are in the files der1q1.log, der1q1-top.png (for the first 10 Redlog commands), and der1q1-bottom.png (for the last 10 Redlog commands); in those files, \tt{AA} stands for $A/a^3$, with $A$ as in \eqref{eq:A:-}. 
So, over the set $\om_1$ one has the following: (i') the function $\psi$ is convex; (ii') if $b<1/2$ and $\psi$ changes sign over $\R$, then $\psi'(0)\ge0$ and hence $\psi(p)$ is nondecreasing in $p\in[0,1]$; and (iii') if $b>1/2$ and $\psi$ changes sign over $\R$, then $\psi'(1)\le0$ and hence $\psi(p)$ is nonincreasing in $p\in[0,1]$. 
Thus, in view of the continuity of $D_{1;q=1}=\psi(p)$ in $b$, it remains to verify that $\tD_{1;q=1,p=0}:=\psi(0)/b^3=C/b^3$ and $\tD_{1;q=1,p=1}=\psi(1)/(a - b)^2=(A+B+C)/(a - b)^2$ are both nonnegative (over $\om_1$). For $\tD_{1;q=1,p=0}$ this is checked by Redlog in about $0.05$ sec (details in files der1q1.log and der1q1-bottom.png), whereas $\tD_{1;q=1,p=1}=a^2 (1 - a)^3 + b^2 (1 - b)^3$ is manifestly positive (over $\om_1$). 

This completes the proof of Lemma~\ref{lem:D1}. 
\end{proof}



\begin{lemma}\label{lem:D2}
In the case $(C_2)$, the polynomial $D_2$ in $a,b,p,q$ is nonnegative for all $p$ and $q$ in $(0,1)$ -- that is, $D_2\ge0$ for all $(a,b,p,q)\in\Om_1$.
\end{lemma}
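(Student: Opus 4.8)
The plan is to follow closely the proof of Lemma~\ref{lem:D1}. First I would write $D_2=A_2G'(0)$ out explicitly as a polynomial in $a,b,p,q$, specializing the general computation of $G'(0)$ to case $(C_2)$, in which $0<b<1<a$, so that $(a,b)\in\om_2$. As before, the idea is to study $D_2$ as a function of the pair $(p,q)$ for each fixed $(a,b)\in\om_2$ and to locate its minimum over the square $[0,1]^2$, since by continuity it is enough to treat $(p,q)$ in the open square.

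The key step is to show that $D_2$ is saddle-like in $(p,q)$ throughout $\om_2$. Concretely, I would compute the Hessian determinant
\[
	\pd p^2 D_2\,\pd q^2 D_2-(\pd p\pd q D_2)^2,
\]
factor out a manifestly positive quantity to obtain a polynomial $\Det_2$ in $a,b$ alone, and verify with Redlog that $\Det_2<0$ on $\om_2$. Since then the Hessian of $D_2$ in $(p,q)$ is indefinite for every $(a,b)\in\om_2$, the minimum of $D_2$ over $[0,1]^2$ cannot be attained at an interior point, so it is attained on the boundary of the unit square. It therefore suffices to check the four edge subcases $p=0$, $p=1$, $q=0$, $q=1$.

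For the two ``easy'' edges --- I expect these to be $p=0$ and $q=0$, on which $D_2$ degenerates to a low-degree polynomial --- I would simply verify nonnegativity over $\om_2\times(0,1)$ with a direct Redlog call. The remaining edges $p=1$ and $q=1$ are, as in Lemma~\ref{lem:D1}, where the real work lies: on each of them $D_2$ restricts to a polynomial $\psi$ of degree at most $2$ in the surviving variable. Writing $\psi(p)=Ap^2+Bp+C$ (or the analogue in $q$), I would (i) check the sign of the leading coefficient to get convexity or concavity; (ii) examine the discriminant $B^2-4AC$ together with the signs of the one-sided derivatives $\psi'(0)=B$ and $\psi'(1)=2A+B$, splitting further on an auxiliary inequality such as $b<1/2$ versus $b>1/2$ if necessary, so as to conclude in each sub-subcase that $\psi$ is monotone on $[0,1]$; and (iii) reduce to the two endpoint values $\psi(0)$ and $\psi(1)$, each of which is either certified nonnegative by a short Redlog run or, after dividing out a positive factor, visibly of the form $a^2(1-a)^3+b^2(1-b)^3$ or similar. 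Each of these finitely many assertions is a Tarski-decidable statement about a polynomial over a semialgebraic piece of $\Om_2$, hence amenable to Redlog, and the outputs are recorded (as in the previous sections) in the accompanying \texttt{.log} and \texttt{.png} files.

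The main obstacle I anticipate is not mathematical but computational: finding a decomposition of $\Om_2$ fine enough that every leaf Redlog call terminates in a practical time --- recall that one leaf in the proof of Lemma~\ref{lem:D1} already took about $25$ minutes, and here $\om_2$ is unbounded in $a$, which may force an extra split (for instance on $a$ versus $1+b$, or on $\tD_{2;\dots}$ in place of $D_{2;\dots}$) to keep the quantifier elimination tractable. But because the strategy always drives each edge down to the one-variable, degree-at-most-$2$ situation and then to explicit corner values, such a decomposition is guaranteed to exist, and the proof is completed by assembling the resulting chain of Redlog verifications.
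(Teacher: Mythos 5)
Your strategy is mathematically sound, but it is not the route the paper takes for Lemma~\ref{lem:D2}. The paper's proof in Section~\ref{alt1} makes no reduction in $(p,q)$ at all: it verifies with Redlog that $\tfrac12\,\pd a^2 D_2\ge0$ on $\Om_2$ (so $D_2$ is convex in $a$), and that both $D_{2;a=1}\ge0$ and $\pd a D_2\big|_{a=1}\ge0$ for $(b,p,q)\in(0,1)^3$; since $\om_2$ forces $a>1$, convexity in $a$ together with a nonnegative value and slope at $a=1$ gives $D_2\ge0$ on all of $\Om_2$ in just three queries over a bounded box, neatly disposing of the unboundedness in $a$ that you worry about. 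Your proposal instead mirrors the treatment of $D_1$ and, in effect, the paper's second alternative proof (Section~\ref{alt2}): there the Hessian determinant of $D_2$ in $(p,q)$ indeed factors as $-4a^3(a-b)b^2(3-3b+b^2)\,[(1-b)(3-b)+(a-1)(6-4b)]<0$ on $\Om_2$ (case $k=2$ of Lemma~\ref{lem:0<p,q<1}), so the reduction to the four edges of the $(p,q)$-square is valid, and the edges $p=0$, $p=1$, $q=0$ are easy exactly as you anticipate (Lemmas~\ref{lem:2p0}--\ref{lem:2q0}). The trade-off is that the hard edge $q=1$ remains genuinely laborious: in the paper it is Lemma~\ref{lem:2q1}, handled by compactifying via $a\mapsto1/a$ and a resultant-based critical-point analysis rather than by your quadratic-in-$p$ scheme; note in particular that the leading coefficient $a^3(3-3a+2ab-b^2)$ of $D_{2;q=1}$ in $p$ changes sign over $\om_2$, so your step (ii) cannot always conclude monotonicity on $[0,1]$ and endpoint checks alone need not suffice --- in the convex subcase with positive discriminant you may have to certify the vertex value or find a different split, so the claim that your particular decomposition is ``guaranteed to exist'' is an overstatement (only Tarski decidability of each statement is guaranteed). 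In short: your approach works and is essentially the paper's Section~\ref{alt2} argument specialized to $k=2$, buying small, structured subproblems at the price of a difficult $q=1$ edge and unbounded $a$ in every query; the paper's Section~\ref{alt1} proof buys brevity and a bounded search region via convexity in $a$, at the price of one long ($\approx22.6$ min) Redlog run.
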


\begin{proof}
Assume indeed in this proof that $(a,b,p,q)\in\Om_2$, unless otherwise stated. 
One has 
\begin{multline}\label{eq:D_2}
	D_2=a^2 p - 3 a^3 p + 3 a^4 p - 2 a^4 b p + a^3 b^2 p + 3 a^3 p^2 - 
 3 a^4 p^2 + 2 a^4 b p^2 - a^3 b^2 p^2 - 2 a b q \\ 
 + 3 a^2 b q + 
 b^2 q - 3 a^2 b^2 q + a^2 b^3 q - 6 a^2 b p q + 6 a^2 b^2 p q - 
 2 a^2 b^3 p q + 6 a b^2 q^2 - 3 b^3 q^2 - 6 a b^3 q^2 \\ 
 + 3 b^4 q^2 + 
 2 a b^4 q^2 - b^5 q^2. 
\end{multline} 
Using Redlog (details in the files der2.log, der2-top.png, and der2-bottom.png),   
we see that  
\begin{multline}\label{eq:der2DDa}
	\tfrac12\,\pd a^2 D_2
	=p - 9 a p + 18 a^2 p - 12 a^2 b p + 3 a b^2 p + 9 a p^2 - 
 18 a^2 p^2 + 12 a^2 b p^2 - 3 a b^2 p^2 \\
 + 3 b q - 3 b^2 q + b^3 q - 
 6 b p q + 6 b^2 p q - 2 b^3 p q\ge0  	
\end{multline} 
on $\Om_2$ (execution time $\approx22.6$ min) -- so that $D_2$ is convex in $a$, 
\begin{multline}\label{eq:der2a1}
	D_{2;a=1}
	=p - 2 b p + b^2 p + 2 b p^2 - b^2 p^2 + b q - 2 b^2 q + b^3 q - 
 6 b p q + 6 b^2 p q - 2 b^3 p q + 6 b^2 q^2 \\ 
 - 9 b^3 q^2 + 5 b^4 q^2 -
  b^5 q^2\ge0  	
\end{multline} 
for $b,p,q$ in $(0,1)$ (execution time $\approx1.2$ sec), and 
\begin{multline}\label{eq:Dder2a1}
	\pd a D_2\big|_{a=1}
	=5 p - 8 b p + 3 b^2 p - 3 p^2 + 8 b p^2 - 3 b^2 p^2 + 4 b q - 
 6 b^2 q + 2 b^3 q - 12 b p q + 12 b^2 p q \\
 - 4 b^3 p q + 6 b^2 q^2 - 
 6 b^3 q^2 + 2 b^4 q^2\ge0  	
\end{multline} 
for $b,p,q$ in $(0,1)$ (execution time $\approx1.2$ sec);   
the symbols \tt{der2DDa}, \tt{der2a1}, and \tt{Dder2a1} in the mentioned Redlog files stand for $\tfrac12\,\pd a^2 D_2$, $D_{2;a=1}$, and $\pd a D_2\big|_{a=1}$, respectively.     
To complete the proof of Lemma~\ref{lem:D2}, it remains to recall the definition \eqref{eq:Om}. 
\end{proof}

Lemma~\ref{lem:G'(0)>0} follows immediately from Lemmas~\ref{lem:D1} and \ref{lem:D2} and the nonnegativity of $D_0$ and $D_3$, mentioned in the beginning of this section.

\section{Second alternative proof of \texorpdfstring{Lemma~\ref{lem:G'(0)>0}}{L3.2}}\label{alt2}
First here, let us briefly describe how to use the resultant tool in problems of polynomial optimization. Let $K[x_1,\dots,x_n]$ denote the ring of all polynomials in indeterminates $x_1,\dots,x_n$ over a field $K$; see e.g.\ \cite{vanderWaerden} for the algebraic terminology used in this description. 

In the case when $n=1$, this ring is written as $K[x]$, the ring of all polynomials in $x$ over $K$. Suppose now that $f(x)=a_0x^n+\dots+a_n$ and $g(x)=b_0x^n+\dots+b_n$ are two polynomials in $K[x]$. 
The resultant $R(f,g)=R_K(f(x),g(x))$ of these two polynomials is the determinant of the 
$(n+m)\times(n+m)$ Sylvester matrix 
$\big(a_{j-i} \ii{i\leq m} 
+b_{j-i+m} \ii{i>m}\big)_{i,j=1}^{n+m}$; here it is assumed that $a_k=0$ if \break 
$k\notin
\intr0n$ and $b_\ell=0$ if $\ell\notin\intr0m$. 
Thus, $R(f,g)$ is a homogeneous polynomial of degree $n+m$ in 
$a_0,\dots,a_n,b_0,\dots,b_m$. 
The Mathematica notebook resultant.nb and its pdf copy resultant.pdf show the Sylvester matrix for $n=3$ and $m=4$.    
\begin{center}
\framebox{
\parbox{4.5in}
{
The .nb files and their .pdf copies mentioned in this section are in the folder Mathematica in the zip file 
LeftTailBounds.zip at the SelectedWorks site   \url{%http://works.bepress.com/iosif-pinelis/
works.bepress.com/iosif-pinelis/7/download/}.   
}
}	
\end{center}

The remarkable property of the resultant is that $R(f,g)=0$ if and only if $a_0=b_0=0$ or the polynomials $f(x)$ and $g(x)$ have a common root, possibly in an algebraically closed field $C$ containing the field $K$; 
moreover, 
\begin{equation}\label{eq:R=}
	R(f,g)=A(x)f(x)+B(x)g(x)
\end{equation}
for some polynomials $A(x)$ and $B(x)$ in $K[x]$, whose coefficients are polynomials 
over $\Z$ in $a_0,\dots,a_n,b_0,\dots,b_m$;  
see e.g.\ \cite[Section~5.8]{vanderWaerden}. 

Take now any natural $n$ and let $f(x_1,\dots,x_n)$ and $g(x_1,\dots,x_n)$ be any polynomials in $K[x_1,\dots,x_n]$. These polynomials may be identified with the corresponding polynomials $f(x_1,\dots,x_{n-1})(x_n)$ and $g(x_1,\dots,x_{n-1})(x_n)$ in the ring $K(x_1,\dots,x_{n-1})[x_n]$ of all polynomials in the single indeterminate $x_n$ over the field $K(x_1,\dots,x_{n-1})$ of all rational functions in indeterminates $x_1,\dots,x_{n-1}$ over the field $K$. Thus, one has the resultant 
\begin{multline*}
	R_{x_n}(f,g)(x_1,\dots,x_{n-1})\\ 
	:=R_{K(x_1,\dots,x_{n-1})}\big(f(x_1,\dots,x_{n-1})(x_n),\,g(x_1,\dots,x_{n-1})(x_n)\big) 
\end{multline*}
of the polynomials $f(x_1,\dots,x_n)$ and $g(x_1,\dots,x_n)$ with respect to indeterminate $x_n$. 
Clearly, $R_{x_n}(f,g)(x_1,\dots,x_{n-1})\in K[x_1,\dots,x_{n-1}]$. 
Moreover, by \eqref{eq:R=}, 
\begin{equation}\label{eq:R_n=}
	R_{x_n}(f,g)(x_1,\dots,x_{n-1})=A(x_1,\dots,x_n)f(x_1,\dots,x_n)+B(x_1,\dots,x_n)g(x_1,\dots,x_n)
\end{equation} 
for some polynomials $A(x_1,\dots,x_n)$ and $B(x_1,\dots,x_n)$ in $K[x_1,\dots,x_n]$. 
So, if the polynomials $f(x_1,\dots,x_n)$ and $g(x_1,\dots,x_n)$ have a common root $(\al_1,\dots,\al_n)\in C^n$, then $R_{x_n}(f,g)(x_1,\dots,x_{n-1})$ has a root -- namely, $(\al_1,\dots,\al_{n-1})$ -- in $C^{n-1}$. 

Consider now a system of $n$ polynomial equations 
\begin{equation*}
\text{$f_j(x_1,\dots,x_n)=0$ for all $j\in\intr1n$, }	
\end{equation*}
in the $n$ indeterminates $x_1,\dots,x_n$, where $f_j(x_1,\dots,x_n)\in K[x_1,\dots,x_n]$ for each $j$. If this system has a root $(\al_1,\dots,\al_n)\in C^n$, then the reduced system of the $n-1$ polynomial equations 
\begin{equation}\label{eq:R_n}
\text{$R_{x_n}(f_j,f_{j+1})(x_1,\dots,x_{n-1})=0$ for  $j\in\intr1{n-1}$, }	
\end{equation}
in the $n-1$ indeterminates $x_1,\dots,x_{n-1}$, has a root -- namely, $(\al_1,\dots,\al_{n-1})$ -- in $C^{n-1}$. Thus, the indeterminate $x_n$ has been eliminated. Continuing in this manner, one arrives at one equation of the form $F_1(x_1)=0$, for some polynomial $F_1(x_1)\in K[x_1]$. Quite similarly one obtains polynomial equations $F_j(x_j)=0$ for each $j\in\intr2n$. In the ``nondegenerate'' case -- when all the polynomials $F_1(x_1),\dots,F_n(x_n)$ are nonzero -- each of the resulting equations $F_1(x_1)=0,\dots,F_n(x_n)=0$ has only finitely many roots (in $C$ and hence in $K$). One can then check which of the finitely many $n$-tuples of those roots are roots of the original system $f_j(x_1,\dots,x_n)=0$ for all $j=1,\dots,n$. Thus, one can see that, at least in the ``nondegenerate'' case, resultants can be used to solve systems of polynomial equations by successive elimination, somewhat similarly to solving systems of linear equations. 

In the remaining, ``degenerate'' case, other, more computationally intensive tools of algebraic geometry need to be used, such as the calculation of a Gr\"obner basis, which, in particular, allows one to determine the dimension of an algebraic variety; see e.g.\ \cite[Ch.\ 9]{cox-little-oshea}. One may hope, though, that such a degeneracy is unlikely to occur in a particular problem. Also, if a degeneracy indeed occurs, one may turn to using other methods, say ones of calculus if the field $K$ is $\R$. 

Suppose now, in the case when (say) $K=\R$, one wants to show, as we do in the proof of Lemma~\ref{lem:G'(0)>0}, that a polynomial $f(x_1,\dots,x_n)$ is nonnegative everywhere on a (say) compact subset of $\R^n$ of the form 
\begin{equation*}
	\Om:=\{(x_1,\dots,x_n)\in\R^n\colon g_i(x_1,\dots,x_n)\ge0\ \forall i\in\intr1m\}, 
\end{equation*}
for some natural $m$ and some nonzero polynomials $g_i(x_1,\dots,x_n)$ in \break  $\R[x_1,\dots,x_n]$. First here, if the minimum of $f(x_1,\dots,x_n)$ over $(x_1,\dots,x_n)\in\Om$ is attained at an interior point $(\al_1,\dots,\al_n)$ of $\Om$, then this point is critical for $f$, that is, the partial derivatives, say $f_1,\dots,f_n$, of the function $f$ respectively in $x_1,\dots,x_n$ vanish at the point $(\al_1,\dots,\al_n)$. So, one can use the resultants, as described above, to obtain a finite set containing all the critical points of $f$ in the interior set 
\begin{equation*}
	\inter\Om=\{(x_1,\dots,x_n)\in\R^k\colon g_i(x_1,\dots,x_n)>0\ \forall i\in\intr1m\} 
\end{equation*}
of $\Om$ -- provided that the non-degeneracy holds. 
Similarly, for any set $J\subset\intr1m$, one can try to obtain a finite set containing all the critical points of $f$ in the interior set of the (in general curved) $J$-face 
\begin{equation*}
	\{(x_1,\dots,x_n)\in\Om\colon 
	g_i(x_1,\dots,x_n)=0\ \forall i\in J\}
	\end{equation*}
of the set $\Om$ -- by considering the polynomial system of equations involving the Lagrange multipliers; see e.g.\ \cite[page~434]{pourciau}. 
\big(If the faces of $\Om$ are subsets of affine subspaces of $\R^n$ -- as they are in the proof of Lemma~\ref{lem:G'(0)>0}, then the elimination of variables and hence the minimization of $f$ over the faces are much simpler.\big)
Thus, unless a degeneracy is encountered, one reduces the verification of the nonnegativity of the polynomial function $f$ on the set $\Om$ to that on a finite set.    

One can also try to use some of the various Positivstellens\"atze of real algebraic geometry (see e.g.\ \cite{krivine64a,krivine64b,cassier,handelman85,handelman88}), which can provide a so-called certificate of positivity to a polynomial that is indeed positive on a set defined by a system of polynomial inequalities (over $\R$); that is, by an appropriate Positivstellensatz, the positive polynomial can be represented as a polynomial (with positive coefficients) in simpler polynomials that are manifestly positive on the given set. 
This method was used successfully in \cite{pin-hoeff-published}. However, it does not appear to be very effective in the proof of of Lemma~\ref{lem:G'(0)>0} and will be used only little there. 

One can also use a combination of all these and/or other methods. 
In fact, we shall try to avoid, as much as we can, using resultants or other algebraic tools. Instead, we shall try to use, as much as possible, calculus tools such as monotonicity and concavity/convexity, which are oftentimes much more efficient in eliminating variables. 

Apparently any proof of Lemma~\ref{lem:G'(0)>0} will involve a very large amount of algebraic and arithmetic calculations; in particular, note the execution time of of about $5.4$ sec mentioned in the above proof of Lemma~\ref{lem:G'(0)>0}, based on Tarski's theory. Of course, for the proof to be valid, any arithmetic calculation needed therein must be carried out in an exact arithmetic. 

\begin{center}
	***
\end{center}

Let us now get specifically to the proof of Lemma~\ref{lem:G'(0)>0}. 
Recall definitions \eqref{eq:Om} and \eqref{eq:om}. 

The following lemma eliminates one of the variables $p,q$. 

\begin{lemma}\label{lem:0<p,q<1}
For each $k\in\{0,1,2,3\}$ and for each fixed $(a,b)\in\om_k$, the minimum of the polynomial $D_k$ in $(p,q)\in[0,1]^2$ is not attained at any point $(p,q)\in(0,1)^2$; hence, this minimum is attained at some point $(p,q)$ on the boundary of the unit square $[0,1]^2$. 
%
\end{lemma}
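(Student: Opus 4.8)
The plan is to reduce the lemma to the single algebraic assertion that, for each $k$, the Hessian of $D_k$ in the pair of variables $(p,q)$ is everywhere sign-indefinite on the region $\om_k$, and then to invoke the elementary fact that a quadratic function with an indefinite Hessian has no local, hence no interior global, minimum on a square. First I would record the structural fact that, for every $k\in\{0,1,2,3\}$, the polynomial $D_k=A_kG'(0)$ has degree at most $2$ \emph{jointly} in $(p,q)$; that is, every monomial $a^rb^sp^iq^j$ occurring in $D_k$ has $i+j\le2$. This is apparent by direct inspection of the explicit expansions --- see \eqref{eq:D_1} and \eqref{eq:D_2} for $k=1,2$, and the analogous expansions for $k=0,3$ --- and can also be seen structurally by writing $m_1=ap$, $s_1=a^2p$, $m_2=bq$, $s_2=b^2q$ (cf.\ \eqref{eq:a,b,p,q:=} and Definition~\ref{def:Y}) and differentiating $G(t)=\E(1-Y^{m_{1+t},s_{1+t}}-Y^{m_{2-t},s_{2-t}})_+^3$ at $t=0$: each surviving term is a product of one ``value'' factor (a power of a $(1-\cdot)_+$ expression, or its $t$-derivative) and one ``probability'' factor that is bilinear in $(p,q)$, and the apparently higher cross-terms cancel in the sum. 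Consequently, for each fixed $(a,b)$ the Hessian $H_k$ of $D_k$ in $(p,q)$ is a \emph{constant} symmetric $2\times2$ matrix whose entries are polynomials in $(a,b)$ only.

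The crux --- and the step I expect to be the main obstacle --- is to verify that
\begin{equation*}
	\Det H_k=\pd p^2 D_k\cdot\pd q^2 D_k-(\pd p\pd q D_k)^2<0\qquad\text{on }\om_k,\quad k\in\{0,1,2,3\}.
\end{equation*}
For $k=1$ this is exactly the inequality ``$\Det_1<0$ on $\om_1$'' already isolated in \eqref{eq:det1}, and the remaining three cases are of the same nature: in each, $\Det H_k$ is, up to a factor manifestly positive on $\om_k$, a fixed polynomial in the \emph{two} variables $(a,b)$ whose negativity on the polyhedrally described set $\om_k$ is to be checked --- either by the quantifier-elimination tools already employed (Redlog, or the Mathematica command \texttt{Reduce}), or by the elementary calculus-plus-resultant elimination described at the beginning of this section. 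Even though only two free variables remain, these polynomials have sizeable degree, and, as the treatment of $D_1$ and $D_2$ in Section~\ref{alt1} illustrates, some of the four cases may well require first splitting the region $\om_k$, or a careful by-hand monotonicity argument, before the verification goes through.

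Granting $\Det H_k<0$ on $\om_k$, the conclusion is immediate. Fix $k$ and $(a,b)\in\om_k$ and view $D_k$ as a quadratic function of $(p,q)\in\R^2$ with constant, indefinite Hessian $H_k$; then $D_k$ has a genuine saddle and no local minimum anywhere. In particular, at any point $(p_0,q_0)\in(0,1)^2$ one strictly decreases $D_k$ by moving a sufficiently small distance along an eigenvector of $H_k$ for its negative eigenvalue (staying inside $(0,1)^2$). Hence the minimum of $D_k$ over the compact square $[0,1]^2$ --- which exists by continuity --- is not attained in the open square, so it is attained on the boundary. This is precisely the assertion of Lemma~\ref{lem:0<p,q<1}.
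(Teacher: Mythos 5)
Your reduction is exactly the paper's: since each $D_k$ has joint degree at most $2$ in $(p,q)$ (its explicit expansions \eqref{eq:D_0}, \eqref{eq:D_1}, \eqref{eq:D_2}, \eqref{eq:D_3} contain only the monomials $p,q,p^2,pq,q^2$), the $(p,q)$-Hessian is constant for each fixed $(a,b)$, and negativity of its determinant rules out an interior minimizer of $D_k$ on $[0,1]^2$. That part of your argument is fine. The genuine gap is that the inequality on which everything rests, $\pd p^2 D_k\,\pd q^2 D_k-(\pd p\pd q D_k)^2<0$ on $\Om_k$, is nowhere established in your write-up: you defer it to ``Redlog/\texttt{Reduce} or calculus-plus-resultant elimination,'' and for $k=1$ you cite \eqref{eq:det1} as if the negativity were already available there, whereas \eqref{eq:det1} merely \emph{defines} $\Det_1$ (as the Hessian determinant divided by the positive factor $4a^3(a-b)^2b^3$); its sign still has to be proved.

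For $k=0,2,3$ the verification is immediate once the determinant is computed, because it factors into manifestly signed pieces on the respective regions: $-36\,a^2(a-b)^2b^2$, \ $-4\,a^3(a-b)b^2(3-3b+b^2)\,[(1-b)(3-b)+(a-1)(6-4b)]$, and $-8\,a^3(a-b)^2b^3$; you should at least record these, since your proposal does not. For $k=1$, however, $\Det_1$ is a degree-$6$ polynomial in $(a,b)$ admitting no such factorization, and proving $\Det_1<0$ on $\om_1$ is the real content of the lemma. The paper does this by showing that the maximum of $\Det_1$ over $\bar\om_1$ cannot be attained in $\om_1$ (the resultant $R_b(\pd a\Det_1,\pd b\Det_1)(a)$ has, by Sturm's theorem, no root in $(\tfrac12,1)$, while $(a,b)\in\om_1$ forces $a>\tfrac12$) and then checking $\Det_1\le0$ on the three boundary arcs $b=a$, $a=1$, $a=1-b$, where it reduces to one-variable polynomials of determinate sign. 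Since the lemma belongs to the section whose stated purpose is to replace black-box quantifier elimination by humanly verifiable calculus/resultant arguments, delegating exactly this step back to \texttt{Reduce} or Redlog would defeat the purpose; as written, your proposal is a correct plan whose central inequality remains unproven.
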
 

\begin{proof}[Proof of Lemma~\ref{lem:0<p,q<1}]
Details of calculations in this proof can be found in Mathematica notebook noInnerExtrIn\_pq.nb and its pdf copy 
noInnerExtrIn\_pq.pdf.  
One has 
\begin{multline}\label{eq:D_0}
	D_0=a^3 p - 2 a^2 b p + a b^2 p + 6 a^2 p^2 - 6 a^3 p^2 + a^2 b q - 
 2 a b^2 q + b^3 q \\ - 12 a b p q  
 + 6 a^2 b p q + 6 a b^2 p q + 
 6 b^2 q^2 - 6 b^3 q^2. 
\end{multline} 
Note that $D_0$ is saddle-like in $p$ and $q$, that is, 
the determinant 
\begin{equation*}
	\pd p^2 D_0\,\pd q^2 D_0-(\pd p\pd q D_0)^2
	=-36 a^2 (a - b)^2 b^2
\end{equation*}
of the Hessian matrix of $D_0$ with respect to $p$ and $q$ is negative for all $(a,b,p,q)\in\Om_0$. 
Hence, the case $k=0$ of Lemma~\ref{lem:0<p,q<1} follows. 

The other cases of Lemma~\ref{lem:0<p,q<1} are similar, except that the case $k=1$ is more complicated than the rest. Let us defer this case to the end of the proof of Lemma~\ref{lem:0<p,q<1} and consider the cases $k=2$ and $k=3$ next. 

Concerning $k=2$, recall \eqref{eq:D_2}. 
One has 
\begin{multline*}
	\pd p^2 D_2\,\pd q^2 D_2-(\pd p\pd q D_2)^2 \\ 
	=-4 a^3 (a - b) b^2 (3 - 3 b + b^2) [(1 - b) (3 - b) + (a - 1) (6 - 4 b)] <0
\end{multline*}
for all $(a,b,p,q)\in\Om_2$. Hence, the case $k=2$ of Lemma~\ref{lem:0<p,q<1} follows. 

Concerning $k=3$, one has 
\begin{multline}\label{eq:D_3}
	D_3=a^4 p - 2 a^3 b p + a^2 b^2 p - a^4 p^2 + 2 a^3 b p^2 + a^2 b^2 q \\ 
 -2 a b^3 q + b^4 q - 2 a^2 b^2 p q + 2 a b^3 q^2 - b^4 q^2
\end{multline} 
and 
\begin{equation*}
	\pd p^2 D_3\,\pd q^2 D_3-(\pd p\pd q D_3)^2  
	=-8 a^3 (a - b)^2 b^3 <0	
\end{equation*}
for all $(a,b,p,q)\in\Om_3$. Hence, the case $k=3$ of Lemma~\ref{lem:0<p,q<1} follows. 

It remains to consider the case $k=1$; further details on this, more difficult case are given in the files noInnerExtrIn\_pq-scratch.nb and \break 
noInnerExtrIn\_pq-scratch.pdf. Recall \eqref{eq:D_1} and \eqref{eq:det1}.  
%
It suffices to show that $\Det_1<0$ for all $(a,b,p,q)\in\Om_1$, that is, for all $(a,b)\in\om_1$ -- since $\Det_1$ does not depend on $p$ or $q$. To that end, we shall first show that the maximum of $\Det_1$ over $\bar\om_1$ is not attained at any point $(a,b)\in\om_1$. Indeed, otherwise 
\begin{align*}
	\De_{1,0}\Det_1:=&\tfrac13\,\pd a\Det_1 \\ 
	=&3 - 10 a + 10 a^2 - 5 a^4 + 2 a^5 - 11 b + 32 a b - 36 a^2 b + 
 20 a^3 b \\ 
 &- 5 a^4 b + 16 b^2 - 36 a b^2 + 24 a^2 b^2 - 4 a^3 b^2 - 
 12 b^3 + 16 a b^3 \\ 
 &- 7 a^2 b^3 + 5 b^4 - 2 a b^4 - b^5 \\
\intertext{and}
 	\De_{0,1}\Det_1:=&\tfrac13\,\pd b\Det_1 \\ 
	=&3 - 11 a + 16 a^2 - 12 a^3 + 5 a^4 - a^5 - 10 b + 32 a b - 36 a^2 b \\
	 &+ 16 a^3 b - 2 a^4 b + 10 b^2 - 36 a b^2 + 24 a^2 b^2 - 7 a^3 b^2 + 
 20 a b^3 \\
 &- 4 a^2 b^3 - 5 b^4 - 5 a b^4 + 2 b^5 
\end{align*}
would both vanish at that point $(a,b)$, and then so would the resultant 
\begin{align*}
R_b&(\De_{1,0}\Det_1,\De_{0,1}\Det_1)(a) \\ 
=&-243 (1 - a)^7 a^7 (-3 + 21 a - 58 a^2 + 84 a^3 - 60 a^4 + 17 a^5) \\ 
&\times(3168 - 13325 a + 
   23645 a^2 - 22308 a^3 + 11916 a^4 - 3456 a^5 + 432 a^6)  	
\end{align*}
of the polynomials $\De_{1,0}\Det_1$ and $\De_{0,1}\Det_1$ with respect to $b$. 
However, by Sturm's theorem (say), this resultant (which is a polynomial in one variable, $a$) does not have roots in the interval $(\frac12,1)$, whereas the condition $(a,b)\in\om_1$ implies $a\in(\frac12,1)$. 
This contradiction completes the verification that the maximum of $\Det_1$ over $\bar\om_1$ is not attained at any point of $\om_1$. 

Consider finally the values of $\Det_1$ on the boundary of $\om_1$. One has 
\begin{alignat*}{2}
	&\Det_1\big|_{b=a}&&=-2 + 18 a - 63 a^2 + 116 a^3 - 126 a^4 + 72 a^5 - 17 a^6<0\\
	&&&\qquad\qquad\qquad\qquad\qquad\qquad\qquad\qquad\qquad\qquad\text{ for }a\in(\tfrac12,1); \\ 
	&\Det_1\big|_{a=1}&&=b^3(-9 + 12 b - 6 b^2 + b^3)<0 
	\quad\text{ for }b\in(0,1); \\ 
	&\Det_1\big|_{a=1-b}&&=-9 (1-b)^3 b^3<0\quad\text{ for }b\in(0,1). 
\end{alignat*}
Thus, $\Det_1$ is no greater than $0$ on the boundary of $\om_1$ and does not attain its maximum over $\bar\om_1$ at any point $(a,b)\in\om_1$. It follows that $\Det_1<0$ for all $(a,b)\in\om_1$, that is, for all $(a,b,p,q)\in\Om_1$. 
Hence, the case $k=1$ of Lemma~\ref{lem:0<p,q<1} follows as well. 
\end{proof}

In view of Lemma~\ref{lem:0<p,q<1}, it remains to verify, for each $k\in\{0,1,2,3\}$, that $D_k\ge0$ for all $(a,b)\in\om_k$ and all $(p,q)\in[0,1]^2$ such that either $p\in\{0,1\}$ or $q\in\{0,1\}$. Thus, for each $k\in\{0,1,2,3\}$, one has to consider $4$ possibilities: $p=0$, $p=1$, $q=0$, and $q=1$, which results in $4\times4=16$ subcases. These 16 subcases will each be considered in one of the corresponding 16 lemmas below; some of these lemmas are very simple, and some are rather complicated. 
Recall notation \eqref{eq:D_;}. 
Details of calculations in the proofs of these 16 lemmas can be found in the corresponding Mathematica notebooks and their pdf copies. The names $\text{der0p0.nb (der0p0.pdf), \dots, der3q1.nb (der3q1.pdf)}$ of these notebooks (and pdf files) correspond to the polynomials $D_{0;p=0},\dots,D_{3;q=1}$, 
whose nonnegativity is stated and proved in these lemmas. 
E.g., the ``root'' $\text{der0p0}$ of the names of the files $\text{der0p0.nb}$ and  $\text{der0p0.pdf}$ with extensions $\text{.nb}$ and  $\text{.pdf}$ is obtained from the symbol $D_{0;p=0}$ by replacing there $D$ by der and removing ``;'' and ``$=$''. 


\begin{lemma}\label{lem:0p0}
$D_{0;p=0}\ge0$ for all $(a,b)\in\om_0$ and $q\in(0,1)$. 
\end{lemma}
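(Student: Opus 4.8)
The plan is to treat this as the $p=0$ boundary face of the saddle-in-$(p,q)$ analysis set up in Lemma~\ref{lem:0<p,q<1}: substitute $p=0$ into the explicit polynomial $D_0$ from \eqref{eq:D_0} and recognize the resulting polynomial in $a,b,q$ as a sum of terms that are manifestly nonnegative on the region in question. First I would set $p=0$ in \eqref{eq:D_0}; every monomial carrying a factor of $p$ disappears, leaving
\begin{equation*}
	D_{0;p=0}=a^2 b q-2 a b^2 q+b^3 q+6 b^2 q^2-6 b^3 q^2.
\end{equation*}

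Next I would factor this as
\begin{equation*}
	D_{0;p=0}=b q\,(a-b)^2+6\,b^2 q^2\,(1-b),
\end{equation*}
which is a one-line algebraic identity. Finally I would invoke the defining inequalities of $\om_0$, namely $0<b<a<a+b<1$ (see \eqref{eq:om}): these give $b>0$ and, since $a>0$ and $a+b<1$, also $1-b>0$; together with $q\in(0,1)$, hence $q>0$, and $(a-b)^2\ge0$, this shows that both summands in the displayed factorization are nonnegative. Therefore $D_{0;p=0}\ge0$ on $\om_0\times(0,1)$ (in fact strictly positive there).

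There is essentially no obstacle in this lemma; the only point requiring attention is the correctness of the sign pattern in the factorization, which is immediate to verify, and the observation that $a+b<1$ together with $a>0$ forces $b<1$ so that the factor $1-b$ is indeed positive on $\om_0$.
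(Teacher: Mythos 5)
Your proof is correct and essentially identical to the paper's: the paper writes the same factorization as $D_{0;p=0}=b q\,[(a-b)^2+6bq(1-b)]$ and concludes nonnegativity on $\om_0\times[0,1]$ immediately. The only (cosmetic) difference is that you distribute the common factor $bq$ across the two terms and spell out why $1-b>0$ on $\om_0$, which the paper leaves implicit.
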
 

\begin{proof}[Proof of Lemma~\ref{lem:0p0}] 
In view of \eqref{eq:D_0}, $D_{0;p=0}=b q [(a - b)^2 + 6 b q (1 - b)]$, which is manifestly nonnegative for all $(a,b)\in\om_0$ and $q\in[0,1]$.   
\end{proof}

\begin{lemma}\label{lem:0p1}
$D_{0;p=1}\ge0$ for all $(a,b)\in\om_0$ and $q\in(0,1)$. 
\end{lemma}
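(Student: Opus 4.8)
The plan is to view $D_{0;p=1}$ as a quadratic polynomial in the single variable $q$, with coefficients that are polynomials in $(a,b)$, and to show that on $\om_0$ this quadratic is decreasing on $[0,1]$; then its minimum over $q\in[0,1]$ is attained at the endpoint $q=1$, where it will turn out to be manifestly nonnegative.

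First I would put $p=1$ in \eqref{eq:D_0} and collect terms by powers of $q$, obtaining
\begin{equation*}
	D_{0;p=1}=6b^2(1-b)\,q^2+b\,(7a^2+4ab+b^2-12a)\,q+a(a-b)^2+6a^2(1-a).
\end{equation*}
On $\om_0$ the leading coefficient $6b^2(1-b)$ is positive (since $0<b<1$), so $D_{0;p=1}$ is convex in $q$, while the $q$-free term $a(a-b)^2+6a^2(1-a)$ is nonnegative (since $0<a<1$). Next I would differentiate in $q$ and evaluate at $q=1$, simplifying the outcome:
\begin{equation*}
	\pd q D_{0;p=1}\big|_{q=1}=12b^2(1-b)+b\,(7a^2+4ab+b^2-12a)=b\,(a-b)\,(7a+11b-12).
\end{equation*}
On $\om_0$ one has $b>0$ and $a-b>0$; moreover $b<\tfrac12$, because $2b<a+b<1$, so $7a+11b-12=7(a+b)+4b-12<7+2-12<0$. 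Hence $\pd q D_{0;p=1}\big|_{q=1}<0$, and since $\pd q D_{0;p=1}$ is affine in $q$ with positive leading coefficient $12b^2(1-b)$, it stays negative for all $q\in[0,1]$; thus $D_{0;p=1}$ is decreasing in $q$ on $[0,1]$.

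It then remains to evaluate at the right endpoint. A direct computation (which collapses thanks to the factorizations above) gives
\begin{equation*}
	D_{0;p=1}\big|_{q=1}=(a-b)^2\bigl(6-5(a+b)\bigr),
\end{equation*}
which is nonnegative on $\bar\om_0$ because there $a+b\le1<\tfrac65$. Combining this with the monotonicity in $q$ yields $D_{0;p=1}\ge D_{0;p=1}\big|_{q=1}\ge0$ for all $(a,b)\in\om_0$ and $q\in[0,1]$, which is the claim (and a bit more than the stated $q\in(0,1)$). I do not expect a real obstacle in this lemma: the only step that is not automatic is recognizing that the extremum of $D_{0;p=1}$ over $q\in[0,1]$ is at $q=1$, and that is settled by the one-line sign check $b(a-b)(7a+11b-12)<0$ on $\om_0$; the rest is an elementary algebraic identity and the evaluation of a manifestly nonnegative expression.
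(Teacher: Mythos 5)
Your proposal is correct and follows essentially the same route as the paper: both establish convexity of $D_{0;p=1}$ in $q$ via the leading coefficient $6b^2(1-b)>0$, check that $\pd q D_{0;p=1}\big|_{q=1}=b(a-b)\bigl(7(a+b)+4b-12\bigr)<0$ so the quadratic is decreasing on $[0,1]$, and conclude from $D_{0;p=1}\big|_{q=1}=(a-b)^2\bigl(6-5(a+b)\bigr)\ge0$. Your computations (including the factorizations) match the paper's, so there is nothing to add.
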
 

\begin{proof}[Proof of Lemma~\ref{lem:0p1}] 
Assume indeed in this proof that $(a,b)\in\om_0$ and $q\in(0,1)$, unless otherwise indicated. 
In view of \eqref{eq:D_0}, 
\begin{equation*}
D_{0;p=1}=	b q (7 a^2+4 a b-12 a+b^2)-a (5 a^2+2 a b-6 a-b^2)+6 (1-b) b^2 q^2. 
\end{equation*}
So, $\pd q^2 D_{0;p=1}=12 (1- b) b^2>0$, whence $D_{0;p=1}$ is convex in $q$. 
Next, $D_{0;p=1,q=1}=(a - b)^2 [6 - 5 (a + b)]>0$ and $\big(\pd q{D_{0;p=1}}\big)\big|_{q=1}=(a - b) b [4 b + 7 (a + b) - 12]<0$, since $a+b<1$. 
Thus, Lemma~\ref{lem:0p1} follows. 
%
\end{proof}

\begin{lemma}\label{lem:0q0}
$D_{0;q=0}\ge0$ for all $(a,b)\in\om_0$ and $p\in(0,1)$. 
\end{lemma}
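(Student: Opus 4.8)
The plan is straightforward: substitute $q=0$ into the explicit formula \eqref{eq:D_0} for $D_0$ and then factor the resulting polynomial in $p$. First, setting $q=0$ in \eqref{eq:D_0} annihilates every monomial containing $q$, so that
\[
	D_{0;q=0}=a^3 p - 2 a^2 b p + a b^2 p + 6 a^2 p^2 - 6 a^3 p^2.
\]
Next, I would collect the three $p$-linear terms into $a p\,(a-b)^2$ and the two $p$-quadratic terms into $6 a^2 p^2 (1-a)$, obtaining
\[
	D_{0;q=0}=a p\,(a-b)^2 + 6 a^2 p^2 (1-a).
\]

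Finally, recall from \eqref{eq:om} that $(a,b)\in\om_0$ means $0<b<a<a+b<1$; in particular $0<a<1$ and $a>b>0$, so that $1-a>0$ and $(a-b)^2>0$. Hence, for every $p\in[0,1]$ both summands above are nonnegative, which gives $D_{0;q=0}\ge0$ (in fact the first summand is strictly positive). I do not anticipate any real obstacle in this case: it is the simplest of the sixteen boundary subcases, and the only point requiring the slightest care is to invoke the correct defining inequalities of $\om_0$ — specifically $a<1$ — so that the coefficient $1-a$ comes out nonnegative.
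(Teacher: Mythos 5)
Your proposal is correct and coincides with the paper's argument: the paper likewise factors $D_{0;q=0}=a p \left[(a - b)^2 + 6 a p (1 - a)\right]$ and observes it is manifestly nonnegative on $\om_0\times[0,1]$ since $a<1$ there. No gaps.
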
 

\begin{proof}[Proof of Lemma~\ref{lem:0q0}] 
In view of \eqref{eq:D_0}, $D_{0;q=0}=a p [(a - b)^2 + 6 a p (1 - a)]$, which is manifestly nonnegative for all $(a,b)\in\om_0$ and $p\in[0,1]$.   
\end{proof}

\begin{lemma}\label{lem:0q1}
$D_{0;q=1}\ge0$ for all $(a,b)\in\om_0$ and $p\in(0,1)$. 
\end{lemma}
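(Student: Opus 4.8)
The plan is to treat $D_{0;q=1}$ as a quadratic in $p$ and analyze it via its discriminant. Substituting $q=1$ into \eqref{eq:D_0} gives
\begin{equation*}
	D_{0;q=1}=A\,p^2+B\,p+C,
\end{equation*}
where $A:=6a^2(1-a)$, $B:=a\,u$ with $u:=a^2+4ab+7b^2-12b$, and $C:=b\,v$ with $v:=a^2-2ab-5b^2+6b=(a-b)^2+6b(1-b)$. On $\om_0$ one has $A>0$, so $D_{0;q=1}$ is convex in $p$; moreover $C=b\,[(a-b)^2+6b(1-b)]\ge0$ manifestly, and $b<\tfrac12$ (since $2b<a+b<1$).

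First I would dispose of the case $u\ge0$: then $B\ge0$, so for $p\in[0,1]\subset[0,\infty)$ every term of $A p^2+B p+C$ is nonnegative and $D_{0;q=1}\ge C\ge0$. Assume henceforth $u<0$. Then it suffices to show that the discriminant is nonpositive, i.e.\ $B^2-4AC\le0$, for then $D_{0;q=1}(p)=A\big(p+\tfrac B{2A}\big)^2+\tfrac{4AC-B^2}{4A}\ge0$ for \emph{all} $p$. Since $B^2-4AC=a^2\big(u^2-24b(1-a)\,v\big)$, everything reduces to the single bivariate polynomial inequality
\begin{equation*}
	24b(1-a)\,v\ge u^2\qquad\text{for }(a,b)\in\om_0\text{ with }u<0.
\end{equation*}
(This reduction is essentially sharp: when $u<0$ one checks $B>-2A$ on $\om_0$ — equivalently $(a-b)(12-7b-11a)>0$, which holds because $a>b$ and $11a+7b=4a+7(a+b)<11<12$ — so the vertex of the parabola lies in $(0,1)$, and a negative discriminant would force $D_{0;q=1}$ below $0$ on $[0,1]$.)

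The main obstacle is this last inequality, which is \emph{tight}. The polynomial $u^2-24b(1-a)v$ vanishes identically on the line $a=b$, and so does its $a$-derivative; hence it is divisible by $(a-b)^2$, say $u^2-24b(1-a)v=(a-b)^2\,G(a,b)$, where $G$ is a polynomial with $G(b,b)=12b(7b-4)<0$ for $b\in(0,\tfrac12)$. Since $(a-b)^2>0$ on $\om_0$, it suffices to prove $G\le0$ on the closure of $\{(a,b)\in\om_0\colon u<0\}$, and I would do this by the standard reduction to the boundary: on the edge $a=b$ one has $G<0$ as just computed; on the curve $u=0$ one has $u^2-24b(1-a)v=-24b(1-a)v\le0$, so $G\le0$; on the segment $a+b=1$ one is left with a one-variable polynomial in $b$ whose sign on the relevant subinterval of $(0,\tfrac12)$ is settled by Sturm's theorem; and the absence of an interior critical point of $u^2-24b(1-a)v$ off the line $a=b$ I would establish by the resultant elimination described at the start of this section, solving $\pd a(u^2-24b(1-a)v)=\pd b(u^2-24b(1-a)v)=0$ and inspecting the finitely many candidates (or, alternatively, by a single Redlog call as in Section~\ref{alt1}). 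Together with the case $u\ge0$, this gives $D_{0;q=1}\ge0$ on $\om_0\times(0,1)$.
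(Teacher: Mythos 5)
Your route is sound and genuinely different from the paper's. The paper proves Lemma~\ref{lem:0q1} by a three-variable analysis: it uses resultants to show that $D_{0;q=1}$ has no interior critical point in $(a,b,p)\in\om_0\times(0,1)$, and then works through the faces $p=0$, $p=1$, $b=0$, $b=a$, $b=1-a$, the last of which needs a further resultant-plus-interval-arithmetic computation. You instead eliminate $p$ analytically: your decomposition $D_{0;q=1}=Ap^2+Bp+C$ with $A=6a^2(1-a)>0$, $B=au$, $C=bv\ge0$ is correct, the case $u\ge0$ is immediate, and the case $u<0$ reduces to the two-variable inequality $H:=u^2-24b(1-a)v\le0$, which does factor as $H=(a-b)^2G$ with $G=a^2+34ab+49b^2-48b$ and $G(b,b)=12b(7b-4)$, exactly as you claim. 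This buys a reduction from three variables to two, at the price of an inequality that is tight along $a=b$; the paper avoids that tightness but pays with a heavier elimination.

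Two corrections to the sketch. First, the parenthetical sharpness remark has a sign slip: with your convention it is a \emph{positive} discriminant $B^2-4AC>0$ that would push $D_{0;q=1}$ below zero near the vertex. Second, and more substantively, the ``absence of an interior critical point'' of $H$ cannot be established, because it is false: $H$ has a critical point inside $\{(a,b)\in\om_0\colon u<0\}$, at the common solution of $3a+11b=4$ and $140b^2+b-4=0$, i.e.\ $(a,b)\approx(0.726,0.166)$. This does not hurt your strategy, since at that point $G<0$ and hence $H<0$, so the ``inspect the finitely many candidates'' step still closes the argument -- but the write-up should say that, not claim absence. In fact the deferred computer-algebra steps can be avoided entirely: from $G-4u=-3(a-7b)(a+b)$ one gets $G\le 4u\le0$ whenever $a\ge7b$ and $u\le0$; and if $a<7b$, then since $\pd a G=2a+34b>0$ and $a<\min(7b,1-b)$ on $\om_0$, either $b\le\frac18$ and $G<G(7b,b)=48b(7b-1)\le0$, or $b>\frac18$ and $G<G(1-b,b)=16b^2-16b+1<0$ because the roots of $16b^2-16b+1$ are $\frac{2\pm\sqrt3}{4}$ and $[\frac18,\frac12]\subset\big(\frac{2-\sqrt3}{4},\frac{2+\sqrt3}{4}\big)$. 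Together with your case $u\ge0$, this finishes the lemma by hand, with no resultants, Sturm sequences, or Redlog calls.
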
 

\begin{proof}[Proof of Lemma~\ref{lem:0q1}] 
Assume indeed in this proof that $(a,b)\in\om_0$ and $p\in(0,1)$, unless otherwise indicated. 
In view of \eqref{eq:D_0}, 
\begin{equation*}
D_{0;q=1}=	a^2 b + 6 b^2 - 2 a b^2 - 5 b^3 + a^3 p - 12 a b p + 4 a^2 b p + 
 7 a b^2 p + 6 a^2 p^2 - 6 a^3 p^2. 
\end{equation*}
If $D_{0;q=1}$ has a local extremum at 
some point $(a,b,p)\in\om_0\times(0,1)$, then at this point  
\begin{align*}
	D^{100}_{0;q=1}:=&\pd a{D_{0;q=1}}
	=2 a b - 2 b^2 + 3 a^2 p - 12 b p + 8 a b p + 7 b^2 p + 12 a p^2 \\ 
	&\qquad\qquad\qquad\qquad\qquad\qquad\qquad\qquad\qquad\qquad\qquad\quad - 18 a^2 p^2=0, \\ 
	D^{010}_{0;q=1}:=&\pd b{D_{0;q=1}}
	=a^2 + 12 b - 4 a b - 15 b^2 - 12 a p + 4 a^2 p + 14 a b p=0, \\ 
	D^{001}_{0;q=1}:=&\tfrac1a\,\pd p{D_{0;q=1}}
	=a^2 - 12 b + 4 a b + 7 b^2 + 12 a p - 12 a^2 p=0  
\end{align*}
(these three partial derivatives are denoted by \tt{D100der0q1}, \tt{D010der0q1}, \break 
\tt{D001der0q1}, respectively, in the files der0q1.nb and der0q1.pdf). 
So, (cf.\ \eqref{eq:R_n}),  
\begin{align*}
	\res^{101}:=&\frac{R_p(D^{100}_{0;q=1},D^{001}_{0;q=1})(a,b)}{6 a^2 (a - b)} \\ 
	=&-4 a^2 + 3 a^3 + 96 b - 148 a b + 67 a^2 b - 112 b^2 + 49 a b^2 + 
 49 b^3=0, \\ 
	\res^{011}:=&\frac{R_p(D^{010}_{0;q=1},D^{001}_{0;q=1})(a,b)}{2 a (a - b)}
	 =-12 a + 8 a^2 + 36 b - a b - 49 b^2=0, \\ 
	\res:=&R_a(\res^{101},\res^{011})(b)=864 b (7 b-4) (7 b-1)^2 (315 b^2-168 b-16)=0.  
\end{align*}
The latter equation, $\res=0$, together with the condition $(a,b)\in\om_0$ (which implies $0<b<\frac12$), yields $b=\frac17$. 
However, $\res^{101}\big|_{b=1/7}=\frac37 (1 - a)^2 (27 + 7 a)\ne0$. 
So, $D_{0;q=1}$ does not have a local extremum at 
any point $(a,b,p)\in\om_0\times(0,1)$. 

It remains to show that $D_{0;q=1}\ge0$ for all $(a,b,p)\in\bar\om_0\times[0,1]$ such that 
$p\in\{0,1\}$ or $(a,b)$ is on the boundary of $\om_0$; the latter, boundary condition on $(a,b)$ means that 
%
\begin{equation*}
	(0\le a\le1,b=0)\ \orl\ (0\le a=b\le\tfrac12)\ \orl\ (0\le b=1-a\le\tfrac12). 
\end{equation*}

Now indeed, $D_{0;q=1}\big|_{p=0}=b [(a - b)^2 + 6 b (1 - b)]\ge0$, $D_{0;q=1}\big|_{p=1}=(a - b)^2 [6 - 5 (a + b)]\ge0$, 
$D_{0;q=1}\big|_{b=0}=a^2 p [a + 6 p (1 - a)]\ge0$, and $D_{0;q=1}\big|_{b=a}=6 (1 - a) a^2 (1 - p)^2\ge0$.  
It remains to show that 
\begin{align*}
	D_{0;q=1,b=1-a}:=&D_{0;q=1}\big|_{b=1-a} \\
	=&1 + a - 4 a^2 + 2 a^3 - a (5 - 2 a - 4 a^2) p + 6 (1- a) a^2 p^2\overset{\text{?}}\ge0; 
\end{align*}
this expression is denoted by \tt{der0q1b1a} in in the files der0q1.nb and der0q1.pdf. 
If $D_{0;q=1,b=1-a}$ has a local extremum at 
some point $(a,p)\in(0,1)^2$, then at this point  
\begin{align*}
	D^{10}_{0;q=1,b=1-a}:=&\pd a{D_{0;q=1,b=1-a}} \\ 
	=&1 - 8 a + 6 a^2 + (-5 + 4 a + 12 a^2) p + 6 (2 - 3 a) a p^2=0, \\  
	D^{01}_{0;q=1,b=1-a}:=&\tfrac1a\,\pd p{D_{0;q=1,b=1-a}}
	=-5 + 2 a + 4 a^2 + 12 (1 - a) a p=0    
\end{align*}
(these two partial derivatives are denoted by \tt{D10der0q1b1a}, \tt{D01der0q1b1a}, respectively, in the files der0q1.nb and der0q1.pdf). 
Hence,  
\begin{align*}
	&\frac{R_a(D^{10}_{0;q=1,b=1-a},D^{01}_{0;q=1,b=1-a})(a,b)}{12 (1 - p)} 
	=-43 + 247 p - 229 p^2 + 57 p^3=0, \\ 
	&\frac{R_p(D^{10}_{0;q=1,b=1-a},D^{01}_{0;q=1,b=1-a})(a,b)}{6 a^2} 
 =(-19 + 130 a - 208 a^2 + 96 a^3) \\
 &\qquad\qquad\qquad\qquad\qquad\qquad\qquad\qquad\qquad\qquad\qquad\qquad\qquad\times(-1 + 2 a) =0,   
\end{align*}
whence $p\in[\frac{21}{100},\frac{22}{100}]$ and $a\in\{\frac12\}\cup[\frac{20}{100}, \frac{21}{100}]
\cup[\frac{861}{1000}, \frac{862}{1000}]$, with the corresponding values of $D_{0;q=1,b=1-a}$ in 
$$[\tfrac{8797}{20000},\tfrac{19381}{40000}]\cup[\tfrac{1050018257}{1250000000},\tfrac{11209569}{12500000}]
\cup[\tfrac{23386805609}{312500000000},\tfrac{1055742276137}{5000000000000}]\subset[0,\infty);$$ 
here we used the standard method of interval calculus -- see e.g.\ \cite[Section~14.9.4]{yap00}. 

It remains to show that $D_{0;q=1,b=1-a}\ge0$ when $a$ or $p$ is in the set $\{0,1\}$. But indeed $D_{0;q=1,b=1-a}\big|_{a=0}=1\ge0$, $D_{0;q=1,b=1-a}\big|_{a=1}=p\ge0$, $D_{0;q=1,b=1-a}\big|_{p=0}=(1-a)[1 + 2 a (1-a)]\ge0$, and $D_{0;q=1,b=1-a}\big|_{p=1}=(1 - 2 a)^2\ge0$.  
Thus, the proof of Lemma~\ref{lem:0q1} is complete.  
\end{proof}

\begin{lemma}\label{lem:1p0}
$D_{1;p=0}\ge0$ for all $(a,b)\in\om_1$ and $q\in(0,1)$. 
\end{lemma}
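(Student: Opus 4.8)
The plan is to start from the explicit expansion \eqref{eq:D_1} of $D_1$ and substitute $p=0$. Collecting terms, the $q^1$-part of $D_{1;p=0}$ is $a^2 b^3(a-b)^2q$ and every monomial of the $q^2$-part carries a factor $b^3$, so one gets a factorization of the shape
\[
D_{1;p=0}=b^3 q\,\bigl[a^2(a-b)^2+R(a,b)\,q\bigr],
\]
where $R(a,b)$ is the polynomial equal to the $q^2$-coefficient of $D_{1;p=0}$ divided by $b^3$. On $\om_1\times(0,1)$ the factor $b^3 q$ is strictly positive, and the bracket is affine in $q$; hence it is enough to check the bracket at the two endpoints $q=0$ and $q=1$. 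At $q=0$ the bracket equals $a^2(a-b)^2\ge0$. So the whole lemma reduces to proving that the quartic
\[
Q(a,b):=a^2(a-b)^2+R(a,b)=3a^2-2a+b-3b^2+3b^3-b^4+a^2b^2-3a^2b+6ab-6ab^2+2ab^3
\]
is nonnegative for all $(a,b)\in\om_1$ (recall $\om_1=\{0<b<a<1<a+b\}$ from \eqref{eq:om}, which forces $a\in(\tfrac12,1)$ and $b\in(1-a,a)$).

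The next step is to use the shape of $Q$ in the variable $a$. One has $\pd a^2 Q=2(b^2-3b+3)>0$, so for each fixed $b\in(0,1)$ the function $a\mapsto Q(a,b)$ is strictly convex, and from $\pd a Q=2\bigl(a(b^2-3b+3)-(1-b)^3\bigr)$ its minimizer is $a_*(b):=(1-b)^3/(b^2-3b+3)$. The key observation is that $a_*(b)$ always lies to the left of $\om_1$: for $b\in(0,\tfrac12]$ one checks $a_*(b)\le1-b$ (equivalently $(1-b)^2\le b^2-3b+3$, i.e.\ $b\le2$), and for $b\in[\tfrac12,1)$ one checks $a_*(b)\le b$ (equivalently $2(1-b)^3\le1$). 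Since $(a,b)\in\om_1$ forces $a>\max(b,1-b)\ge a_*(b)$, convexity gives that $Q(\cdot,b)$ is increasing on the whole $a$-range occurring in $\om_1$; hence $Q(a,b)\ge Q\bigl(\max(b,1-b),b\bigr)$, and it remains to examine $Q$ on the two pieces of the ``left'' edge of $\om_1$.

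Finally, each of these two edges yields a one-variable polynomial that is manifestly positive. For $b\in(0,\tfrac12]$ the left edge is the segment of $\{a+b=1\}$, where $Q(1-b,b)=1-5b^2+6b^3-2b^4$, whose derivative $-2b(4b-5)(b-1)$ is negative on $(0,\tfrac12)$, so this expression is $\ge Q(\tfrac12,\tfrac12)=\tfrac38>0$. For $b\in[\tfrac12,1)$ the left edge is the segment of $\{a=b\}$, where $Q(b,b)=b\,(2b^3-6b^2+6b-1)$, and $2b^3-6b^2+6b-1$ has derivative $6(b-1)^2\ge0$, so it is $\ge 2(\tfrac12)^3-6(\tfrac12)^2+6(\tfrac12)-1=\tfrac34>0$; thus $Q(b,b)>0$ on $[\tfrac12,1)$. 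Combining, $Q>0$ throughout $\om_1$, which proves $D_{1;p=0}\ge0$. The only mildly delicate point is the claim that the parabola vertex $a_*(b)$ stays to the left of $\om_1$ for every $b$, but this comes down to the two elementary polynomial inequalities above ($b\le2$ and $2(1-b)^3\le1$); the rest is routine single-variable calculus, so I do not expect a genuine obstacle.
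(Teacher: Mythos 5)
Your proof is correct and follows essentially the same route as the paper's: factor out $b^3q$, use linearity in $q$ to reduce to $q\in\{0,1\}$, and then exploit convexity in $a$ of the $q=1$ polynomial, reducing to the boundary segments $a=1-b$ (for $b\le\tfrac12$) and $a=b$ (for $b\ge\tfrac12$). Your explicit computation of the vertex $a_*(b)$ is just a repackaging of the paper's check that $\partial_a$ of that polynomial is nonnegative at $a=(1-b)\vee b$, so there is no substantive difference.
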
 

\begin{proof}[Proof of Lemma~\ref{lem:1p0}]
Assume indeed in this proof that $(a,b)\in\om_1$ and $q\in(0,1)$, unless otherwise indicated. 
Also just in this proof, let 
\begin{equation*}
	\tD_1:=\frac{D_1}{b^3q}. 
\end{equation*}
In view of \eqref{eq:D_1}, 
\begin{multline*}
\tD_{1;p=0}:=\frac{D_{1;p=0}}{b^3q}=a^2 (a-b)^2\\
+q (-a^4+2 a^3 b-3 a^2 b+3 a^2+2 a b^3-6 a b^2+6 a b-2 a-b^4+3 b^3-3 b^2+b), 	
\end{multline*}
which is of degree $1$ in $q$. So, without loss of generality $q\in\{0,1\}$. 
Next, $\tD_{1;p=0,q=0}=a^2 (a - b)^2>0$. 
Further, 
\begin{equation*}
\tD_{1;p=0,q=1}= (1 - b)^3 b - 2 a (1 - b)^3+ a^2 (3 - 3 b + b^2) 	
\end{equation*}
is convex in $a$, 
\begin{gather*}
\begin{aligned}
	\tD_{1;p=0,q=1}\big|_{a=1-b}=&(1 - b)^2[1 + 2 b (1 - b)]\ge0, \\ 
	\big(\pd a \tD_{1;p=0,q=1}\big)\big|_{a=1-b}=&2 (2 - b) (2 - b)\ge0, 
\end{aligned}	\\ 
	\tfrac1b\,\tD_{1;p=0,q=1}\big|_{a=b}=\tfrac12\,\big(\pd a \tD_{1;p=0,q=1}\big)\big|_{a=b}=-1 + 6 b - 6 b^2 + 2 b^3\ge0
\end{gather*} 
if $b\ge\frac12$ (which latter follows from the conditions $a=b$ and $(a,b)\in\bar\om_1$). 
Also, $(a,b)\in\bar\om_1$ implies that $a\ge (1-b)\vee b$.  
Thus, $\tD_{1;p=0,q=1}\ge0$, which completes the proof of Lemma~\ref{lem:1p0}. 
\end{proof}

\begin{lemma}\label{lem:1p1}
$D_{1;p=1}\ge0$ for all $(a,b)\in\om_1$ and $q\in[0,1]$. 
\end{lemma}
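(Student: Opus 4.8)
The plan is to follow the same route as the $p=1$ subcase in the proof of Lemma~\ref{lem:D1}, but to establish the two auxiliary polynomial inequalities by calculus instead of by Redlog. Regard $D_{1;p=1}$ as a quadratic polynomial in $q$, as displayed in \eqref{eq:D1;p=1}. The three ingredients I would prove are: (a) $D^{002}_{1;p=1}\ge0$ on $\bar\om_1$, where $D^{002}_{1;p=1}$ is the polynomial in \eqref{eq:D002D1;p=1}, so that $D_{1;p=1}$ is convex in $q$; (b) $D^{001}_{1;p=1,q=1}\le0$ on $\bar\om_1$, where $D^{001}_{1;p=1,q=1}$ is the polynomial in \eqref{eq:D001D1;p=1}; since $\pd q D_{1;p=1}$ is linear in $q$ with nonnegative slope $2b^3 D^{002}_{1;p=1}$, fact (b) forces $\pd q D_{1;p=1}\le0$ for all $q\le1$, so that $D_{1;p=1}$ is nonincreasing in $q$ on $[0,1]$; and (c) $D_{1;p=1}\big|_{q=1}=(a-b)^2[a^2(1-a)^3+(1-b)^3 b^2]$ (see \eqref{eq:D1;p=1,q=1}), which is manifestly nonnegative on $\bar\om_1$ because there $0\le a\le1$ and $0\le b\le1$ (indeed $\om_1$ is the open triangle with vertices $(\tfrac12,\tfrac12)$, $(1,0)$, $(1,1)$, so $\bar\om_1\subset[\tfrac12,1]\times[0,1]$). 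Combining (a)--(c), the minimum of $D_{1;p=1}$ over $q\in[0,1]$ equals $D_{1;p=1}\big|_{q=1}\ge0$, which is the claim.

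The remaining work is the calculus verification of the two polynomial inequalities $D^{002}_{1;p=1}\ge0$ and $D^{001}_{1;p=1,q=1}\le0$ on the compact triangle $\bar\om_1$. For each of these two polynomials $\Phi=\Phi(a,b)$ I would argue as in the treatment of $\Det_1$ in the proof of Lemma~\ref{lem:0<p,q<1}: first examine the restriction of $\Phi$ to each of the three edges of $\bar\om_1$, namely $\{b=a\colon a\in[\tfrac12,1]\}$, $\{a=1\colon b\in[0,1]\}$, and $\{a+b=1\colon b\in[0,\tfrac12]\}$, each of which gives a one-variable polynomial whose sign on the relevant interval is settled by Sturm's theorem (or by interval arithmetic); then locate the interior critical points of $\Phi$ by solving $\pd a\Phi=\pd b\Phi=0$, eliminating $b$ via the resultant $R_b(\pd a\Phi,\pd b\Phi)(a)$, isolating its finitely many real roots in $(\tfrac12,1)$ by Sturm, and evaluating $\Phi$ there. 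Since a polynomial attains its extrema on a compact set either at an interior critical point or on the boundary, these two steps together pin down the sign of $\Phi$ on all of $\bar\om_1$.

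The main obstacle is simply the algebraic bulk: $D^{002}_{1;p=1}$ has degree $4$ and $D^{001}_{1;p=1,q=1}$ has degree $5$ in $(a,b)$, so the eliminant resultants are moderately large univariate polynomials, and, as emphasized earlier in the paper, every arithmetic step must be carried out in exact arithmetic. There is also the standard risk that the resultant elimination is ``degenerate'' (the common zero set of $\pd a\Phi$ and $\pd b\Phi$ being positive-dimensional); should that occur, one falls back on a Gr\"obner-basis computation or, more cheaply, on exploiting monotonicity of $\Phi$ in one of the variables to eliminate it directly. As an alternative to the critical-point analysis one may instead look for a Positivstellensatz-type certificate on $\bar\om_1$, expressing $D^{002}_{1;p=1}$ and $-D^{001}_{1;p=1,q=1}$ as combinations with nonnegative coefficients of squares and of products of the manifestly nonnegative quantities $a-b$, $1-a$, $a+b-1$, and $b$ --- in the spirit of the identity $\tD_{1;q=1,p=1}=a^2(1-a)^3+b^2(1-b)^3$ used earlier; a successful certificate of this kind would remove any reliance on resultants.
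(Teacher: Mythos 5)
Your reduction is exactly the one the paper uses: convexity of $D_{1;p=1}$ in $q$ via $D^{002}_{1;p=1}\ge0$, nonpositivity of the $q$-derivative at $q=1$ via $D^{001}_{1;p=1,q=1}\le0$, and the manifest nonnegativity of $D_{1;p=1}\big|_{q=1}$ in \eqref{eq:D1;p=1,q=1}; the monotonicity argument you draw from (a)--(c) is the same as the paper's, and your identification of $\bar\om_1$ as the triangle with vertices $(\tfrac12,\tfrac12)$, $(1,0)$, $(1,1)$ is correct.

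Where you diverge is in how the two residual bivariate inequalities are to be certified, and this is also where your proposal stops short of a proof. You propose the generic machinery used elsewhere in Section~\ref{alt2} (edge restrictions settled by Sturm's theorem or interval arithmetic, interior critical points located through the resultant $R_b(\pd a\Phi,\pd b\Phi)(a)$, exact root isolation and evaluation). That route would in principle work, but as written it is a plan rather than a verification: no edge polynomial, resultant, root bracket, or critical value is actually computed, and the possible degeneracy of the elimination is only acknowledged, not ruled out; since these two inequalities \emph{are} the content of the lemma in this section (their Redlog verification was already given in the proof of Lemma~\ref{lem:D1}), the decisive work is still outstanding. The paper's own proof shows that none of that apparatus is needed here: for fixed $a\in(\tfrac12,1)$ the variable $b$ runs over $(1-a,a)$, and $\pd b^2 D^{002}_{1;p=1}=-6[1+2(a-b)](1-b)<0$, so concavity in $b$ reduces the first claim to the edges $b=1-a$ and $b=a$, where $D^{002}_{1;p=1}$ equals $6(1-a)a^3>0$ and (after $a=u+\tfrac12$) a manifestly positive expression; similarly $\tfrac12\pd a^2 D^{001}_{1;p=1,q=1}$ is convex in $b$ and nonpositive on both edges, so $D^{001}_{1;p=1,q=1}$ is concave in $a$, and since it vanishes at $a=b$ with $\pd a D^{001}_{1;p=1,q=1}\big|_{a=b}=-6(1-b)^2b^2<0$, it is $\le0$ for $a\ge b$. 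So the paper gets by with a short chain of one-variable convexity/concavity arguments and edge evaluations, with no resultants or exact root isolation; your approach buys algorithmic generality but at the cost of heavy exact computations (or, alternatively, a Positivstellensatz certificate) that would still have to be produced for the argument to be complete.
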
 

\begin{proof}[Proof of Lemma~\ref{lem:1p1}] 
Assume indeed in this proof that $(a,b)\in\om_1$ and $q\in(0,1)$. 
Recall \eqref{eq:D1;p=1} and \eqref{eq:D002D1;p=1};
in Mathematica notebook der1p1, we keep the notations \tt{D002der1p1} and \tt{D001der1p1q1} for $D^{002}_{1;p=1}$ and $D^{001}_{1;p=1,q=1}$, respectively.  
Note that $\pd b^2 D^{002}_{1;p=1}=-6 [1 + 2 (a - b)] (1 - b)<0$, so that $D^{002}_{1;p=1}$ is concave in $b$. 
Next, $D^{002}_{1;p=1}\big|_{b=1-a}=6 (1 - a) a^3>0$ and $\frac1a\,\big[D^{002}_{1;p=1}\big|_{b=a}\big]\big|_{a=u+1/2}
=\frac34 +
(\frac32 - 3 u + 2 u^2)>\frac34>0$, because the condition $(a,b)\in\om_1$ implies $\frac12<a<1$ and hence $u:=a-\frac12>0$. 

So, $D^{002}_{1;p=1}>0$, whence $D_{1;p=1}$ is convex in $q$. 
Recall \eqref{eq:D1;p=1,q=1} and \eqref{eq:D001D1;p=1}. 
It remains to show that $D^{001}_{1;p=1,q=1}\le0$. 

Consider 
\begin{equation*}
D^{201}_{1;p=1,q=1} := \tfrac12\,\pd a^2 D^{001}_{1;p=1,q=1} =  2 - 20 a^3 + 6 a^2 (6 - b) - b^3 - 6 a (3 - b^2);  
\end{equation*}
this expression is denoted by \tt{D201der1p1q1} in Mathematica notebook der1p1. 
Note that $\pd b^2 D^{201}_{1;p=1,q=1} = 12 a - 6 b>0$, so that $D^{201}_{1;p=1,q=1}$ is convex in $b$. 
Next, $D^{201}_{1;p=1,q=1}\big|_{b=1-a}=-(1 - a)^2 (7a - 1)\le0$ for $a>\frac12$ and 
$D^{201}_{1;p=1,q=1}\big|_{b=a}=2 - 18 a + 36 a^2 - 21 a^3$ attains its maximum value $-0.102\ldots<0$ over all $a>\frac12$ at $a=\frac17 (4 + \sqrt2)=0.77\ldots$. 

So, $D^{201}_{1;p=1,q=1}<0$, which shows that $D^{001}_{1;p=1,q=1}$ is concave in $a$. 
Further, $D^{001}_{1;p=1,q=1}\big|_{a=b}=0$ and $\pd a D^{001}_{1;p=1,q=1}\big|_{a=b}=-6 (1 - b)^2 b^2<0$. 
It follows that indeed $D^{001}_{1;p=1,q=1}\le0$, which is what remained there to  
complete the proof of Lemma~\ref{lem:1p1}. 
\end{proof}

\begin{lemma}\label{lem:1q0}
$D_{1;q=0}\ge0$ for all $(a,b)\in\om_1$ and $p\in(0,1)$. 
\end{lemma}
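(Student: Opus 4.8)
The plan is to substitute $q=0$ directly into the explicit expansion \eqref{eq:D_1} of $D_1$. Every monomial of $D_1$ carries a factor $p$, $p^2$, $q$, $pq$, or $q^2$, so after setting $q=0$ only $p$- and $p^2$-terms remain, and collecting them gives the factorisation
\begin{equation*}
	D_{1;q=0}=a^3 p\,\big[(a-b)^2 b^2+p\,g(a,b)\big],
\end{equation*}
where $g(a,b)$ is the explicit polynomial of degree $4$ in $a,b$ obtained from the coefficient of $p^2$ after dividing out $a^3$. Since $a>0$ and $p\ge0$ for $(a,b)\in\om_1$ and $p\in(0,1)$, it suffices to show that the bracketed factor is nonnegative; and since that factor is affine in $p$, it is enough to verify this at the two endpoints $p=0$ and $p=1$.

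The endpoint $p=0$ is immediate, the bracket being $(a-b)^2 b^2\ge0$. For $p=1$ I would add $g(a,b)$ to $(a-b)^2 b^2=a^2b^2-2ab^3+b^4$; the key (if minor) point is that the $b^3$- and $b^4$-terms cancel, leaving, after collecting powers of $b$,
\begin{equation*}
	h(a,b):=(a-b)^2 b^2+g(a,b)=(a^2-3a+3)\,b^2+2(a-1)^3\,b+a(1-a)^3.
\end{equation*}
I would then treat $h$ as a quadratic in $b$. Its leading coefficient $a^2-3a+3=(a-\tfrac32)^2+\tfrac34$ is positive, and a short computation gives its discriminant in $b$ as $4(a-1)^3\big(2(a-1)^3+1\big)$. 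The inequalities defining $\om_1$, namely $b<a$ together with $a+b>1$, force $a\in(\tfrac12,1)$, so $(a-1)^3\in(-\tfrac18,0)$; hence $4(a-1)^3<0$ while $2(a-1)^3+1\in(\tfrac34,1)>0$, and the discriminant is strictly negative. Consequently $h(a,b)>0$, the bracket is positive at $p=1$, and therefore $D_{1;q=0}>0$ on all of $\om_1\times(0,1)$.

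I do not expect any serious obstacle here: the whole argument is elementary single-variable reasoning, and unlike several of its sibling subcases this one needs no computer-algebra verification. The two places that call for a little care are performing the cancellation at $p=1$ correctly (so that $h$ really is quadratic, not quartic, in $b$) and observing that $(a,b)\in\om_1$ pins $a$ to the open interval $(\tfrac12,1)$ — which is exactly what makes the sign of the discriminant determinate. If one wished to mirror the style of the Redlog-based arguments elsewhere in Section~\ref{alt1}, one could instead submit the two boundary slices $\om_1\times\{0\}$ and $\om_1\times\{1\}$ to Redlog after the factorisation above, but the quadratic-in-$b$ computation makes that unnecessary.
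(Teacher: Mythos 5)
Your proof is correct and follows essentially the same route as the paper's: factor out $a^3p$, observe that the remaining bracket is affine in $p$, and check the endpoints $p=0$ (where it is $(a-b)^2b^2\ge0$) and $p=1$. The only difference is cosmetic, at the $p=1$ slice: you show the quadratic $h(a,b)$ in $b$ has positive leading coefficient and negative discriminant (using $a\in(\tfrac12,1)$), hence is positive for all real $b$, whereas the paper exploits its convexity in $b$ together with the nonnegativity of its value and $b$-derivative at $b=1-a$ and the fact that $b>1-a$ on $\om_1$.
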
 

\begin{proof}[Proof of Lemma~\ref{lem:1q0}]
Assume indeed in this proof that $(a,b)\in\om_1$ and $p\in(0,1)$, unless otherwise indicated. 
Also just in this proof, let 
\begin{equation*}
	\tD_1:=\frac{D_1}{a^3p}. 
\end{equation*}
In view of \eqref{eq:D_1}, 
\begin{multline*}
\tD_{1;q=0}=(a - b)^2 b^2 + [a (1 - a)^3 + 
    b  (2 a^3 - 6 a^2 + 2 a b^2 - 3 a b + 6 a - b^3 + 3 b - 2)] p, 	
\end{multline*}
which is of degree $1$ in $p$. So, without loss of generality $p\in\{0,1\}$. 
Next, $\tD_{1;q=0,p=0}=(a - b)^2 b^2>0$. 
Further, 
\begin{equation*}
\tD_{1;q=0,p=1}= (1 - a)^3 ( a - 2 b) + (3 - 3 a + a^2) b^2 	
\end{equation*}
is convex in $b$, 
\begin{gather*}
\begin{aligned}
	\tD_{1;q=0,p=1}\big|_{b=1-a}=&(1 - a)^2[1 + 2 a (1-a)]\ge0, \\ 
	\big(\pd a \tD_{1;q=0,p=1}\big)\big|_{b=1-a}=&2 (2 - a) (1 - a)\ge0, 
\end{aligned}	
\end{gather*} 
and so, $\tD_{1;q=0,p=1}\ge0$, because the condition $(a,b)\in\om_1$ implies $b>1-a$.  
Therefore, $\tD_{1;q=0}\ge0$, which is equivalent to $D_{1;q=0}\ge0$ and thus 
completes the proof of Lemma~\ref{lem:1q0}. 
\end{proof}

\begin{lemma}\label{lem:1q1}
$D_{1;q=1}\ge0$ for all $(a,b)\in\om_1$ and $p\in(0,1)$. 
\end{lemma}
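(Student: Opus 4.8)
The plan is to follow the same reduction used in the Redlog-based treatment of the subcase $q=1$ inside the proof of Lemma~\ref{lem:D1}, but to discharge each of its ingredients by calculus together with resultants, Sturm's theorem, and, where needed, interval arithmetic, rather than by the command \texttt{Reduce}. Throughout, assume $(a,b)\in\om_1$ and $p\in[0,1]$, so that $a\in(\tfrac12,1)$ and $1-a<b<a$. As in Section~\ref{alt1}, write
\begin{equation*}
	D_{1;q=1}=\psi(p):=Ap^2+Bp+C,
\end{equation*}
with $A$, $B$, $C$ as in the display containing \eqref{eq:A:-}, and recall the factorizations $A=-a^3P_A(a,b)$ and $C=b^3Q_C(a,b)$, where $P_A$ and $Q_C$ are the bracketed polynomials in those lines, as well as the identity $\psi(1)=A+B+C=(a-b)^2\big[a^2(1-a)^3+b^2(1-b)^3\big]$ recorded in \eqref{eq:D1;p=1,q=1}, which gives $\psi(1)>0$ outright. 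Since a convex quadratic that is nonnegative at both endpoints of $[0,1]$ is negative somewhere on $[0,1]$ only if its vertex lies in the open interval $(0,1)$ and its discriminant is positive, the proof reduces to three claims: (i) $A\ge0$; (ii) $C\ge0$; (iii) the set $\{(a,b)\in\om_1\colon B^2-4AC>0,\ -2A<B<0\}$ is empty. Writing $\mathrm{discr}$, $d_1$, $d_2$ for the normalized quantities introduced in the proof of Lemma~\ref{lem:D1} (so that on $\om_1$ one has $B<0\iff d_1<0$ and $2A+B>0\iff d_2>0$), claim (iii) is equivalent to $\{(a,b)\in\om_1\colon\mathrm{discr}>0,\ d_1<0,\ d_2>0\}=\emptyset$.

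For (i) and (ii) I would argue by compactness of $\bar\om_1$. The maximum of $P_A$ over $\bar\om_1$ is attained either at an interior critical point of $P_A$ or on the boundary of $\om_1$; interior critical points satisfy $\pd a P_A=\pd b P_A=0$, so eliminating $b$ via the resultant $R_b(\pd a P_A,\pd b P_A)(a)$ and applying Sturm's theorem one checks that this univariate polynomial has no root with $a\in(\tfrac12,1)$, ruling them out, while on each of the three affine edges $b=a$, $a=1$, $b=1-a$ the restriction of $P_A$ is a one-variable polynomial whose sign on the relevant range is settled by Sturm (or by a visible factorization, cf.\ \eqref{eq:D1;p=1,q=1}). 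This yields $P_A\le0$ on $\bar\om_1$, i.e.\ $A\ge0$; the argument for $Q_C\ge0$ is the same, now minimizing $Q_C$, with the proviso that if finitely many interior critical points of $Q_C$ survive the resultant elimination I would localize them by further resultants and evaluate $Q_C$ there in interval arithmetic, exactly as in the proof of Lemma~\ref{lem:0q1} (see \cite[Section~14.9.4]{yap00}).

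For (iii) I would split on the sign of $b-\tfrac12$ and prove the stronger assertions that $\mathrm{discr}\le0$ on $\{(a,b)\in\bar\om_1\colon b\le\tfrac12,\ d_1\le0\}$ and $\mathrm{discr}\le0$ on $\{(a,b)\in\bar\om_1\colon b\ge\tfrac12,\ d_2\ge0\}$; taken together these force $b=\tfrac12$ in (iii), which is then excluded by continuity of $\psi$ in $b$. On each of these two compact sets the maximum of $\mathrm{discr}$ is attained either at an interior critical point of $\mathrm{discr}$ (eliminated, or else localized and evaluated, via $R_b(\pd a\mathrm{discr},\pd b\mathrm{discr})(a)$ and Sturm), or on the relative boundary, which consists of pieces of $\partial\om_1$, of the line $b=\tfrac12$, and of the curve $d_1=0$ (respectively $d_2=0$); on the last curve one either solves the low-degree equation for one variable and applies Sturm, or sets up the Lagrange-multiplier polynomial system (cf.\ \cite[page~434]{pourciau}), and in every case the value of $\mathrm{discr}$ comes out $\le0$. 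Combining (i)--(iii) gives $\psi(p)\ge0$ for all $p\in[0,1]$, which is the assertion of the lemma; the supporting computations would be recorded in a Mathematica notebook (der1q1.nb, der1q1.pdf), parallel to those for the earlier lemmas of this section.

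The main obstacle is step (iii): $\mathrm{discr}$ has total degree six in $(a,b)$ while $d_1$ and $d_2$ have degree at most four, so the resultants that eliminate the interior critical points of $\mathrm{discr}$ over the two subregions are of high degree and their Sturm analyses are heavy; should a resultant degenerate to the zero polynomial, one would fall back on a Gr\"obner-basis dimension computation (cf.\ \cite[Ch.\ 9]{cox-little-oshea}) or, more practically, on interval arithmetic over the finitely many candidate extrema, or on a Positivstellensatz certificate for $-\mathrm{discr}$ on the region in question, as was used for other inequalities in this paper.
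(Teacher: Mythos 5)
Your reduction is logically sound and the facts it rests on are true, but it is a genuinely different route from the paper's own proof of this lemma: you are transplanting the $q=1$ branch of the Redlog proof of Lemma~\ref{lem:D1} (write $D_{1;q=1}=\psi(p)=Ap^2+Bp+C$, show $A\ge0$, $C\ge0$, $A+B+C\ge0$, and exclude a negative interior vertex via $\mathrm{discr}$, $d_1$, $d_2$ and a split at $b=\tfrac12$) into the resultant/Sturm/interval framework of Section~\ref{alt2}. The paper's proof of Lemma~\ref{lem:1q1} never touches the discriminant: it treats $D_{1;q=1}$ directly as a polynomial in the three variables $(a,b,p)$, rules out interior critical points on $\om_1\times(0,1)$ by the explicit resultants $\res^{101}$, $\res^{011}$ and their resultant in $b$ (shown to have no root $a\in(\tfrac12,1)$), and then works through the five faces $p=0$, $p=1$, $a=b$, $a=1$, $a=1-b$, the last two by bivariate critical-point analysis finished with interval arithmetic (and the $a=1$ computation is reused later for Lemma~\ref{lem:2q1}). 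Your version keeps every auxiliary inequality bivariate in $(a,b)$, which is appealing; the price is that the whole load shifts onto the degree-six polynomial $\mathrm{discr}$ restricted to the regions cut out by $d_1$ and $d_2$ --- precisely the two verifications that cost Redlog minutes rather than seconds in Section~\ref{alt1} --- whereas the paper's trivariate resultants happen to factor nicely and are actually computed.

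Two concrete caveats. First, by the standards of Section~\ref{alt2} the argument is not yet a proof: claims (i)--(iii) are only asserted together with a method, and for (iii) you yourself flag that the eliminations may be heavy or degenerate; nothing in the proposal demonstrates that they go through, and the viability of the decomposition is exactly what has to be exhibited (the paper's choice of a direct trivariate analysis is justified by the fact that its resultants were produced and analyzed). Second, your two ``stronger assertions'' (over $\bar\om_1$ with $b\le\tfrac12$, $d_1\le0$, respectively $b\ge\tfrac12$, $d_2\ge0$) are strictly stronger than what the Redlog runs certify and slightly more than you need; if they fail at some boundary or $d_i=0$ point, the correct repair is the one used in the proof of Lemma~\ref{lem:D1}: establish emptiness of the bad set separately for $b<\tfrac12$ and $b>\tfrac12$ and recover $b=\tfrac12$ (and closure points) by continuity of $D_{1;q=1}$ in $b$. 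Your remark that the two assertions ``force $b=\tfrac12$'' is in any case redundant --- if they hold, the set in (iii) is empty outright --- and you should also say explicitly that $A=0$ is harmless, since then $\psi$ is affine and nonnegative at $p=0$ and $p=1$.
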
 

\begin{proof}[Proof of Lemma~\ref{lem:1q1}]
Assume indeed in this proof that $(a,b)\in\om_1$ and $p\in(0,1)$, unless otherwise indicated. 
In view of \eqref{eq:D_1}, 
\begin{multline*}
D_{1;q=1}=b^3 (a^2 b^2 - 3 a^2 b + 3 a^2 + 2 a b^3 - 6 a b^2 + 6 a b - 2 a - 
    b^4 + 3 b^3 - 3 b^2 + b)  \\ 
- a^2 b^2 p (a^3 + 2 a^2 b - 6 a^2 - a b^2 + 6 a + 2 b^3 - 6 b^2 + 
    6 b - 2)  \\
- a^3 p^2 (a^4 - 2 a^3 b - 3 a^3 + 6 a^2 b + 3 a^2 - 2 a b^3 + 
    3 a b^2 - 6 a b - a + b^4 - 3 b^2 + 2 b).  	
\end{multline*}
If $D_{1;q=1}$ has a local extremum at 
some point $(a,b,p)\in\om_1\times(0,1)$, then at this point    
\begin{align*}
\pd a	D_{1;q=1}=&2 b^3 (a b^2 - 3 a b + 3 a + b^3 - 3 b^2 + 3 b - 1) \\ 
 &-a b^2 p (5 a^3 + 8 a^2 b - 24 a^2 - 3 a b^2 + 18 a + 4 b^3 - 
    12 b^2 + 12 b - 4) \\
    & - 
 a^2 p^2 (7 a^4 - 12 a^3 b - 18 a^3 + 30 a^2 b + 15 a^2 - 8 a b^3 + 
    12 a b^2  \\ 
    &\qquad\qquad\qquad\qquad\qquad\qquad\quad - 24 a b- 4 a + 3 b^4 - 9 b^2 + 6 b)=0, \\
\pd b	D_{1;q=1}=&b^2 (5 a^2 b^2-12 a^2 b+9 a^2+12 a b^3-30 a b^2+24 a b-6 a-7 b^4  \\
&\qquad\qquad\qquad\qquad\qquad\qquad\qquad\qquad\quad\   +18 b^3-15 b^2+4 b) \\ 
&+2 a^3 p^2 (a^3-3 a^2+3 a b^2-3 a b+3 a-2 b^3+3 b-1) \\ 
&-2 a^2 b p (a^3+3 a^2 b-6 a^2-2 a b^2+6 a+5 b^3-12 b^2+9 b-2)=0, \\
\frac{\pd p	D_{1;q=1}}{a^2}=&b^2 (2 - 6 a + 6 a^2 - a^3 - 6 b - 2 a^2 b + 6 b^2 + a b^2 - 2 b^3) \\
&- 
 2 a p (a^4 - 2 a^3 b - 3 a^3 + 6 a^2 b + 3 a^2 - 2 a b^3 + 3 a b^2 - 
    6 a b - a \\ 
    &\qquad\qquad\qquad\qquad\qquad\qquad\qquad\qquad\qquad\quad+ b^4 - 3 b^2 + 2 b)=0;   
\end{align*}
these three displayed expressions are denoted respectively by \tt{D100der1q1}, \tt{D010der1q1}, \tt{D001der1q1} in the Mathematica notebook der1q1.nb.   
So (cf.\ \eqref{eq:R_n}),  
\begin{align*}
	\res^{101}:=&\frac{R_p(\pd a	D_{1;q=1},\frac1{a^2}\,\pd p	D_{1;q=1})(a,b)}{a^2 b^3 (a - b) } \\ 
	=&3 a^9 b + a^8 (11 b^2 - 60 b + 24) - 
 a^7 (152 - 267 b - 6 b^2 + 35 b^3) \\
 &- 
 a^6 (55 b^4 - 348 b^3 + 369 b^2 + 486 b - 408) \\
 &- 
 a^5 (97 b^5 - 660 b^4 + 1764 b^3 - 1672 b^2 - 276 b + 600) \\
 &- 
 a^4 (33 b^6 - 492 b^5 + 2196 b^4 - 4304 b^3 + 3516 b^2 - 360 b - 
    520) \\
    &+ a^3 (5 b^7 + 126 b^6 - 1143 b^5 + 3724 b^4 - 5772 b^3 + 
    4068 b^2 - 732 b - 264) \\ 
    &- 
 3 a^2 (b - 1)^2 (5 b^6 - 10 b^5 + 80 b^4 - 316 b^3 + 468 b^2 - 
    224 b - 24) \\
    &+ 
 4 a (b - 1)^4 (b + 2) (5 b^4 - 14 b^3 + 30 b^2 - 26 b - 1) \\
 & - 
 12 (b - 1)^7 b (b + 1) (b + 2)=0, \\ 
	\res^{011}:=&\frac{R_p(\pd b	D_{1;q=1},\frac1{a^2}\,\pd p	D_{1;q=1})(a,b)}{2 a^2 b^2 (a - b) } \\ 
	 &=2 a^{10} b + a^9 (19 b^2 - 54 b + 18) - 
 a^8 (7 b^3 + 123 b^2 - 312 b + 120) \\
 &- 
 a^7 (88 b^4 - 282 b^3 - 135 b^2 + 794 b - 342) \\
 &+ 
 a^6 (52 b^5 + 366 b^4 - 1338 b^3 + 585 b^2 + 1014 b - 540) \\
 &- 
 a^5 (85 b^6 - 30 b^5 + 894 b^4 - 2947 b^3 + 2052 b^2 + 540 b - 
    510) \\
    &+ 3 a^4 (75 b^7 - 209 b^6 + 318 b^5 + 139 b^4 - 1108 b^3 + 
    942 b^2 - 54 b - 96) \\ 
    &- 
 a^3 (b - 1) (52 b^7 + 586 b^6 - 1817 b^5 + 2122 b^4 - 98 b^3 - 
    1484 b^2 \\ 
    &\qquad\qquad\qquad\qquad\qquad\qquad\qquad\qquad\qquad\qquad\qquad\qquad + 474 b + 90) \\ 
    &+ 
 6 a^2 (b - 1)^2 (17 b^7 - 34 b^6 + 164 b^5 - 300 b^4 + 202 b^3 + 
    26 b^2 - 37 b - 2) \\ 
    &- 
 6 a (b - 1)^4 b (13 b^5 - 17 b^3 + 72 b^2 - 26 b - 6) \\ 
 &+ 
 2 (b - 1)^6 b^2 (b + 2)^2 (7 b - 4)=0, 
 \end{align*}
 \begin{align*} 
	\res:=&R_b(\res^{101},\res^{011})(a) \\
	&=221184 (a-1)^{21} a^{21} (2 a^3-6 a^2+6 a-1) (13 a^3-21 a^2+12 a-2) \\ 
	&\times (242 a^7-2130 a^6+8562 a^5-20049 a^4+29184 a^3-25956 a^2\\
	&\qquad\qquad\qquad\qquad\qquad\qquad\qquad\qquad\qquad\qquad\qquad +12844 a-2700)^4 \\ 
	&\times (10232022093578922 a^{32}+243983422645932620 a^{31} \\
	&-4509200751103371792
   a^{30}+6473804776182225090 a^{29} \\ 
   &+392173262502922141262
   a^{28}-4886702832420734534706 a^{27} \\ 
   &+33179277660685865127615
   a^{26}-156920889820420563967402 a^{25} \\ 
   &+563853803520130467807750
   a^{24}-1609531040121454475809908 a^{23} \\ 
   &+3749969793954874952607178
   a^{22}-7259852838023387480456886 a^{21} \\ 
   &+11824823903860313224996785
   a^{20}-16347303258436435525821596 a^{19} \\ 
   &+19300470236420386651602360
   a^{18}-19542251444785239731292096 a^{17} \\ \break 
   &+17012372613520345095894435
   a^{16}-12747057132858406422499290 a^{15} \\ 
   &+8218546949212261087298286
   a^{14}-4552228300462560181850156 a^{13} \\ 
   &+2159681892325551379446354
   a^{12}-873593583957851406529086 a^{11} \\ 
   &+299376707230436244983721
   a^{10}-86180301652329538603686 a^9 \\ 
   &+20605976715300445636407
   a^8-4032357710602439708982 a^7 \\ 
   &+633335473568252968362
   a^6-77778352414976716710 a^5 \\ 
   &+7204715390026622547 a^4-478176202267116138
   a^3 \\ 
   &+21028578810363360 a^2-535896014633136 a+5818539184128)
	=0;  
\end{align*}
the three resultants above are denoted respectively by \tt{res101}, \tt{res011}, \tt{res} in the Mathematica notebook der1q1.nb.   
The latter equation, $\res=0$, has no roots $a\in(\frac12,1)$ -- which latter condition is implied by 
the condition $(a,b)\in\om_1$. 
So, $D_{1;q=1}$ does not have a local extremum at 
any point $(a,b,p)\in\om_1\times(0,1)$. 

So, it remains to show that $D_{1;q=1}\ge0$ for all $(a,b,p)\in\bar\om_1\times[0,1]$ such that 
$p\in\{0,1\}$ or $(a,b)$ is on the boundary of $\om_1$; the latter, boundary condition on $(a,b)$ means that 
\begin{equation*}
	(\tfrac12\le a=b\le1)\ \orl\ (0\le b\le1,a=1)\ \orl\ (\tfrac12\le a=1-b\le1). 
\end{equation*}

Now indeed, 
\begin{equation*}
D_{1;q=1,p=0}=D_{1;q=1}\big|_{p=0}=b^3 [(1 - b)^3 (b - 2 a) + a^2 (3 - 3 b + b^2)]	
\end{equation*}
is convex in $a$. At that, $D_{1;q=1,p=0}\big|_{a=1-b}=b^3(1 - 5 b^2 + 6 b^3 - 2 b^4)\ge0$ and 
$\pd a D_{1;q=1,p=0}\big|_{a=1-b}=2b^3(2 - b) (1 - b)\ge0$ for $b\in[0,\frac12]$. 
Also, for all $(a,b)\in\bar\om_1$ one has $a\ge1-b$ and \big($a=1-b\implies b\in[0,\frac12]$\big), whence $D_{1;q=1,p=0}\ge0$. 

Next, 
\begin{equation*}
	D_{1;q=1,p=1}=D_{1;q=1}\big|_{p=1}=(a - b)^2 [a^2 (1 - a)^3 + (1 - b)^3 b^2]\ge0 
\end{equation*}
for all $(a,b)\in\bar\om_1$. 

Further, for all $(a,b)\in\bar\om_1$ one has $a=b\implies b\in[\frac12,1]$, whence for all $p\in[0,1]$ 
\begin{equation*}
	D_{1;q=1,a=b}=D_{1;q=1}\big|_{a=b}=b^4 (1 - p)^2(2 b^3 - 6 b^2 + 6 b - 1)\ge0.  
\end{equation*}

It remains to consider the two more difficult boundary cases, with $a=1$ and with $a=1-b$.  

Consider now 
\begin{equation}\label{eq:tD1q1a1}
\begin{aligned}
	\tD_{1;q=1,a=1}:=&\tfrac1{b^2}\,D_{1;q=1}\big|_{a=1} \\ 
	&=5 b^4 - b^5 + p - 2 b^3 (4 + p) + b^2 (4 + 7 p - p^2) + 
 b (1 - 8 p + 2 p^2). 
\end{aligned} 
\end{equation}
If $\tD_{1;q=1,a=1}$ has a local extremum at 
some point $(b,p)\in(0,1)^2$, then at this point
\begin{align*}
\tD^{10}_{1;q=1,a=1}:=&\frac{\pd b\tD_{1;q=1,a=1}}{1-b} 
	=(b-1) (5 b^2-10 b-1)+2 (3 b-4) p+2 p^2=0, \\  
\tD^{01}_{1;q=1,a=1}:=&\pd p \tD_{1;q=1,a=1}
	=1 - 8 b + 7 b^2 - 2 b^3 + 2 (2 - b) b p=0;    
\end{align*}
these two displayed expressions are denoted respectively by \tt{D010der1q1a1}, \tt{D001der1q1a1} in the Mathematica notebook der1q1.nb.   
Hence, 
\begin{align*}
	\frac{R_p(\tD^{10}_{1;q=1,a=1},\tD^{01}_{1;q=1,a=1})(b)}{2 (1 - b)^2 } 
	&=1 + 2 b - 59 b^2 + 106 b^3 - 58 b^4 + 10 b^5=0, \\  
		\frac{R_b(\tD^{10}_{1;q=1,a=1},\tD^{01}_{1;q=1,a=1})(p)}{4 (1 - p)^2 } 
	&=392 - 1304 p + 293 p^2 + 626 p^3 \\ 
	&\qquad\qquad\qquad\qquad\qquad\qquad+ 28 p^4 - 40 p^5=0;       
\end{align*}
these two resultants are denoted respectively by \tt{respder1q1a1}, \tt{resbder1q1a1} in the Mathematica notebook der1q1.nb. 
For the roots of these two resultants, one has  $b\in[\frac{18}{100},\frac{19}{100}]\cup[\frac{89}{100},\frac{90}{100}]$ and $p\in[\frac{34}{100},\frac{35}{100}]$, with the corresponding values of $\tD_{1;q=1,a=1}$ in the set 
$[\frac{1899988901}{10000000000},\frac{672192933}{2500000000}]
\cup[\frac{444839}{5000000},\frac{7710771351}{10000000000}]\subset[0,\infty)$. 
Therefore, $\tD_{1;q=1,a=1}\ge0$ at the interior critical points $(b,p)\in(0,1)^2$ of $\tD_{1;q=1,a=1}$. 
At that, $\tD_{1;q=1,a=1}\big|_{b=0}=p\ge0$, $\tD_{1;q=1,a=1}\big|_{b=1}=(1 - p)^2\ge0$, 
$\tD_{1;q=1,a=1}\big|_{p=0}=b+b^2(2 - b)^2 (1 - b) \ge0$, and $\tD_{1;q=1,a=1}\big|_{p=1}=(1 - b)^5\ge0$ for $(b,p)\in[0,1]^2$. 
Thus, $\tD_{1;q=1,a=1}\ge0$ and, in view of \eqref{eq:tD1q1a1}, 
\begin{equation}\label{eq:D_{1;q=1,a=1}>0}
\text{$D_{1;q=1,a=1}=D_{1;q=1}\big|_{a=1}\ge0$ for $(b,p)\in[0,1]^2$. }
\end{equation}

Finally, to complete the proof of Lemma~\ref{lem:1q1}, consider 
\begin{equation}\label{eq:tD1q1a1b}
\begin{aligned}
	\tD_{1;q=1,a=1-b}:=&\tfrac1{(1 - b)^2 b^2 }\,D_{1;q=1}\big|_{a=1-b} \\ 
	=&p + b (1 - 11 p + 6 p^2) + b^2 (2 + 14 p - 12 p^2) - 
 b^3 (2 + 4 p - 6 p^2),   
\end{aligned} 
\end{equation}
for $(b,p)\in[0,\frac12]\times[0,1]$; the latter displayed expression is denoted by \tt{der1q1a1b} in the Mathematica notebook der1q1.nb.   
If $\tD_{1;q=1,a=1-b}$ has a local extremum at 
some point $(b,p)\in(0,\frac12)\times(0,1)$, then at this point
\begin{align*}
\tD^{10}_{1;q=1,a=1-b}:=&\pd b\tD_{1;q=1,a=1-b} 
	=1 - 11 p + 6 p^2 + b (4 + 28 p - 24 p^2) \\ 
	&\qquad\qquad\qquad\qquad\qquad\qquad\qquad\qquad- 6 b^2 (1 + 2 p - 3 p^2)=0, \\  
\tD^{01}_{1;q=1,a=1-b}:=&\frac{\pd p \tD_{1;q=1,a=1-b}}{1-b}
	=1 - 2 b (5 - 6 p) + b^2 (4 - 12 p)=0;     
\end{align*}
these two displayed expressions are denoted respectively by \tt{D010der1q1a1b}, \tt{D001der1q1a1b} in the Mathematica notebook der1q1.nb.
Hence,  
\begin{align*}
	\frac{R_p(\tD^{10}_{1;q=1,a=1-b},\tD^{01}_{1;q=1,a=1-b})(b)}{6 (1 - b)^2 (1 - 2 b) } 
	&=1 + 2 b - 80 b^2 + 96 b^3=0, \\  
		\frac{R_b(\tD^{10}_{1;q=1,a=1-b},\tD^{01}_{1;q=1,a=1-b})(p)}{12 (1 - p) } 
	&=-43 + 247 p - 229 p^2 + 57 p^3=0,      
\end{align*}
whence $b\in[\frac{13}{100},\frac{14}{100}]$ and $p\in[\frac{21}{100},\frac{22}{100}]$, with the corresponding values of $\tD_{1;q=1,a=1-b}$ in the set 
$[\frac{71711139}{625000000},\frac{213807761}{1250000000}]\subset[0,\infty)$; 
the latter two resultants are denoted respectively by \tt{respder1q1a1b}, \tt{resbder1q1a1b} in the Mathematica notebook der1q1.nb. 
Therefore, $\tD_{1;q=1,a=1-b}\ge0$ at the interior critical points $(b,p)\in(0,\frac12)\times(0,1)$ of $\tD_{1;q=1,a=1-b}$. 
At that, 
$\tD_{1;q=1,a=1-b}\big|_{b=0}=p\ge0$, $\tD_{1;q=1,a=1-b}\big|_{b=\frac12}=\frac34(1 - p)^2\ge0$, 
$\tD_{1;q=1,a=1-b}\big|_{p=0}=b+2b^2(1 - b) \ge0$, and $\tD_{1;q=1,a=1-b}\big|_{p=1}=(1 - 2b)^2\ge0$ for $(b,p)\in[0,\frac12]\times[0,1]$. 
Thus, $\tD_{1;q=1,a=1-b}\ge0$ and, in view of \eqref{eq:tD1q1a1b}, 
$D_{1;q=1,a=1-b}=D_{1;q=1}\big|_{a=1-b}\ge0$ for $(b,p)\in[0,\frac12]\times[0,1]$. 
 
This  
completes the proof of Lemma~\ref{lem:1q1}. 
\end{proof}

\begin{lemma}\label{lem:2p0}
$D_{2;p=0}\ge0$ for all $(a,b)\in\om_2$ and $q\in(0,1)$. 
\end{lemma}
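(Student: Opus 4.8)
The plan is to dispatch $D_{2;p=0}$ by two one-variable monotonicity reductions --- first in $q$, then in $a$ --- exploiting only the affine constraints $a>1$ and $0<b<1$ that define $\om_2$; the corresponding exact calculations would be recorded in the Mathematica notebook der2p0.nb and its pdf copy der2p0.pdf. The first step is to collect from \eqref{eq:D_2} the terms free of $p$ and factor out $bq$, which yields
\begin{equation}\label{eq:D2p0-factored}
	D_{2;p=0}=bq\,\big[(3-3b+b^2)\,a^2-2a+b+q\,b\,(2a-b)(3-3b+b^2)\big].
\end{equation}

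Granting \eqref{eq:D2p0-factored}, I would then argue as follows. On $\bar\om_2$ one has $a\ge1$ and $0\le b\le1$, hence $3-3b+b^2\ge1>0$ (this quadratic decreases from $3$ to $1$ as $b$ runs over $[0,1]$), $2a-b\ge2-1>0$, and $b\ge0$; consequently the coefficient of $q$ inside the bracket in \eqref{eq:D2p0-factored}, namely $b(2a-b)(3-3b+b^2)$, is nonnegative, so the bracket is nondecreasing in $q$ and therefore $D_{2;p=0}\ge bq\big[(3-3b+b^2)a^2-2a+b\big]$ for $(a,b,q)\in\bar\om_2\times[0,1]$. It then remains to check that $(3-3b+b^2)a^2-2a+b\ge0$ for $a\ge1$ and $b\in[0,1]$: this is an upward-opening quadratic in $a$ whose vertex sits at $a=1/(3-3b+b^2)\le1$, so it is nondecreasing on $a\ge1$ and hence bounded below by its value at $a=1$, which is $(3-3b+b^2)-2+b=(1-b)^2\ge0$. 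Together with $bq\ge0$, this gives $D_{2;p=0}\ge0$ on all of $\bar\om_2\times[0,1]$, and in particular for $(a,b)\in\om_2$ and $q\in(0,1)$, as desired.

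The main (and essentially only) obstacle is the bookkeeping that produces the factorization \eqref{eq:D2p0-factored}: one must notice that the $p$-free part of $D_2$ splits into a $q$-independent block $a^2(3-3b+b^2)-2a+b$ and a $q$-linear block whose $a$-dependence is again carried by the factor $3-3b+b^2$. Once this regrouping is in hand, no algebra beyond expanding \eqref{eq:D_2} is needed, and the remaining two steps are elementary calculus (the sign of an affine coefficient, and the location of the vertex of a quadratic); in particular no resultants or quantifier elimination are required for this lemma, which is why it sits among the simplest of the sixteen boundary lemmas.
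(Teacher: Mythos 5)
Your proof is correct, and its factorization is exactly the paper's: after pulling out the positive factor $bq$ from the $p$-free part of \eqref{eq:D_2}, your bracket coincides with the paper's $\tD_{2;p=0}=a^2(3-3b+b^2)-2a+b+(2a-b)b(3-3b+b^2)\,q$, and both arguments then exploit its affine dependence on $q$ followed by elementary one-variable analysis. The only (minor) difference is that you note the $q$-coefficient $b(2a-b)(3-3b+b^2)$ is nonnegative on $\bar\om_2$, so only the value at $q=0$ needs checking (done via the vertex of the quadratic in $a$ and the value $(1-b)^2$ at $a=1$), whereas the paper reduces to both endpoints $q\in\{0,1\}$, treating $q=0$ by monotonicity in $b$ and $q=1$ by convexity in $a$.
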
 

\begin{proof}[Proof of Lemma~\ref{lem:2p0}]
Assume indeed in this proof that $(a,b)\in\om_2$ and $q\in(0,1)$, unless otherwise indicated. 
Also just in this proof, let 
\begin{equation*}
	\tD_2:=\frac{D_2}{bq}. 
\end{equation*}
In view of \eqref{eq:D_2}, 
\begin{multline*}
\tD_{2;p=0}:=\frac{D_{2;p=0}}{bq}=-2 a + 3 a^2 + b - 3 a^2 b + a^2 b^2 + (2 a - b) b (3 - 3 b + b^2) q, 	
\end{multline*}
which is of degree $1$ in $q$. So, without loss of generality $q\in\{0,1\}$. 

Next, let us show that $\tD_{2;p=0,q=0}=-2 a + b + a^2 (3 - 3 b + b^2)\ge0$. 
Since $\pd b \tD_{2;p=0,q=0}=1 + a^2 (2 b - 3)<1 + (2 b - 3)<0$, without loss of generality $b=1$. But $\tD_{2;p=0,q=0}\big|_{b=1}=(1 - a)^2\ge0$, which confirms that $\tD_{2;p=0,q=0}\ge0$. 

Finally, to complete the proof of Lemma~\ref{lem:2p0}, let us show that $\tD_{2;p=0,q=1}=(1 - b)^3 (b - 2 a) + a^2 (3 - 3 b + b^2)\ge0$. Note that $\tD_{2;p=0,q=1}$ is convex in $a$, and at that 
$\tD_{2;p=0,q=1}\big|_{a=1}=1 + (2 - b)^2 (1 - b) b\ge0$ 
and $\big(\pd a\tD_{2;p=0,q=1}\big)\big|_{a=1}=2 [2(1 - b^2) + b^3]\ge0$ for $b\in[0,1]$. 
Since the condition $(a,b)\in\om_2$ implies $a>1$, we conclude that indeed $\tD_{2;p=0,q=1}\ge0$, which completes the proof of Lemma~\ref{lem:2p0}. 
\end{proof}

\begin{lemma}\label{lem:2p1}
$D_{2;p=1}\ge0$ for all $(a,b)\in\om_2$ and $q\in(0,1)$. 
\end{lemma}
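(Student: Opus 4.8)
The plan is to substitute $p=1$ into the explicit expression \eqref{eq:D_2} for $D_2$ and then reduce. Collecting $D_{2;p=1}$ by powers of $q$, one sees that the terms of $D_2$ carrying a factor $p$ but no $q$, together with those carrying $p^2$, collapse upon setting $p=1$ to just $a^2$; so $D_{2;p=1}$ is a quadratic in $q$, namely $D_{2;p=1}=a^2+b\,q\,(b-2a-a^2 u)+b^2 q^2(2a-b)\,u$, where I abbreviate $u:=3-3b+b^2$, which is positive since as a quadratic in $b$ it has negative discriminant.

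The key step is then the polynomial identity
\[
	D_{2;p=1}=\bigl(1-b\,q\,u\bigr)\bigl(a^2-2abq+b^2 q\bigr),
\]
verified directly by expansion. After this, one argues that each factor is manifestly nonnegative whenever $b$ and $q$ lie in $[0,1]$. For the first factor, the identity $1-b\,u=(1-b)^3$ gives $1-b\,q\,u=1-q\bigl(1-(1-b)^3\bigr)=(1-q)+q(1-b)^3\ge0$. For the second factor, completing the square gives $a^2-2abq+b^2 q=(a-bq)^2+b^2 q(1-q)\ge0$. Since the case $(C_2)$ and the hypothesis $(a,b)\in\om_2$ force $b\in(0,1)$, and $q\in(0,1)$ by assumption, both factors are nonnegative, whence $D_{2;p=1}\ge0$; in fact the inequality holds for all real $a$ and all $(b,q)\in[0,1]^2$, so no part of the hypothesis beyond $b,q\in[0,1]$ is actually needed.

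The only genuine obstacle is spotting the factorization. It can be guessed after noticing that $D_{2;p=1}$ equals $a^2$ at $q=0$ and equals the neatly factored $(1-b)^3(a-b)^2$ at $q=1$; alternatively, one may compute the discriminant of $D_{2;p=1}$ regarded as a quadratic in $q$, observe that it equals the perfect square $b^2(a^2 u-2a+b)^2$, and read off the two roots $q=1/(b\,u)$ and $q=a^2/\bigl(b(2a-b)\bigr)$, which immediately give the displayed factorization. Everything past that point is routine arithmetic, in the spirit of the elementary manifestly nonnegative factorizations used in Lemmas~\ref{lem:0p0} and \ref{lem:0q0}.
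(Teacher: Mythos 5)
Your proof is correct and follows essentially the same route as the paper: the factorization $D_{2;p=1}=\bigl(1-b\,q\,(3-3b+b^2)\bigr)\bigl(a^2+b(b-2a)q\bigr)$ is exactly the paper's $f_2f_1$, and your nonnegativity arguments for each factor (writing the first as $(1-q)+q(1-b)^3$ and completing the square in the second) are just an equivalent way of noting that each factor is affine in $q$ and nonnegative at $q=0$ and $q=1$, which is how the paper concludes.
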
 

\begin{proof}[Proof of Lemma~\ref{lem:2p1}]
Assume indeed in this proof that $(a,b)\in\om_2$ and $q\in(0,1)$. 
In view of \eqref{eq:D_2}, 
$
D_{2;p=1}=f_1f_2, 	
$ 
where $f_1:=f_1(q):=a^2 + b (b - 2 a) q$ and $f_2:=f_2(q):=1 - b (3 - 3 b + b^2) q$. 
Let us show that each of the two factors $f_1$ and $f_2$ is nonnegative. Since each of them 
is of degree $1$ in $q$, it it is enough to note that $f_1(0)=a^2\ge0$, $f_1(1)=(a-b)^2\ge0$, $f_2(0)=1\ge0$, and  $f_2(1)=(1-b)^3\ge0$. 
This completes the proof of Lemma~\ref{lem:2p1}. 
\end{proof}

\begin{lemma}\label{lem:2q0}
$D_{2;q=0}\ge0$ for all $(a,b)\in\om_2$ and $p\in(0,1)$. 
\end{lemma}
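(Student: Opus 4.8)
The plan is to exploit an explicit factorization of $D_{2;q=0}$ rather than invoke any algebraic elimination. Setting $q=0$ in \eqref{eq:D_2}, every surviving monomial carries a factor of $a^2p$, so one can write
\begin{equation*}
	D_{2;q=0}=a^2 p\,\big[(1-3a+3a^2-2a^2b+ab^2)+p\,(3a-3a^2+2a^2b-ab^2)\big].
\end{equation*}
The key observation is that the two polynomials inside the brackets add up to the constant $1$; hence, writing $g:=g(a,b):=1-3a+3a^2-2a^2b+ab^2$, the bracketed expression equals $(1-p)\,g+p$, so that
\begin{equation*}
	D_{2;q=0}=a^2 p\,\big[(1-p)\,g+p\big].
\end{equation*}

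Since $a^2p\ge0$, $1-p\ge0$, and $p\ge0$ for $(a,b)\in\om_2$ and $p\in[0,1]$, it remains only to check that $g\ge0$ on $\bar\om_2$, i.e., for $a\ge1$ and $b\in[0,1]$. First I would note that $g$ is convex in $a$, because $\pd a^2 g=2(3-2b)>0$. Next, at $a=1$ one has $g\big|_{a=1}=(1-b)^2\ge0$ and $\pd a g\big|_{a=1}=(1-b)(3-b)\ge0$. Therefore, by convexity, $g\ge g\big|_{a=1}=(1-b)^2\ge0$ for all $a\ge1$ and $b\in[0,1]$, and the claimed nonnegativity of $D_{2;q=0}$ over $\om_2\times(0,1)$ follows at once.

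There is no real obstacle here; this is one of the simplest of the sixteen boundary subcases. The only point requiring a moment's thought is spotting that the coefficient of $p^0$ and the coefficient of $p^1$ inside the bracket sum to $1$, which turns the bracket into a convex combination of $g$ and $1$ and reduces the whole problem to the one-variable convexity check for $g$ at the single boundary value $a=1$.
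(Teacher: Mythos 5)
Your proof is correct and follows essentially the same route as the paper: both factor out $a^2p$, observe that the remaining bracket is affine in $p$ with endpoint values $g$ (at $p=0$) and $1$ (at $p=1$), and reduce to showing $g\ge0$ on $\bar\om_2$. The only (trivial) difference is the last step -- you verify $g\ge0$ via convexity and a nonnegative $a$-derivative at $a=1$, whereas the paper notes $\pd b g=2a(b-a)<0$ and checks $g\big|_{b=1}=(1-a)^2\ge0$.
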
 

\begin{proof}[Proof of Lemma~\ref{lem:2q0}]
Assume indeed in this proof that $(a,b)\in\om_2$ and $p\in(0,1)$. 
Also just in this proof, let 
\begin{equation*}
	\tD_2:=\frac{D_2}{a^2p}. 
\end{equation*}
In view of \eqref{eq:D_2}, 
\begin{multline*}
\tD_{2;q=0}:=\frac{D_{2;q=0}}{a^2p}=1 - 3 a + 3 a^2 - 2 a^2 b + a b^2 + a (3 - 3 a + 2 a b - b^2) p, 	
\end{multline*}
which is of degree $1$ in $p$. So, without loss of generality $p\in\{0,1\}$. 

Next, let us show that $\tD_{2;q=0,p=0}=1 + a^2 (3 - 2 b) - a (3 - b^2)\ge0$. 
Since $\pd b \tD_{2;q=0,p=0}=2 a (b - a)<0$, without loss of generality $b=1$. But $\tD_{2;q=0,p=0}\big|_{b=1}=(1 - a)^2\ge0$, which confirms that $\tD_{2;q=0,p=0}\ge0$. 

Finally, note that $\tD_{2;q=0,p=1}=1\ge0$. 

This completes the proof of Lemma~\ref{lem:2q0}. 
\end{proof}

\begin{lemma}\label{lem:2q1}
$D_{2;q=1}\ge0$ for all $(a,b)\in\om_2$ and $p\in(0,1)$. 
\end{lemma}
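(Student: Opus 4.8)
The plan is to exploit that, once $q=1$ is substituted in \eqref{eq:D_2}, the polynomial $D_{2;q=1}$ is merely \emph{quadratic} in $p$. Write $D_{2;q=1}=A\,p^2+B\,p+C$, where the coefficients depend on $(a,b)$ only; explicitly, $A=a^3\,h$ with $h:=h(a,b):=3-3a+2ab-b^2$, while the two endpoint values are already under control: $D_{2;q=1}\big|_{p=0}=C=D_{2;p=0,q=1}\ge0$ by (the proof of) Lemma~\ref{lem:2p0}, and $D_{2;q=1}\big|_{p=1}=A+B+C=D_{2;p=1,q=1}=(a-b)^2(1-b)^3\ge0$ by (the proof of) Lemma~\ref{lem:2p1}. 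So the issue is only about the behaviour of the parabola $p\mapsto D_{2;q=1}$ between its nonnegative endpoint values.

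First I would record that $\pd b h=2(a-b)>0$ for $(a,b)\in\om_2$, so $h(a,b)<\lim_{b\uparrow1}h(a,b)=2-a$; in particular $A=a^3h>0$ forces $a<2$. Then I would split into four cases according to the signs of $A$, $B$, $2A+B$. (i) If $A\le0$, then $D_{2;q=1}$ is concave in $p$, hence on $[0,1]$ it is $\ge\min\big(D_{2;q=1}|_{p=0},\,D_{2;q=1}|_{p=1}\big)\ge0$. (ii) If $A>0$ and $B\ge0$, then $\pd p D_{2;q=1}=2Ap+B\ge0$ on $[0,1]$, so $D_{2;q=1}$ is nondecreasing in $p$ and thus $\ge D_{2;q=1}|_{p=0}=C\ge0$. (iii) If $A>0$ and $2A+B\le0$, then $\pd p D_{2;q=1}=2Ap+B\le0$ on $[0,1]$, so $D_{2;q=1}$ is nonincreasing in $p$ and thus $\ge D_{2;q=1}|_{p=1}=(a-b)^2(1-b)^3\ge0$.

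This leaves only case (iv): $A>0$, $B<0$, $2A+B>0$. Here the minimum of $D_{2;q=1}$ over $p\in[0,1]$ is attained at the interior vertex $p_*:=-B/(2A)\in(0,1)$ and equals $C-B^2/(4A)$, so it suffices to prove the two-variable polynomial inequality $4AC-B^2\ge0$. By the remark above, in case (iv) the pair $(a,b)$ is confined to the \emph{bounded} semialgebraic region $U:=\{(a,b)\in\om_2\colon h>0,\ B<0,\ 2A+B>0\}$. To verify $4AC-B^2\ge0$ on $\overline U$ I would proceed exactly as in Lemmas~\ref{lem:0q1} and \ref{lem:1q1}: compute the resultant $R_b\big(\pd a(4AC-B^2),\,\pd b(4AC-B^2)\big)(a)$ to obtain a finite list of $a$-coordinates of the interior critical points, discard those outside $(1,2)$ by Sturm's theorem, evaluate $4AC-B^2$ by exact interval arithmetic at any surviving candidate, and finally check $4AC-B^2\ge0$ on each boundary piece of $\overline U$ (where one more variable is eliminated; note that on the piece $\{h=0\}\cap\overline U$ the constraints force $A=B=0$, so $4AC-B^2=0$ there); alternatively, the nonnegativity of $4AC-B^2$ on the defining polynomial inequalities of $U$ is directly certifiable by Redlog or Mathematica's \texttt{Reduce}, as in Lemma~\ref{lem:G'(0)>0}.

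The main obstacle is exactly this last step: although $U$ is bounded, $4AC-B^2$ is a polynomial of fairly high degree in $a$ and $b$, so the resultant computation and the interval-arithmetic checks at the candidate critical points are laborious and must be carried out in exact arithmetic. Everything else --- the case split on the signs of $A$, $B$, $2A+B$ and the reuse of Lemmas~\ref{lem:2p0} and \ref{lem:2p1} --- is routine.
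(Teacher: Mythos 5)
Your reduction is sound, and it is a genuinely different route from the one taken in the paper. Writing $D_{2;q=1}=Ap^2+Bp+C$ with $A=a^3h$, $h=3-3a+2ab-b^2$, your four-way sign analysis of $A$, $B$, $2A+B$ is exhaustive and each easy case is handled correctly; the endpoint values $C=D_{2;q=1}\big|_{p=0}$ and $A+B+C=(a-b)^2(1-b)^3$ are indeed nonnegative by the proofs of Lemmas~\ref{lem:2p0} and \ref{lem:2p1} (whose degree-one-in-$q$ arguments cover $q=1$); and the observation that $\pd b h=2(a-b)>0$ confines the remaining case to $1<a<2$, hence to a bounded region $U$, is correct, as is your remark that on $\overline U\cap\{h=0\}$ the constraints force $A=B=0$. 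The paper instead keeps all three variables: it compactifies $a\in[1,\infty)$ via $a\mapsto1/A$, locates the interior critical points of $D_{2;q=1}$ by iterated resultants (the only admissible one being $a_*=\tfrac15(7+3\times2^{-2/3}-2^{5/3})$, $b_*=1-2^{-1/3}$), and checks the boundary faces, reusing $D_{2;q=1}\big|_{a=1}=\tD_{1;q=1,a=1}\ge0$ from Lemma~\ref{lem:1q1}. Your route buys a drop to two variables on a bounded region; the price is that the discriminant $4AC-B^2$ has total degree about ten and the region $U$ has curved boundary pieces $\{B=0\}$, $\{2A+B=0\}$, $\{h=0\}$ rather than coordinate faces (though the first two are in fact trivial: on $B=0$ one has $4AC\ge0$ since $A,C\ge0$, and on $2A+B=0$ one has $4AC-B^2=4A(C-A)\ge0$ because $C-A=A+B+C\ge0$).

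The genuine gap is that the decisive step of your case (iv) --- the verification that $4AC-B^2\ge0$ on $U$ --- is only announced, not carried out. In this part of the paper the entire content of such a lemma is precisely the execution of that verification: exhibiting the resultants of the partial derivatives, isolating their real roots (Sturm), evaluating the polynomial at the surviving candidates in exact interval arithmetic, handling possible degeneracies, and disposing of each boundary piece; none of this is performed in your proposal, and the nontrivial edge $a=1$ (where the needed inequality is exactly the vertex-value statement for $\tD_{1;q=1,a=1}$, i.e.\ \eqref{eq:D_{1;q=1,a=1}>0}, which itself required the resultant-and-interval-arithmetic work of Lemma~\ref{lem:1q1}) is not identified as such. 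The fallback ``certify by \texttt{Reduce} or Redlog'' is not available here without collapsing the argument into the Section~\ref{proofs} proof of Lemma~\ref{lem:G'(0)>0}, which this second alternative proof is specifically meant to avoid. So as it stands you have a correct reduction plus a plausible programme, not a proof; completing it would require essentially the same volume of exact computation as the paper's own argument, albeit in one fewer variable.
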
 

\begin{proof}[Proof of Lemma~\ref{lem:2q1}]
Assume indeed in this proof that $(a,b)\in\om_2=(1,\infty)\times(0,1)$ and $p\in(0,1)$, unless otherwise indicated. 
In view of \eqref{eq:D_2}, 
\begin{multline*}
D_{2;q=1}=a^4 (2 b - 3) (p - 1) p - a^3 (b^2 - 3) (p - 1) p \\ 
+ 
 a^2 (3 b - 3 b^2 + b^3 + p - 6 b p + 6 b^2 p - 2 b^3 p) + 
 2 a (b - 1)^3 b - (b - 1)^3 b^2,   	
\end{multline*}
which is a polynomial in $a,b,p$ of degree $4$ in $a$. 

The correspondence 
\begin{equation}\label{eq:transf}
[1,\infty)\times[0,1]\times[0,1]=:\Th\ni(a,b,p)\longleftrightarrow 
(\tfrac1a,b,p)\in\hat\Th:=(0,1]\times[0,1]\times[0,1]	
\end{equation}
is bijective. It 
maps the unbounded set $[1,\infty)\times[0,1]\times[0,1]$ onto the 
set $(0,1]\times[0,1]\times[0,1]$, whose topological closure is of course the closed unit cube $[0,1]^3$, which is conveniently bounded. 
Moreover, 
\begin{equation*}
	\hat D_{2;q=1}:=A^4D_{2;q=1}\big|_{a=1/A}
\end{equation*}
is a polynomial in $A,b,p$ of degree $4$ in $A$, which is denoted by \tt{der2q1A} in the Mathematica notebook der2q1.nb.  
Clearly, $D_{2;q=1}\ge0$ for all $(a,b)\in\om_2$ and $p\in(0,1)$ iff $\hat D_{2;q=1}\ge0$ for all $(A,b,p)\in[0,1]^3$. 
However, the proof of the latter statement, that $\hat D_{2;q=1}$ for $(A,b,p)\in[0,1]^3$, is more convenient to conduct in the original variables $a,b,p$ and for the original polynomial $D_{2;q=1}$ -- except for the case when $A=0$ (informally corresponding to the ``case'' when $a\to\infty$), that is, except for the case when $(A,b,p)\in\{0\}\times[0,1]^2$. 

The latter, exceptional case, is in fact very easy. Indeed, $\hat D_{2;q=1}\big|_{A=0}=\break 
(3 - 2 b) (1- p) p\ge0$. 

Turning now away from the exceptional case, let $\inter E$ and $\bd E$ denote, respectively, the interior and the boundary in $\R^3$ of any set $E\subseteq\R^3$. Then, under the bijective correspondence \eqref{eq:transf}, $\inter\Th$ and $\bd\Th$ precisely correspond to $\inter\hat\Th$ and $(\bd\hat\Th)\setminus(\{0\}\times[0,1]^2)$, respectively. 
Thus, it remains to show that $D_{2;q=1}$ is nonnegative at all points 
$(a,b,p)\in\bd\Th$ and at all critical points $(a,b,p)\in\inter\Th$.   

If $D_{1;q=1}$ has a local extremum at 
some point $(a,b,p)\in\inter\Th=\om_1\times(0,1)$, then at this point   
\begin{align*}
\pd a	D_{2;q=1}=&a^2 p^2 (8 a b-12 a-3 b^2+9) \\ 
&-a p (8 a^2 b-12 a^2-3 a b^2+9 a+4 b^3-12 b^2+12 b-2) \\ 
&+2 b (a b^2-3 a b+3 a+b^3-3 b^2+3 b-1)=0, \\
\pd b	D_{2;q=1}=&2 a^3 p^2 (a-b)-2 a^2 p (a^2-a b+3 b^2-6 b+3) \\ 
&+(b-1)^2 (3 a^2+8 a b-2 a-5 b^2+2 b)=0, \\
\frac{\pd p	D_{2;q=1}}{a^2}=&1 - 3 a + 3 a^2 - 6 b - 2 a^2 b + 6 b^2 + a b^2 - 2 b^3 \\ 
&+ 
 2 a (3 - 3 a + 2 a b - b^2) p=0;  
\end{align*}
these three displayed expressions are denoted respectively by \tt{D100der2q1}, \tt{D010der2q1}, \tt{D001der2q1} in the Mathematica notebook der2q1.nb.
So (cf.\ \eqref{eq:R_n}),  
\begin{align*}
	\res^{101}:=&\frac{R_p(\pd a	D_{2;q=1},\frac1{a^2}\,\pd p	D_{2;q=1})(a,b)}{a^2} \\ 
	=&4 a^5 (3 - 2 b)^3 + 11 a^4 (2 b - 3)^2 (b^2 - 3) - 
 2 a^3 (2 b - 3) (5 b^4 - 30 b^2 - 4 b + 51) \\ 
 &+ 
 a^2 (35 b^6 - 192 b^5 + 429 b^4 - 552 b^3 + 417 b^2 - 24 b - 153) \\ 
 &- 
 4 a (b^2 - 3) (8 b^5 - 36 b^4 + 60 b^3 - 45 b^2 + 12 b + 3) \\ 
 &+ (b^2 - 
    3) (12 b^6 - 48 b^5 + 60 b^4 - 12 b^3 - 24 b^2 + 12 b + 1)=0, \\ 
	\res^{011}:=&\frac{R_p(\pd b	D_{2;q=1},\frac1{a^2}\,\pd p	D_{2;q=1})(a,b)}{2 a^2 } \\ 
	 =&- a^6 (2 b - 3)^2 + a^5 (2 b - 3) (4 b^2 - 3 b - 6) - 
 a^4 (b^2 - 3) (5 b^2 - 6 b - 3) \\ 
 &+ 
 a^3 (65 b^5 - 336 b^4 + 666 b^3 - 628 b^2 + 273 b - 36) \\ 
 &+ 
 a^2 (55 - 300 b + 312 b^2 + 400 b^3 - 930 b^4 + 588 b^5 - 124 b^6) \\ 
 &+ 
 a (64 b^7 - 228 b^6 + 48 b^5 + 698 b^4 - 984 b^3 + 420 b^2 - b - 
    18) \\ 
    &- 2 (b - 1)^2 b (5 b - 2) (b^2 - 3)^2=0, 
 \end{align*}
 \begin{align*} 
	\res:=&R_b(\res^{101},\res^{011})(a) \\
	&=-64 (a-1)^{15} a^2 (20 a^3-84 a^2+132 a-71)^4 \\ 
   &\times(177370200000 a^{17}-1259087910000 a^{16}+3655379856375
   a^{15} \\ 
   &-5140724114275 a^{14}+2772365717265 a^{13}-1047016242513
   a^{12} \\ 
   &+12034314857519 a^{11}-35530102580211 a^{10}+50669964404745
   a^9 \\ 
   &-45471398379553 a^8+30834049368285 a^7-17201730242769
   a^6 \\ 
   &+7858116810251 a^5-2832129730923 a^4+749567464029 a^3 \\ 
   &-130666203737
   a^2+12938625819 a-540087699)
	=0;   
\end{align*}
these three resultants are denoted respectively by \tt{res101p}, \tt{res011p}, \tt{res} in the Mathematica notebook der2q1.nb.
The only root $a\in(1,\infty)$ of the resultant $\res$ is the only root $a\in(1,\infty)$ of the equation $20 a^3-84 a^2+132 a-71=0$, which is $a=a_*:=\frac15\, (7 + 3\times 2^{-2/3} - 2^{5/3})$. 
Then the only root $b\in(0,1)$ of the equation $\res^{101}\big|_{a=a_*}=0$ is $b=b_*:=1 - 2^{-1/3}$. 
Next, for any $p\in(0,1)$ one has 
$D_{2;q=1}\big|_{a=a_*,b=b_*}=\frac1{200}\, (28 + 26\times2^{1/3} + 17\times2^{2/3})>0$. 
Thus, $D_{2;q=1}>0$ at all the possible critical points $(a,b,p)\in\inter\Th=\om_1\times(0,1)$.  

So, it remains to show that $D_{2;q=1}\ge0$ for all $(a,b,p)\in\bar\om_2\times[0,1]$ such that 
$p\in\{0,1\}$ or $(a,b)$ is on the boundary of $\om_2$; the latter, boundary condition on $(a,b)$ means that 
\begin{equation*}
	(a=1,0\le b\le1)\ \orl\ (b=0,a\ge1)\ \orl\ (b=1,a\ge1). 
\end{equation*}

Now indeed, 
\begin{equation*}
D_{2;q=1,p=0}=D_{2;q=1}\big|_{p=0}=b [(1 - b)^3 (b - 2 a) + a^2 (3 - 3 b + b^2)]	
\end{equation*}
is convex in $a$. At that, $D_{2;q=1,p=0}\big|_{a=1}=b(1 + 4 b - 8 b^2 + 5 b^3 - b^4)\ge0$ and 
$\pd a D_{2;q=1,p=0}\big|_{a=1}=2[2(1 - b^2) + b^3]\ge0$ for $b\in[0,1]$. 
So, $D_{2;q=1,p=0}\ge0$. 

Next, 
$
	D_{2;q=1,p=1}=D_{2;q=1}\big|_{p=1}=(a - b)^2 (1 - b)^3\ge0 
$. 

Further,  
\begin{equation*}
	D_{2;q=1,a=1}=D_{2;q=1}\big|_{a=1}=\tD_{1;q=1,a=1}\ge0,   
\end{equation*}
by \eqref{eq:tD1q1a1} and \eqref{eq:D_{1;q=1,a=1}>0}. 

Finally, 
$D_{2;q=1}\big|_{b=0}=a^2 p[1 + 3 a (a - 1) (1 - p)]\ge0$ and $D_{2;q=1}\big|_{b=1}=\break 
a^2 (1-p) [1 - p + p (a - 1)^2]\ge0$. 

This  
completes the proof of Lemma~\ref{lem:2q1}. 
\end{proof}

\begin{lemma}\label{lem:3p0}
$D_{3;p=0}\ge0$ for all $(a,b)\in\om_3$ and $q\in(0,1)$. 
\end{lemma}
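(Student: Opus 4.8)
The plan is to proceed exactly as in the simple cases already handled (Lemmas~\ref{lem:0p0}, \ref{lem:0q0}, \ref{lem:2p1}): substitute $p=0$ into the explicit expression \eqref{eq:D_3} for $D_3$, simplify, and exhibit a factorization that makes nonnegativity manifest on $\om_3$. Setting $p=0$ in \eqref{eq:D_3} kills all the terms carrying a factor of $p$, leaving only $D_{3;p=0}=a^2 b^2 q - 2 a b^3 q + b^4 q + 2 a b^3 q^2 - b^4 q^2$.

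Next I would group the $q$-linear part and the $q$-quadratic part separately: the terms $a^2 b^2 q - 2 a b^3 q + b^4 q$ assemble into $b^2 q\,(a-b)^2$, and the terms $2 a b^3 q^2 - b^4 q^2$ assemble into $b^3 q^2\,(2a-b)$. Pulling out the common factor $b^2 q$ then gives
\begin{equation*}
	D_{3;p=0}=b^2 q\,\big[(a-b)^2 + b q\,(2a-b)\big].
\end{equation*}

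Finally I would observe that on $\om_3=\{(a,b)\colon 1<b<a\}$ with $q\in(0,1)$ (indeed $q\in[0,1]$) every ingredient of the right-hand side is nonnegative: $b^2 q\ge0$ since $b>0$ and $q\ge0$; $(a-b)^2\ge0$ trivially; and $b q\,(2a-b)\ge0$ because $b q\ge0$ and $2a-b=a+(a-b)>0$ (both $a>0$ and $a-b>0$ hold on $\om_3$). Hence $D_{3;p=0}\ge0$, which is what is claimed.

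There is essentially no obstacle here; the only point requiring a word of justification is the sign of the last factor, $2a-b$, which is immediate from the defining inequalities $1<b<a$ of $\om_3$. The whole argument is of the same "manifestly nonnegative after factoring" type as Lemma~\ref{lem:0p0}, so I expect it to take only a couple of lines, with the factorization of $D_{3;p=0}$ being the single computational step; details of that simplification can be checked in the accompanying Mathematica notebook der3p0.nb (der3p0.pdf).
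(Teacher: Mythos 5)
Your proof is correct and follows essentially the same route as the paper: substitute $p=0$ into \eqref{eq:D_3}, pull out the factor $b^2q$, and observe manifest nonnegativity on $\om_3$. The only cosmetic difference is that the paper writes the bracket as the convex-combination-like form $q a^2+(1-q)(a-b)^2$, which avoids even the one-line check of the sign of $2a-b$, but your rearrangement $(a-b)^2+bq\,(2a-b)$ is algebraically the same quantity and your sign argument for it is valid.
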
 

\begin{proof}[Proof of Lemma~\ref{lem:3p0}] 
In view of \eqref{eq:D_3}, $D_{3;p=0}=b^2 q [q a^2 + (1 - q) (a - b)^2]$, which is manifestly nonnegative for all $(a,b)\in\om_3$ and $q\in[0,1]$.   
\end{proof}

\begin{lemma}\label{lem:3p1}
$D_{3;p=1}\ge0$ for all $(a,b)\in\om_3$ and $q\in(0,1)$. 
\end{lemma}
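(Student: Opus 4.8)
The plan is to substitute $p=1$ into the explicit expression \eqref{eq:D_3} for $D_3$ and simplify. At $p=1$ the terms $a^4p-a^4p^2$ and $-2a^3bp+2a^3bp^2$ cancel, so, after factoring out $b^2$, one is left with
\[
	D_{3;p=1}=b^2\,h(q),\qquad\text{where}\qquad h(q):=a^2-q\,(a^2+2ab-b^2)+q^2\,b\,(2a-b).
\]
I would then record that $(a,b)\in\om_3$ means $1<b<a$, so that $2a-b>a>0$ and hence the leading coefficient $b(2a-b)$ of the quadratic $h$ is positive; thus $h$ is convex in $q$.

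Next I would evaluate $h$ and $h'$ at the right endpoint. A short computation gives $h(1)=a^2-(a^2+2ab-b^2)+b(2a-b)=0$ and $h'(1)=-(a^2+2ab-b^2)+2b(2a-b)=-(a-b)^2\le0$. Since $h$ is convex, $h'$ is nondecreasing, so $h'(q)\le h'(1)\le0$ for all $q\le1$; hence $h$ is nonincreasing on $[0,1]$ and therefore $h(q)\ge h(1)=0$ for $q\in[0,1]$. Consequently $D_{3;p=1}=b^2h(q)\ge0$ for all $(a,b)\in\om_3$ and $q\in(0,1)$, which is the claim.

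An equivalent and perhaps cleaner presentation is to factor $h$ directly: $q=1$ is a root, and the product of the two roots of $h$ equals $a^2/[b(2a-b)]$, so the other root is $r:=a^2/[b(2a-b)]$, which satisfies $r\ge1$ because $a^2-b(2a-b)=(a-b)^2\ge0$. Then $h(q)=b(2a-b)(1-q)(r-q)>0$ for $q\in(0,1)$, all three factors being positive. Unlike the cases of $D_1$ and $D_2$, this lemma needs no resultants, case splitting, or computer algebra: the polynomial $D_{3;p=1}$ is the product of the square $b^2$ with a manifestly nonnegative quadratic in $q$. The only points to double-check are the cancellation of the $p$-terms, the identity $h'(1)=-(a-b)^2$, and the inequalities $2a-b>0$ and $r\ge1$ on $\om_3$; none of these presents any real difficulty.
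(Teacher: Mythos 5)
Your proposal is correct and follows essentially the same route as the paper: substitute $p=1$ into \eqref{eq:D_3}, pull out $b^2$, and check that the remaining quadratic in $q$ is nonnegative on $[0,1]$. The paper does this in one line by exhibiting the factorization $D_{3;p=1}=b^2(1-q)\left[(1-q)a^2+q(a-b)^2\right]$, which is exactly the factorization your root computation $h(q)=b(2a-b)(1-q)(r-q)$ recovers, so your convexity/monotonicity detour is sound but not needed.
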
 

\begin{proof}[Proof of Lemma~\ref{lem:3p1}] 
In view of \eqref{eq:D_3}, $D_{3;p=1}=b^2 (1-q) [(1 - q) a^2 + q (a - b)^2]$, which is manifestly nonnegative for all $(a,b)\in\om_3$ and $q\in[0,1]$.   
\end{proof}

\begin{lemma}\label{lem:3q0}
$D_{3;q=0}\ge0$ for all $(a,b)\in\om_3$ and $p\in(0,1)$. 
\end{lemma}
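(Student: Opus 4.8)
The plan is to compute $D_{3;q=0}$ directly from the explicit formula \eqref{eq:D_3} and exhibit it as a manifestly nonnegative expression, exactly as was done for $D_{3;p=0}$ and $D_{3;p=1}$ in Lemmas~\ref{lem:3p0} and \ref{lem:3p1}. Setting $q=0$ in \eqref{eq:D_3} annihilates the last three monomials, leaving
\[
	D_{3;q=0}=a^4p-2a^3bp+a^2b^2p-a^4p^2+2a^3bp^2 .
\]
First I would pull out the common factor $pa^2$, which gives $D_{3;q=0}=pa^2\big[(a-b)^2-pa(a-2b)\big]$, so that the whole problem reduces to showing that the bracketed quantity is nonnegative on $\om_3\times(0,1)$ (in fact on all of $\R^2\times[0,1]$).

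The key observation is that the bracket is affine in $p$ and interpolates between two perfect squares, namely
\[
	(a-b)^2-pa(a-2b)=(1-p)(a-b)^2+pb^2 ,
\]
an identity checked by expanding both sides. Hence $D_{3;q=0}=pa^2\big[(1-p)(a-b)^2+pb^2\big]$, a product of manifestly nonnegative factors for every $(a,b)\in\om_3$ and every $p\in[0,1]$, which completes the proof.

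There is essentially no obstacle here. The only mildly nonobvious step is spotting the grouping $(1-p)(a-b)^2+pb^2$ --- equivalently, noticing that $D_{3;q=0}/(pa^2)$, as a polynomial in $p$, equals $(a-b)^2$ at $p=0$ and $b^2$ at $p=1$ and is linear in between, hence is a convex combination of nonnegative quantities. None of the heavier machinery used for the harder lemmas of this section (monotonicity/convexity reductions, resultants, interval arithmetic) is required.
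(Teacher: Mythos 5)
Your computation is correct and coincides with the paper's own proof, which likewise writes $D_{3;q=0}=a^2 p\,[p\,b^2+(1-p)(a-b)^2]$ and observes that this is manifestly nonnegative. No further comment is needed.
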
 

\begin{proof}[Proof of Lemma~\ref{lem:3q0}] 
In view of \eqref{eq:D_3}, $D_{3;q=0}=a^2 p [p b^2 + (1 - p) (a - b)^2]$, which is manifestly nonnegative for all $(a,b)\in\om_3$ and $q\in[0,1]$.   
\end{proof}

\begin{lemma}\label{lem:3q1}
$D_{3;q=1}\ge0$ for all $(a,b)\in\om_3$ and $p\in(0,1)$. 
\end{lemma}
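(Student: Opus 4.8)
The plan is to follow exactly the strategy of the easy sibling Lemmas~\ref{lem:3p0}--\ref{lem:3q0}: substitute $q=1$ into the explicit expression \eqref{eq:D_3} for $D_3$, simplify, and check that the resulting polynomial splits into manifestly nonnegative factors. First I would set $q=1$ in \eqref{eq:D_3}; the groups of terms $-2ab^3q+2ab^3q^2$ and $b^4q-b^4q^2$ then cancel, and $a^2b^2q-2a^2b^2pq$ contributes $a^2b^2-2a^2b^2p$, so that $D_{3;q=1}$ becomes a polynomial in $a,b,p$ which is quadratic in $p$ with no $b^3$- or $b^4$-terms and with $p$-constant term $a^2b^2$.

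Next I would collect $D_{3;q=1}$ in powers of $p$ and factor. One notes that it vanishes identically at $p=1$, so $1-p$ divides it; pulling out also the common factor $a^2$ and the remaining linear factor in $p$, I expect to arrive at
\[
	D_{3;q=1}=a^2\,(1-p)\,\bigl[(1-p)\,b^2+p\,(a-b)^2\bigr].
\]
This identity is checked at once by re-expanding the right-hand side (its $p$-collected form is $a^2 b^2 + (a^4-2a^3b-a^2b^2)p + (-a^4+2a^3b)p^2$) and comparing coefficients with $D_{3;q=1}$ obtained from \eqref{eq:D_3}.

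Finally, on the domain of interest $(a,b)\in\om_3$ -- so that $a>b>1>0$, in particular $a>0$ -- and $p\in[0,1]$, each of the factors $a^2$, $1-p$, $(1-p)b^2$, and $p(a-b)^2$ is nonnegative; hence $D_{3;q=1}\ge0$. Together with Lemmas~\ref{lem:0p0}--\ref{lem:3q0} this settles all $16$ boundary subcases and, in combination with Lemma~\ref{lem:0<p,q<1}, completes the second alternative proof of Lemma~\ref{lem:G'(0)>0}.

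I do not anticipate any real obstacle: as in the other $k=3$ subcases, the only thing that can go wrong is an arithmetic slip in the cancellation at $q=1$ or in reading off the factorization, and that risk is eliminated by the direct re-expansion check described above. The one minor point worth flagging is that the lemma is stated for $p\in(0,1)$, but the factored form is nonnegative on the larger set $p\in[0,1]$, which is harmless (and in fact convenient, since the boundary values $p\in\{0,1\}$ are exactly what Lemma~\ref{lem:0<p,q<1} leaves to be checked).
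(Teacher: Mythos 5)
Your proposal is correct and matches the paper's proof: the paper likewise verifies the identity $D_{3;q=1}=a^2(1-p)\bigl[(1-p)b^2+p(a-b)^2\bigr]$ and concludes manifest nonnegativity on $\om_3\times[0,1]$. Your expansion check and the remark about the closed interval $p\in[0,1]$ are consistent with the paper's statement.
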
 

\begin{proof}[Proof of Lemma~\ref{lem:3q1}] 
In view of \eqref{eq:D_3}, $D_{3;q=1}=a^2 (1-p) [(1 - p) b^2 + p (a - b)^2]$, which is manifestly nonnegative for all $(a,b)\in\om_3$ and $p\in[0,1]$.   
\end{proof}

Now Lemma~\ref{lem:G'(0)>0} follows immediately from Lemmas~\ref{lem:0<p,q<1}--\ref{lem:3q1}. 


\bibliographystyle{acm}
\bibliography{C:/Users/ipinelis/Dropbox/mtu/bib_files/citations12.13.12}

\end{document}